\title[Conductivity for 2D Incommensurate Bilayers]{Modeling and Computation of Kubo Conductivity for 2D Incommensurate Bilayers}
\author{Simon Etter, Daniel Massatt, Mitchell Luskin, Christoph Ortner}
\date{\today}
\thanks{ML and DM were supported in
  part by ARO MURI Award W911NF-14-1-0247.  CO was supported by ERC Starting Grant
  335120 and Leverhulme Research Project Grant RPG-2017-191. SE and CO acknowledge support for visits to the Institute for Mathematics and
  Its Applications.}
\numberwithin{equation}{section}
\newtheorem{remark}{Remark}
\newtheorem{theorem}{Theorem}
\newtheorem{lemma}{Lemma}
\newtheorem{definition}{Definition}
\newtheorem{corollary}{Corollary}
\newtheorem{assumption}{Assumption}
\newtheorem{example}{Example}
\numberwithin{definition}{section}
\numberwithin{theorem}{section}
\numberwithin{remark}{section}
\numberwithin{prop}{section}
\numberwithin{corollary}{section}
\numberwithin{lemma}{section}
\numberwithin{assumption}{section}
\numberwithin{example}{section}
\numberwithin{conjecture}{section}
\def\mod{{\rm mod}}
\newcommand{\opR}{\mathbf{R}}
\newcommand{\param}{\mathcal{P}}
\newcommand{\parelem}{\zeta}
\newcommand{\Tr}{{\rm Tr}}
\newcommand{\per}{\text{per}}
\newcommand{\C}{\mathcal{C}}
\DeclareMathOperator{\real}{Re}
\DeclareMathOperator{\imag}{Im}
\DeclareMathOperator*{\argmin}{arg\,min}
\newcommand{\acos}{\text{acos}}
\newcommand{\R}{\mathcal{R}}
\newcommand{\A}{\mathcal{A}}
\def\mod{{\rm mod}}
\newcommand{\Real}{\text{Re}}
\newcommand{\ccm}{\bar{\mu}}
\newcommand{\bsigma}{\bar{\sigma}}
\def\eps{\varepsilon}
\begin{document}

\begin{abstract}

This paper presents a unified approach to the modeling and computation of the Kubo conductivity of incommensurate bilayer heterostructures at finite temperature.
Firstly, we derive an expression for the large-body limit of Kubo-Greenwood conductivity in terms of an integral of the conductivity function with respect to a current-current correlation measure. We then observe that the incommensurate structure can be exploited to decompose the current-current correlation measure into local contributions and deduce an approximation scheme which is exponentially convergent in terms of domain size.

Secondly, we analyze the cost of computing local conductivities via Chebyshev approximation. Our main finding is that if the inverse temperature $\beta$ is sufficiently small compared to the inverse relaxation time $\eta$, namely $\beta \lesssim \eta^{-1/2}$, then the dominant computational cost is $\mathcal{O}\bigl(\eta^{-3/2}\bigr)$ inner products for a suitably truncated Chebyshev series, which  significantly improves on the $\mathcal{O}\bigl(\eta^{-2}\bigr)$ inner products required by a naive Chebyshev approximation.

Thirdly, we propose a rational approximation scheme for the low temperature regime $\eta^{-1/2} \lesssim \beta$, where the cost of the polynomial method
increases up to $\mathcal{O}\bigl(\beta^2\bigr),$ but the rational scheme scales
much more mildly with respect to $\beta$.

\end{abstract}

\maketitle

% !TEX root = Conductivity_Bilayer.tex

\section{Introduction}
Periodic bilayer 2D heterostructures are typically studied using Bloch Theory
\cite{Kaxiras_2003}. This technique breaks down in the case of {\em
incommensurate} heterostructures, where the ensemble is not periodic, though
each individual sheet may maintain its own periodicity. Previous work introduced
a configuration space representation of incommensurate materials, where
incommensurate systems are classified by local configurations \cite{massatt2017,
carr2017, cances2016}, motivated by concepts introduced in
\cite{Bellissard2002,prodan2012}. The configuration space approach proved to be
useful for numerical simulation of the density of states \cite{carr2017}. In the
present paper, we consider conductivity, which proves to be significantly more
challenging to compute numerically, especially in the low temperature and long
dissipation time regime.
We shall restrict ourselves to the tight-binding model, which has the advantage of being designed for large systems while maintaining accurate quantum information.

Our first main result will be to prove that the Kubo conductivity is well defined in
the thermodynamic limit, as was done for the density of states in
\cite{massatt2017}, and has a similar formulation in terms of configuration
space integrals.  For each local configuration, we compute a local conductivity
using the classical current-current correlation formulation~\cite{Kaxiras_2003}
and then integrate over a compact parametrization of all local configurations.
Specifically, in Theorem~\ref{thm:main}, we obtain an exponential rate of
convergence of the averaged local conductivities to the thermodynamic limit.
Related results have also been obtained within the
framework of C$^*$ algebras~\cite{cances2016} and for a disordered
lattice gas~\cite{prodan2012}, whereas our approach uses the direct matrix
framework developed in \cite{massatt2017}.

Our second main result will be the cost analysis of a linear-scaling conductivity algorithm based on Chebyshev approximation, which is the direct analogue of the Fermi Operator Expansion (FOE) for the density matrix \cite{GC94,GT95} and the Kernel Polynomial Method (KPM) for the density of states \cite{massatt2017,kernel_poly}. Both of these methods expand their respective quantity of interest $q$ in terms of some functional $f(A)$ of the Chebyshev polynomials $T_k(E)$ {applied to} the Hamiltonian matrix $H$,
\[
q = \sum_{k = 0}^\infty c_k \, f\bigl(T_k(H)\bigr)
,
\]
and then truncate this series to a finite set of indices $K = \{0, \ldots, k_\mathrm{max}\}$ for numerical evaluation. This truncation is justified since it can be shown in both cases that the contributions from large matrix powers $k$ decay exponentially.

Unlike the density matrix and the density of states, the conductivity $\sigma$ requires an expansion in terms of pairs of Chebyshev polynomials,
\begin{equation}\label{eqn:conductivity_expansion}
\sigma = \sum_{k_1,k_2 = 0}^\infty c_{k_1,k_2} \, f\bigl(T_{k_1}(H), T_{k_2}(H)\bigr)
,
\end{equation}
and this introduces two new features.
On the one hand, it shifts the main computational burden from evaluating the matrix polynomials $T_k(H)$ to evaluating the functional $f(A,B)$ since the FLOP counts for both operations scale linearly in the size of the Hamiltonian but the two-dimensional nature of the expansion in \eqref{eqn:conductivity_expansion} implies that the number of $f(A,B)$ to evaluate is asymptotically larger than the corresponding number of $T_k(H)$.
On the other hand, \eqref{eqn:conductivity_expansion}
allows for more complex decay behavior of the expansion coefficients $c_{k_1k_2}$ and hence necessitates a more careful analysis of how to choose the truncation indices $K \subset \mathbb{N}^2$.

Indeed, we will see in Section \ref{sec:numerics} that the shape of the large terms in \eqref{eqn:conductivity_expansion} depends heavily on two physical parameters, namely the inverse temperature $\beta$ and the inverse relaxation time $\eta$, and changes from ``wedge along the diagonal'' for $\beta \lesssim \eta^{-1/2}$ to ``equilateral triangle'' for $\beta \gtrsim \eta^{-1}$, see Figure \ref{fig:coeffs},
and the number of significant {terms} changes correspondingly from $\mathcal{O}\bigl(\eta^{-3/2}\bigr)$ for $\beta \lesssim \eta^{-1/2}$ to $\mathcal{O}\bigl(\beta^2\bigr)$ for $\beta \gtrsim \eta^{-1}$, see Table \ref{tbl:Chebyshev_analysis}.
In the case $\beta \gtrsim \eta^{-1}$, we will further see that the number of significant terms can be reduced even further by using a rational approximation instead of \eqref{eqn:conductivity_expansion}.
Since $\beta$ is inversely proportional to the temperature while $\eta$ depends mostly on the material properties \cite{ashcroft2011solid}, the same material at different temperatures can lead to a widely varying relationship between $\beta$ and $\eta$.

An expansion analogous to \eqref{eqn:conductivity_expansion} has previously been considered in \cite{Wan94} for computing optical-absorption spectra.
The main novelty of our work compared to \cite{Wan94} is that we analyze the decay of the terms in \eqref{eqn:conductivity_expansion} and use an adaptive index set $K \subset \mathbb{N}^2$ for truncating this series, while \cite{Wan94} considers only $K = \{0, \ldots, k_\mathrm{max}\}^2$.

\subsection{Notation}

\begin{itemize}
\item We denote the $\ell^2$ norm, the operator norm, and the Frobenius norm over discrete space as $\|\cdot\|_{\ell^2},
 \|\cdot\|_{\rm op}$, $\|\cdot\|_{\rm F}$.
 The supremum norm of a function $f : X \to Y$ on a domain $\Omega \subset X$ is denoted by $\|f\|_\Omega$.
\item $B_r = \{ x \in \mathbb{R}^2 \text{ : } |x| < r\}.$
\item For vectors $v,w \in \mathbb{C}^N$ and $A \in \mathbb{C}^{N\times N}$, we have $\langle v|w \rangle = \sum_{i=1}^N v_i^*w_i$ and $\langle v|A|w\rangle = \sum_{i,j = 1}^N A_{ij}v_i^*w_j.$
\item  $\mathcal{L}(\ell^2(\Omega))$ are the bounded operators from $\ell^2(\Omega)$ to itself.
\item We write ``$f(x) = \mathcal{O}\bigl(g(x)\bigr)$ for $x \to x_0$'' if $\limsup_{x \to x_0} \frac{|f(x)|}{|g(x)|} < \infty$,
and ``$f(x) = \Theta\bigl(g(x)\bigr)$ for $x \to x_0$'' if $\limsup_{x \to x_0} \frac{f(x)}{g(x)} < \infty$ and $\liminf_{x \to x_0} \frac{f(x)}{g(x)} > 0$.
We note that unlike $\mathcal{O}\bigl(g(x)\bigr)$, $\Theta\bigl(g(x)\bigr)$ is signed, i.e.\ $\Theta\bigl(g(x)\bigr) \neq \Theta\bigl(-g(x)\bigr)$.
\end{itemize}

% !TEX root = Conductivity_Bilayer.tex

\section{Conductivity in Incommensurate Bilayers}
\label{sec:main}

\subsection{Incommensurate bilayer}
Informally, an {\em incommensurate bilayer} is a union of two infinite sheets of
material, which are individually periodic, but when joined together become
aperiodic (see Fig.~\ref{fig:moire} for an example). To formalize this concept,
let
\begin{equation*}
   \R_\ell := \{ A_\ell m \text{ : } m \in \mathbb{Z}^2\},
\end{equation*}
with non-singular $A_\ell \in \mathbb{R}^{2 \times 2}$, be two Bravais lattices
defining the periodicity of the two sheets indexed by $\ell \in \{1,2\}$.
For future reference, let $\tau(1) = 2, \tau(2) = 1$ denote the transposition operator,
and let
\begin{equation*}
\Gamma_\ell = \{A_\ell \beta \text{ : } \beta \in [0,1)^2\}
\end{equation*}
denote the unit cell for $\R_\ell$. In terms of the reciprocal lattices
\begin{equation*}
   \R_\ell^* := \{ 2\pi A_\ell^{-T} n \text{ : } n \in \mathbb{Z}^2\},
\end{equation*}
we can state the assumption of incommensurability  as follows:

\begin{assumption}
   \label{assump:incom}
   The bilayer $\R_1 \cup \R_2$ is {\em incommensurate}, that is,
   \begin{equation*}
   v + \R_1^*\cup\R_2^* = \R_1^*\cup \R_2^*
   \quad \Leftrightarrow  \quad
   v = (0,0).
   \end{equation*}
\end{assumption}

\begin{figure}
   \includegraphics[width=.7\textwidth]{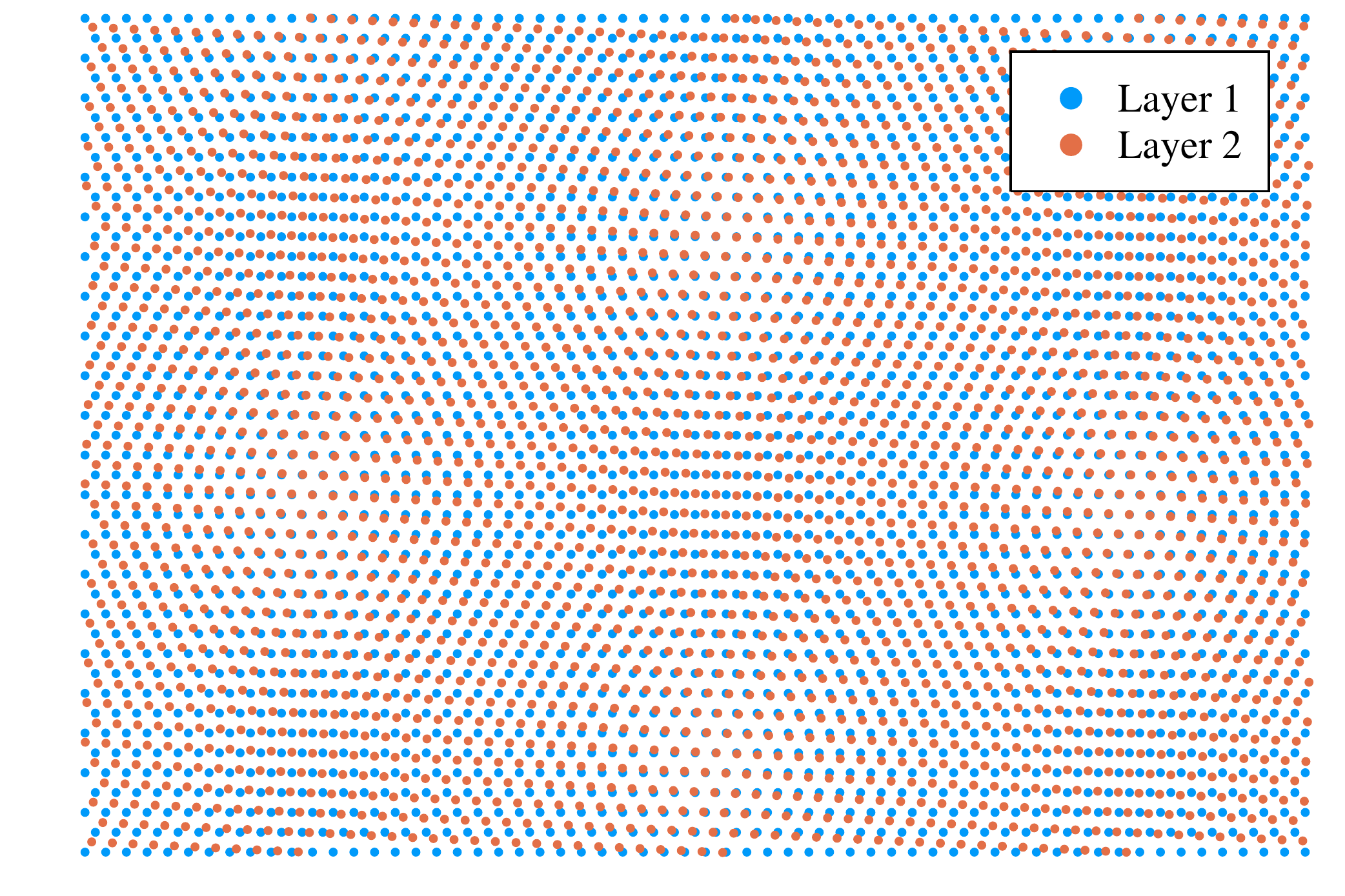}
   \caption{Hexagonal bilayer lattices with a $2.5^\circ$ relative twist.} \label{fig:moire}
\end{figure}

As shown in \cite{massatt2017, cances2016, Zhou2009}, incommensurability leads to a form of
ergodicity that allows us to replace sampling over bilayer sites with sampling
over bilayer shifts or disregistry (henceforth called {\em configurations}; cf. Remark
\ref{rem:configurations}).

\begin{lemma}
\label{thm:ergodic}
   Let $\R_1$ and $\R_2$ satisfy Assumption \ref{assump:incom}, and
   $g \in C_{\rm per}(\Gamma_{\tau(\ell)})$, then
   \begin{equation*}
   \lim_{r \to \infty} \frac{1}{\# \R_\ell \cap B_r} \sum_{R_\ell \in \R_\ell \cap B_r} g(R_\ell) = \frac{1}{|\Gamma_{\tau (\ell)}|} \int_{\Gamma_{\tau (\ell)}} g(b)db,
   \end{equation*}
   where $B_r = \{ x \in \mathbb{R}^2 \text{ : } |x| \leq r\}.$
\end{lemma}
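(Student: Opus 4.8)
\emph{Proof proposal.} The plan is to reduce the statement to a Weyl-type equidistribution estimate for exponential sums over the lattice $\R_\ell$, and to use incommensurability precisely to eliminate the resonant frequencies. As a preliminary, I would record the elementary counting fact $\#(\R_\ell \cap B_r) = \frac{\pi}{|\det A_\ell|}\, r^2 + \mathcal{O}(r) = \Theta(r^2)$, obtained by comparing the number of lattice points in $B_r$ with its area up to a boundary layer of width $\mathcal{O}(1)$. Then, since the trigonometric polynomials $p(b) = \sum_{G \in F} \hat p_G\, e^{\mathrm{i} G\cdot b}$ with $F \subset \R_{\tau(\ell)}^*$ finite are dense in $C_{\rm per}(\Gamma_{\tau(\ell)})$ for $\|\cdot\|_{\Gamma_{\tau(\ell)}}$ (Stone--Weierstrass, or Fejér's theorem), and since all averages in the statement, as well as $\frac{1}{|\Gamma_{\tau(\ell)}|}\int_{\Gamma_{\tau(\ell)}} g$, are bounded by $\|g\|_{\Gamma_{\tau(\ell)}}$ uniformly in $r$, a standard three-$\varepsilon$ argument reduces the claim to the single-character case $g(b) = e^{\mathrm{i} G\cdot b}$, $G \in \R_{\tau(\ell)}^*$. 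For $G = 0$ both sides equal $1$, so everything comes down to showing
\[
\frac{1}{\#(\R_\ell \cap B_r)} \sum_{R_\ell \in \R_\ell \cap B_r} e^{\mathrm{i} G\cdot R_\ell} \longrightarrow 0 \qquad (r \to \infty)
\]
for every $G \in \R_{\tau(\ell)}^* \setminus \{0\}$.

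The crucial observation is that such a $G$ cannot lie in $\R_\ell^*$: if $G \in \R_\ell^* \cap \R_{\tau(\ell)}^*$ with $G \neq 0$, then, since both reciprocal lattices are groups, $G + (\R_1^* \cup \R_2^*) = \R_1^* \cup \R_2^*$, contradicting Assumption~\ref{assump:incom}. Writing $R_\ell = A_\ell m$ with $m \in \mathbb{Z}^2$ and setting $\phi := A_\ell^T G$, the statement $G \notin \R_\ell^*$ is exactly $\phi \notin 2\pi\mathbb{Z}^2$, so some component $\phi_j \notin 2\pi\mathbb{Z}$. I would then bound the exponential sum by slicing: for each fixed value of the complementary coordinate $m_{j'}$, the admissible $m_j$ (those with $A_\ell m \in B_r$) form a block of consecutive integers, because $A_\ell^{-1}B_r$ is a convex ellipse and its intersection with a line is a segment; hence the inner geometric sum $\sum_{m_j} e^{\mathrm{i}\phi_j m_j}$ is bounded by $|\sin(\phi_j/2)|^{-1}$, independently of $m_{j'}$ and $r$. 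Only $\mathcal{O}(r)$ values of $m_{j'}$ yield a nonempty slice, so the full sum is $\mathcal{O}\bigl(r/|\sin(\phi_j/2)|\bigr)$; dividing by $\#(\R_\ell\cap B_r) = \Theta(r^2)$ gives the claimed decay, and the three-$\varepsilon$ reduction then yields the lemma for general $g$.

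The main obstacle is this last step, namely controlling the exponential sum over a ball rather than over an axis-aligned box. The slicing argument resolves it cleanly once one notes that the only way it can fail — $\phi \in 2\pi\mathbb{Z}^2$, i.e.\ $G \in \R_\ell^*$ — is precisely the resonance forbidden by incommensurability; what remains is careful bookkeeping of the boundary layer in the counting estimate and of the uniformity in $r$ required by the density argument. Alternatively one could simply quote a general equidistribution theorem for lattice points in dilating convex bodies, but the self-contained slicing estimate above is short enough to present in full.
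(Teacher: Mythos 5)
Your proof is correct: the reduction via Stone--Weierstrass to characters $e^{\mathrm{i}G\cdot b}$, the observation that Assumption~\ref{assump:incom} forces $\R_1^*\cap\R_2^*=\{0\}$ so that $G\in\R_{\tau(\ell)}^*\setminus\{0\}$ gives $A_\ell^T G\notin 2\pi\mathbb{Z}^2$, and the slicing/geometric-sum bound $\mathcal{O}(r)$ against the $\Theta(r^2)$ normalization all hold up. Note that the paper itself does not prove Lemma~\ref{thm:ergodic} but quotes it from \cite{massatt2017,cances2016,Zhou2009}, where the argument is essentially the same Weyl-equidistribution reasoning you give, so your proposal matches the intended proof.
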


Lemma \ref{thm:ergodic} is the basis of an efficient algorithm for computing
the density of states in incommensurate bilayers \cite{massatt2017}. In the
present work, it plays a similar role in the computation of transport properties.

\begin{remark} \label{rem:configurations}
   The relative shift $b$ between the layers parameterizes the local environment of sites uniquely.  For example, if we let $R \in \R_1$, we have
   \begin{equation*}
   \R_1 \cup \R_2 + R = \R_1 \cup (\R_2 + R) = \R_1 \cup (\R_2 + \mod_2(R)),
   \end{equation*}
   where $\mod_2(R) = R + R' \in \Gamma_2$ for an appropriately chosen $R' \in \R_2$.
   The shift $b = \mod_2(R)$ therefore selects the new environment of
   site $R$, $\R_1 \cup (\R_2 + \mod_2(R))$.

   As a consequence of this observation, we will from now on refer to the shift
   $b$ as a {\em configuration}, and the space of configurations $(\Gamma_1,
   \hspace{1mm}\Gamma_2)$ as {\em configuration space}.
\end{remark}

\subsection{Tight-binding model}
The tight-binding model \cite{Kaxiras_2003} is an electronic structure model,
that has been successfully employed in the modeling of two-dimensional
heterostructures \cite{shiang2015, shiang2016, carr2017 }. For the purpose of the present
work, it will be sufficient to formulate it at an abstract and slightly
simplified level.

Let $\A_\ell$ denote the index set of atomic orbitals for each lattice site of
sheet $\ell$, then the degree of freedom space for the entire bilayer is given
by
\begin{equation}
   \Omega = (\R_1 \times \A_1) \cup (\R_2 \times \A_2).
\end{equation}
(Note that the orbital set $\A_\ell$ also accounts for multi-lattice structures
in the configuration of atomic nuclei.) The tight-binding model is described by
an operator (or, more intuitively, an infinite matrix) $H  \in \mathcal{L}(\ell^2(\Omega))$,
\begin{equation}\label{e:hamiltonian}
H_{R\alpha,R'\alpha'} = h_{\alpha\alpha'}(R-R').
\end{equation}

\begin{assumption}
   \label{assump:decay}
   We assume $h_{\alpha\alpha'} \in C^n(\mathbb{R}^2)$ for some $n > 0$, and is
   exponentially localized for $R= (R_1,R_2) \in \mathbb{R}^2$:
   \begin{equation}  \label{eq:decay_h}
         \begin{split}
   &|h_{\alpha\alpha'}(R)| \lesssim e^{-\gamma_0 |R|}, \\
   &|\partial_{{R_1}}^{m'}\partial_{{R_2}}^{m} h_{\alpha\alpha'}(R)| \lesssim e^{-\gamma_{m'm} |R|},
   \end{split}\end{equation}
   for $\gamma_{m'm} > 0$ and $\gamma_0 > 0$, $m+m' \leq n.$  Further, we assume
   \begin{equation*}
   h_{\alpha\alpha'}(R) = \overline{h_{\alpha'\alpha}(-R)}.
   \end{equation*}
\end{assumption}
Note that $H$ is Hermitian.
{In tight-binding models, the interlayer coupling functions $h$ are smooth \cite{shiang2015, shiang2016} as they are constructed from the coupling between smooth Wannier orbitals.}
Since the infinite-dimensional electronic structure problem (diagonalizing $H$)
cannot be solved directly, we first consider a projection to a finite subset of
the degree of freedom space
\begin{equation}\label{e:Omega_r}
   \Omega_r = \biggl[\bigl[\R_1 \cap B_r\bigl]\times \mathcal{A}_1
           \biggr] \, \cup \, \biggl[\bigl[\R_2 \cap B_r\bigl]\times \mathcal{A}_2\biggr],
            \qquad \text{ for } r > 0.
\end{equation}
Let the projected Hamiltonian be the matrix $H^r = H|_{\Omega_r}$, then we can
solve the corresponding eigenvalue problem
\begin{equation} \label{eq:eval_problem_Hr}
   H^r v_i = \eps_i v_i,
\end{equation}
with $\|v_i\|_{\ell^2} = 1$. A wide range of physical quantities of interest
can be inferred from the eigenpairs $(\eps_i, v_i)$, including electronic
conductivity which we discuss next.

Under Assumption \ref{assump:decay}, the spectrum of $H^r$ is uniformly bounded
as $r \to \infty$. Upon shifting and rescaling the Hamiltonian, we may therefore
assume, without loss of generality, that $\|H\|_{\rm op} < 1$.

\subsection{Current-current correlation measure}
The conductivity tensor will be defined in terms of the {\em current-current
correlation measure}. To introduce it, let $p \in \{1,\,2\}$, and $A \in
\mathbb{R}^{\Omega_r \times \Omega_r}$ be a Hamiltonian. Then the {\em
velocity operator} $\partial_p A \in {\mathbb{C}}^{\Omega_r \times \Omega_r}$
is given by
\begin{equation}
   [\partial_p A]_{R\alpha,R'\alpha'} = i(R'-R)_p A_{R\alpha,R'\alpha'}, \qquad R\alpha,R'\alpha' \in \Omega_r.
\end{equation}
Equivalently, we can define $\partial_p A$ in terms of a commutator, $\partial_p A
= i[A,\opR_p] = i(A \opR_p - \opR_p A )$, where $\opR_p$ is understood as a diagonal matrix
\begin{equation*}
[\opR_p]_{R\alpha,R'\alpha'} = \delta_{\alpha\alpha'}\delta_{RR'} R_p.
\end{equation*}

The matrix-valued current-current correlation measure $\bar{\mu}^r$ on the
finite system $\Omega_r$, is defined by \cite{Combes2010-ec}
\begin{equation} \label{eq:def_currcurrcormeasure_Omegar}
\begin{split}
& \int_{\mathbb{R}^2}\phi(E_1, E_2) d\bar{\mu}^r(E_1,E_2) \\
&\qquad =
\bigg[  \frac{1}{|\Omega_r|} \sum_{i,i'} \phi(\eps_{i}, \eps_{i'}) \, \Tr \Big[ | v_{i} \rangle\langle v_{i} | \partial_{p} H^r | v_{i'} \rangle \, \langle v_{i'} | \partial_{p'} H^r | \Big] \bigg]_{p,p' = 1,2}
\end{split}
\end{equation}
where $(\eps_i, v_i)$ denote the eigenpairs of the Hamiltonian $H^r$, and
$E_1, E_2$ are integration variables. (In particular, the indices  in $E_1, E_2$
are unrelated to the indices of the layers.)

We note that \eqref{eq:def_currcurrcormeasure_Omegar}  is the current-current correlation measure since the current operator $i[\opR_p,A]$
is the negative of the velocity operator $\partial_p A=i[A,\opR_p].$
For the sake
of simplicity of notation, we will henceforth simply drop the brackets
$[\bullet]_{p,p'}$ on the right-hand side of
\eqref{eq:def_currcurrcormeasure_Omegar}.
In numerical computations, we will approximate general functions $\phi(E_1,E_2)$  by sums of products of univariate functions
\begin{equation*}
\phi(E_1,E_2) \approx \tilde{\phi}(E_1,E_2) := \sum_{(k_1, k_2) \in K} \phi_{k_1}(E_1)\phi_{k_2}(E_2),
\end{equation*}
where $K$ is a finite index-set.
In this case, we can rewrite \eqref{eq:def_currcurrcormeasure_Omegar}
(with $\phi$ replaced with $\tilde{\phi}$) as
\begin{equation} \label{eq:currcurr_tensor}
{\int_{\mathbb{R}^2} \tilde\phi(E_1,E_2) \, d\bar{\mu}^r(E_1,E_2) =  \biggl[\frac{1}{|\Omega_r|} \sum_{(k_1, k_2) \in K} \, \Tr \bigl[\phi_{k_1}(H_r)  \partial_{p} H^r  \phi_{k_2}(H_r)  \partial_{p'} H^r \bigr]\biggr]_{p,p' = 1,2}.}
\end{equation}

For brevity we collect the set of conductivity parameters $\parelem = (\beta, \eta,
\omega, E_F) \in \param = \mathbb{R}_+^2 \times \mathbb{R}^2.$ The conductivity
tensor for the finite system $\Omega_r$ can now be defined by
{
\begin{equation} \label{eq:defn_sigma_r}
   \bsigma^r =  \int_{\mathbb{R}^2} F_{\parelem}(E_1,E_2)d\ccm^r(E_1,E_2),
\end{equation}
for the {\em conductivity function} $F_{\parelem}$ defined as
\begin{equation}\label{e:conductivity_function}
   F_{\parelem}(E_1,E_2) = \frac{f_\beta(E_1-E_F) - f_\beta(E_2-E_F)}{(E_1-E_2)(E_1-E_2 + \omega + \iota \eta)}.
\end{equation}
where $\omega$ is proportional to photon frequency, $\eta$ is proportional to inverse relaxation time, $E_F$ is the Fermi-level of the
system, and $f_\beta(E) = (1+ e^{\beta (E-E_F)})^{-1}$ is the Fermi-Dirac
distribution.
Here we have rescaled $\eta$, $\beta$, and all energies to be unitless, and the conductivity is missing a physical constant prefactor.
We note that for a finite system, this is not a true conductivity.  Conductivity is defined only in the infinite system, and hence for the finite system this is an approximate conductivity, which we analyze in this text.

Our aim throughout the remainder of Section \ref{sec:main} is to show that the
thermodynamic limit $\sigma := \lim_{r\rightarrow\infty} \bsigma^r$ exists and
to establish a configuration space representation with an exponential
convergence rate.
}
\begin{remark}
   The formulation \eqref{eq:defn_sigma_r} is consistent with the formulation
   for periodic systems \cite{Kaxiras_2003} and with the C$^*$ algebra
   formulation of a generalized Kubo formula for incommensurate bilayers
   \cite{cances2016}. We will obtain a definition through a thermodynamic limit
   argument using a direct matrix formulation, thus giving this formulation
   additional justification. Here we focus on the thermodynamic limit taken as a
   sequence of circular domains, though we observe that this could be  extended
   to a more general class of limit sequences. In particular, as long as the
   sequence does not generate a proportionally imbalanced boundary relative to
   bulk, the sequence will converge to the same limit. We restrict ourselves to the
   circular domain limit to avoid distraction from the key points of this paper.
\end{remark}

Implicitly, $\bsigma^r$ and later $\sigma$ depend on the model parameters $\parelem =
(\beta, \eta, \omega, E_F) $, but for the sake of brevity of notation, this
dependence is suppressed. However, we emphasize that for a quantitative
convergence analysis the parameters $\beta, \eta$ are in fact crucial since they
characterize the region of analyticity of the conductivity function $F_{\parelem}$.

\subsection{Local current-current correlation measure}
In order to pass to the limit as $r \to \infty$, we follow the ideas in
\cite{massatt2017} and define a local (or, projected) conductivity, which
will later take the role of $g$ in Lemma \ref{thm:ergodic}. To motivate, we first observe that the expression in \eqref{eq:def_currcurrcormeasure_Omegar} can be written as
\begin{equation*}
\begin{split}
\int_{\mathbb{R}^2}\phi(E_1,E_2) \, d\bar{\mu}^r(E_1,E_2) &=
  \frac{1}{|\Omega_r|} \sum_{i,i' } \phi(\eps_{i}, \eps_{i'}) \Tr \bigl[| v_{i} \rangle\langle v_{i} | \partial_{p} H^r | v_{i'} \rangle \, \langle v_{i'} | \partial_{p'} H^r |\bigr] \\
&\hspace{-2cm}=  \frac{1}{|\Omega_r|}\sum_{R\alpha \in \Omega_r}\biggl[\sum_{i,i' }\phi(\eps_{i}, \eps_{i'}) \langle e_{R\alpha}| v_{i} \rangle\langle v_{i} | \partial_{p} H^r | v_{i'} \rangle \, \langle v_{i'} | \partial_{p'} H^r |e_{R\alpha} \rangle\biggr].
\end{split}
\end{equation*}
Here we have defined $e_{R\alpha} \in \ell^2(\Omega_r)$ via
\begin{equation*}
[e_{R\alpha}]_{R'\alpha'} = \delta_{\alpha\alpha'}\delta_{RR'}, \hspace{3mm} R'\alpha' \in \Omega_r,
\end{equation*}
and $(\eps_i,v_i)$ are the eigenpairs of $H^r$. We see that the trace is decomposed
into projections onto diagonal elements. We further observe that the left-most
sum, normalized by $\frac{1}{|\Omega_r|}$, looks remarkably similar to a
discretized integral. The crucial step then is how to realize the thermodynamic
limit as an integral. We will formalize this with the help of Lemma
\ref{thm:ergodic}, which will convert this expression into an integral over
configuration space. To that end, we define the Hamiltonian for a shifted
configuration,
\begin{equation}
[H_{\ell}(b)]_{R\alpha,R'\alpha'} = h_{\alpha\alpha'}\bigl(b(\delta_{\alpha \in \mathcal{A}_{\tau(\ell)}} - \delta_{\alpha' \in \mathcal{A}_{\tau(\ell)}})+R-R'\bigr), \quad R\alpha,R'\alpha' \in \Omega.
\end{equation}
Likewise, we have $H_\ell^r(b) = H_\ell(b)|_{\Omega_r}.$ Since $H^r_{\ell}(b)$
is {Hermitian}, we can define the local current-current correlation measure
$\mu_\ell^r[b]$ for a finite system $\Omega_r$, at configuration $b$, in layer
$\ell$, via
\begin{equation}
\label{e:finite}
\int_{\mathbb{R}^2}\phi(E_1,E_2) \, d\mu^r_\ell[b] =  \sum_{\substack{i,i' \\ \alpha \in \A_\ell}} \phi(\eps_{i}, \eps_{i'}) \, \langle e_{0 \alpha} | v_i \rangle \langle v_i | \partial_p H^r_{\ell}(b) | v_{i'} \rangle \, \langle v_{i'} | \partial_{p'} H^r_{\ell}(b) | e_{0 \alpha} \rangle,
\end{equation}
where $(\eps_i,v_i)$ are the eigenpairs of $H^r_{\ell}(b)$ (and thus implicitly depend on $r, \ell,$ and $b$).

Our next result states that  $\lim_{r \to \infty} \mu^r_\ell[b]$
is well-defined. To that end, we first define a strip in the complex plane
\begin{equation*}
    S_a = \{z \mid \real(z) \in [-a-1, a+1], \imag(z) \in [-a,a]\}.
\end{equation*}

\begin{lemma} \label{thm:local}
Under Assumptions \ \ref{assump:incom} and \ref{assump:decay}, there exist unique
measures $\mu_\ell[b], \ell = 1,2,$ such that for all $F$ that are analytic on $S_{a} \times S_{a}$,
\begin{equation*}
\int_{\mathbb{R}^2}F(E_1,E_2)d\mu_{\ell}^r[b](E_1,E_2) \rightarrow  \int_{\mathbb{R}^2} F(E_1,E_2) d\mu_{\ell}[b](E_1,E_2)
\end{equation*}
with the rate
\begin{equation*}\begin{split}
   \left|\int_{\mathbb{R}^2}F(E_1,E_2)d\mu_{\ell}^r[b](E_1,E_2) -  \int_{\mathbb{R}^2} F(E_1,E_2) d\mu_{\ell}[b](E_1,E_2)\right| &\\
    \lesssim \sup_{z,z' \in S_{a} \setminus S_{a/2}}|F(z,z')| e^{-\gamma a r - c \log(a)},&
\end{split} \end{equation*}
for some $c, \gamma > 0$. {Furthermore, we have the maps
\begin{align*}
   &b \mapsto \int_{\mathbb{R}^2} F(E_1,E_2) d\mu_{\ell}^r[b](E_1,E_2) \in C^n(\Gamma_{\tau(\ell)}), \qquad \text{and} \\
   &b \mapsto \int_{\mathbb{R}^2} F(E_1,E_2) d\mu_{\ell}[b](E_1,E_2)
   \in C_\per^n(\Gamma_{\tau(\ell)}).
\end{align*}}
\end{lemma}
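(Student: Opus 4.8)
The plan is to reduce everything to a Combes–Thomas–type resolvent estimate and then to a local-convergence argument in the spirit of \cite{massatt2017}, carried out componentwise in the two energy variables. The key observation is that the test functions $F$ are analytic on $S_a\times S_a$, so by a two-dimensional Cauchy integral representation we can write
\begin{equation*}
\int_{\mathbb{R}^2}F(E_1,E_2)\,d\mu_\ell^r[b] = \frac{1}{(2\pi i)^2}\oint_{\partial S_a}\oint_{\partial S_a} F(z_1,z_2)\,g_\ell^r[b](z_1,z_2)\,dz_1\,dz_2,
\end{equation*}
where $g_\ell^r[b](z_1,z_2) = \sum_{\alpha\in\A_\ell}\langle e_{0\alpha}|(z_1-H_\ell^r(b))^{-1}\,\partial_p H_\ell^r(b)\,(z_2-H_\ell^r(b))^{-1}\,\partial_{p'}H_\ell^r(b)|e_{0\alpha}\rangle$ is the Stieltjes-type transform of the local current-current correlation measure. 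Everything then hinges on showing that $g_\ell^r[b]$ converges as $r\to\infty$, uniformly for $z_1,z_2$ in the contour $\partial S_a$ (which lies at distance $\gtrsim a$ from the real spectrum, and hence from $\sigma(H_\ell(b))$ since $\|H\|_{\rm op}<1$), with an exponential-in-$ar$ rate, and that the limit is smooth (resp.\ periodic-smooth) in $b$.

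\emph{First} I would establish the resolvent decay: since $\mathrm{dist}(z_i,\sigma(H_\ell^r(b)))\gtrsim a$ for $z_i\in\partial S_a$, and since $H$ has the exponentially localized off-diagonal structure of Assumption~\ref{assump:decay}, a Combes–Thomas estimate gives $|[(z_i-H_\ell^r(b))^{-1}]_{R\alpha,R'\alpha'}|\lesssim a^{-1}e^{-c\, a\,|R-R'|}$ with a constant $c$ proportional to the localization rate $\gamma_0$ (the factor $a$ in the exponent comes from the distance to the spectrum; the $\log a$ loss in the stated rate will come from tracking this dependence carefully, cf.\ \cite{massatt2017}). The velocity operators $\partial_p H_\ell^r(b)$ carry one extra factor of $|R-R'|$ but are still exponentially localized, so the matrix product $(z_1-H_\ell^r(b))^{-1}\partial_p H_\ell^r(b)(z_2-H_\ell^r(b))^{-1}\partial_{p'}H_\ell^r(b)$ has entries decaying like $e^{-c'a|R-R'|}$ (with possibly $a$-dependent polynomial prefactors absorbed into the $\log a$). \emph{Second}, I would use this locality to compare $g_\ell^r[b]$ with $g_\ell^{r'}[b]$ for $r'>r$: the relevant matrix entries $\langle e_{0\alpha}|\cdots|e_{0\alpha}\rangle$ only sample sites within distance $O(a^{-1}\log(\text{stuff}))$ of the origin, but the truncation to $\Omega_r$ only perturbs the resolvent through sites near $\partial B_r$, at distance $\gtrsim r$ from the origin; hence $|g_\ell^r[b]-g_\ell^{r'}[b]|\lesssim a^{-2}e^{-\gamma a r}$ uniformly on the contour. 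This Cauchy criterion yields a well-defined limit $g_\ell[b]$ with the claimed rate after integrating against $F$ over $\partial S_a\times\partial S_a$ (the boundary length contributes an $a$, the $F$ factor contributes $\sup_{z,z'\in S_a\setminus S_{a/2}}|F(z,z')|$, and the remaining $a$-powers and constants go into $c\log a$).

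\emph{Third}, for the regularity statements: $b\mapsto H_\ell^r(b)$ is $C^n$ into the bounded operators by Assumption~\ref{assump:decay} (the shift $b$ enters only through $h_{\alpha\alpha'}$, which is $C^n$ with exponentially localized derivatives up to order $n$), and the resolvent $z\mapsto(z-H_\ell^r(b))^{-1}$ is analytic, hence $b\mapsto g_\ell^r[b]$ is $C^n$ with derivatives again satisfying Combes–Thomas-type bounds uniformly on the contour; differentiating the Cauchy integral under the integral sign then gives $b\mapsto\int F\,d\mu_\ell^r[b]\in C^n(\Gamma_{\tau(\ell)})$, and the uniform-in-$b$ exponential convergence upgrades this to $C^n$ convergence, so the limit is $C^n$. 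Periodicity of the limit in $b$ is the statement that shifting $b$ by a lattice vector of $\R_{\tau(\ell)}$ leaves $\mu_\ell[b]$ invariant: for finite $r$ this is only true up to a boundary error because $\Omega_r$ is not shift-invariant, but that error is exactly what vanishes in the limit by the same locality estimate—this is where the incommensurate/ergodic structure implicitly enters, as in \cite{massatt2017}.

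\emph{The main obstacle} I anticipate is bookkeeping the $a$-dependence in the Combes–Thomas estimates so as to land exactly on the $e^{-\gamma a r - c\log a}$ rate rather than something weaker: one must track how the resolvent bound degrades as the contour $\partial S_a$ is pushed further from the spectrum (helpful) while the velocity operators and the iterated products introduce polynomial-in-distance and hence, after summation, polynomial-in-$a$ prefactors (harmful), and verify these combine into the single $\log a$ correction. A secondary subtlety is that the measures $\mu_\ell^r[b]$ and $\mu_\ell[b]$ are \emph{matrix}-valued (indexed by $p,p'$) and, being current-current correlations, need not be positive; uniqueness of the limiting measure should nonetheless follow because analytic functions on $S_a\times S_a$ are determined by—and separate—such finite, compactly-supported measures, but I would state this carefully (e.g.\ via a Stone–Weierstrass/Riesz representation argument on the spectral supports, which are contained in $[-1,1]^2$).
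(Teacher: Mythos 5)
Your proposal follows essentially the same route as the paper's proof: a double contour-integral (resolvent) representation of $\int F\,d\mu_\ell^r[b]$, Combes--Thomas-type locality estimates for the truncated resolvents (the paper imports these from \cite[Lemma 4.2]{massatt2017}), a Cauchy-in-$r$ comparison exploiting that the origin-centered matrix element only feels the truncation through exponentially small boundary contributions, and then $C^n$ regularity by differentiating the resolvent representation in $b$ with periodicity recovered only in the limit $r\to\infty$. The paper organizes the comparison step via projections $P_{r/2}$, $1-P_{r/2}$ and a telescoping product decomposition, but this is just a concrete implementation of the locality bookkeeping you describe, so the approaches coincide.
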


Combining Lemma \ref{thm:local} and Lemma \ref{thm:ergodic}, we are now
ready to define  the thermodynamic limit of the current-current correlation
measure and associated conductivity tensor by
\begin{align}
   \notag
   \mu &= \nu\biggr( \int_{\Gamma_2} \mu_1[b] \,db
                   + \int_{\Gamma_1} \mu_2[b] \,db\biggl) \qquad \text{and} \\
  \label{eq:conductivity}
   \sigma &= \int F_{\parelem} \, d\mu(E_1,E_2),
\end{align}
where
\begin{equation*}
\nu = \frac{1}{|\Gamma_1|\cdot |\A_1| + |\Gamma_2|\cdot |\A_2|}.
\end{equation*}
Moreover, we propose an alternative approximation to $\mu$ that exploits
the configuration integrals, and the corresponding approximation
of the conductivity,
\begin{align}
   \notag
   \mu^r &= \nu \biggr( \int_{\Gamma_2} \mu_1^r[b] \, db
               + \int_{\Gamma_1} \mu_2^r[b] \, db \biggl), \qquad \text{and} \\
   \label{e:finite_conductivity}
   \sigma^r &= \int F_{\parelem}  \, d\mu^r(E_1,E_2).
\end{align}
With these definitions, we can state our first main result.

\begin{theorem}
   \label{thm:main}
   Let Assumptions \ref{assump:incom} and \ref{assump:decay} be satisfied,
   then
   \begin{equation*}
      \bar{\sigma}^r \to \sigma \qquad \text{and} \qquad
      \sigma^r \to \sigma \qquad \text{as } r \to \infty.
   \end{equation*}
   More precisely,
   if $\lambda = \min\{ \eta, \beta^{-1}\}$, then there exist
   constants $c, \gamma > 0,$ independent of $\lambda$ and $r$,
   such that
   \begin{equation*}
      |\sigma - \sigma^r| \lesssim e^{-\gamma \lambda r - c \log(\lambda)}.
   \end{equation*}
\end{theorem}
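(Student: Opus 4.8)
The plan is to establish the two convergences by different routes: the quantitative estimate for $\sigma^r$ should follow almost directly from Lemma~\ref{thm:local} once the domain of analyticity of the conductivity function $F_\parelem$ has been pinned down, whereas $\bsigma^r\to\sigma$ will require the near-sightedness and ergodicity argument already used for the density of states in \cite{massatt2017}. All identities are to be read componentwise in $p,p'\in\{1,2\}$, and I write $\lambda=\min\{\eta,\beta^{-1}\}$.

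First I would locate where $F_\parelem$ of \eqref{e:conductivity_function} is holomorphic. The Fermi--Dirac factor $E\mapsto f_\beta(E-E_F)$ is holomorphic on $\{|\imag E|<\pi/\beta\}$, with $|f_\beta(E-E_F)|\le1$ once $|\imag E|\le\pi/(2\beta)$; the factor $(E_1-E_2)^{-1}$ produces only a removable singularity, since the numerator is a divided difference of $f_\beta(\cdot-E_F)$, so $F_\parelem$ extends holomorphically across $\{E_1=E_2\}$; and $(E_1-E_2+\omega+\iota\eta)^{-1}$ has its pole on $\{\imag(E_1-E_2)=-\eta\}$. Choosing $a=\kappa\lambda$ with $\kappa$ a small universal constant (possible because $\min\{\eta,\pi/\beta\}\gtrsim\lambda$), $F_\parelem$ is then holomorphic on $S_a\times S_a$ with $\sup_{z,z'\in S_a}|F_\parelem(z,z')|\lesssim\beta/\eta\lesssim\lambda^{-2}$. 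With this in hand the bound for $\sigma^r$ is immediate: from \eqref{eq:conductivity} and \eqref{e:finite_conductivity},
\[
\sigma^r-\sigma=\nu\sum_{\ell=1,2}\int_{\Gamma_{\tau(\ell)}}\Bigl(\int_{\mathbb R^2}F_\parelem\,d\mu_\ell^r[b]-\int_{\mathbb R^2}F_\parelem\,d\mu_\ell[b]\Bigr)\,db ,
\]
and Lemma~\ref{thm:local}, whose constants $\gamma,c$ are universal, bounds the inner difference uniformly in $b$ by $\lesssim\sup_{z,z'\in S_a\setminus S_{a/2}}|F_\parelem(z,z')|\,e^{-\gamma a r-c\log(a)}\lesssim\lambda^{-2}e^{-\gamma\kappa\lambda r-c\log(\kappa\lambda)}$; integrating over the compact cells and summing the two terms gives $|\sigma-\sigma^r|\lesssim e^{-\gamma'\lambda r-c'\log\lambda}$ with $\gamma',c'>0$ depending only on $\kappa$ and the constants of Lemma~\ref{thm:local}, hence independent of $\lambda$ and $r$.

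For $\bsigma^r\to\sigma$ I would use the Cauchy integral formula in both spectral variables (legitimate since, by $\|H^r\|_{\rm op}<1$, the spectrum lies in the interior of $S_a$ where $F_\parelem$ is holomorphic) to rewrite \eqref{eq:def_currcurrcormeasure_Omegar} as
\[
\bsigma^r=\frac{1}{|\Omega_r|}\sum_{R\alpha\in\Omega_r}\langle e_{R\alpha}|\mathcal F_\parelem[H^r]|e_{R\alpha}\rangle ,\qquad
\mathcal F_\parelem[A]:=\oint_{\partial S_a}\oint_{\partial S_a}F_\parelem(z_1,z_2)\,(z_1-A)^{-1}(\partial_p A)(z_2-A)^{-1}(\partial_{p'}A)\,\frac{dz_1\,dz_2}{(2\pi\iota)^2} .
\]
A Combes--Thomas estimate controls the off-diagonal decay of $(z-H^r)^{-1}$ for $z\in\partial S_a$ at a rate comparable to $a=\kappa\lambda$, and $\partial_p H^r$ is exponentially localized by Assumption~\ref{assump:decay}; together with the resolvent identity this shows that $\langle e_{R\alpha}|\mathcal F_\parelem[H^r]|e_{R\alpha}\rangle$ is determined, up to an error $\epsilon$, by the entries of $H^r$ in a ball of radius $L\lesssim\lambda^{-1}\log(\lambda^{-1}\epsilon^{-1})$ around $R$, and likewise for the corresponding infinite operators. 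For a bulk site $R\in\R_\ell$ with $\mathrm{dist}(R,\partial B_r)\ge L$, that truncated Hamiltonian agrees — after translating $R$ to the origin and relabelling layer $\tau(\ell)$, cf.\ Remark~\ref{rem:configurations} — with the same truncation of $H_\ell(b(R))$, $b(R)\in\Gamma_{\tau(\ell)}$ the configuration associated with $R$, so by \eqref{e:finite} and Lemma~\ref{thm:local},
\[
\sum_{\alpha\in\A_\ell}\langle e_{R\alpha}|\mathcal F_\parelem[H^r]|e_{R\alpha}\rangle=\int_{\mathbb R^2}F_\parelem\,d\mu_\ell[b(R)]+\mathcal O(\epsilon) ;
\]
the boundary sites number $\mathcal O(rL)$ and hence contribute $\mathcal O(L/r)$ after normalization by $|\Omega_r|$, which is of order $r^2$. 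Since $b\mapsto\int F_\parelem\,d\mu_\ell[b]\in C_\per^n(\Gamma_{\tau(\ell)})$ by Lemma~\ref{thm:local}, Lemma~\ref{thm:ergodic} converts the resulting lattice averages into configuration-space integrals; weighting the two layers by their site densities $\#(\R_\ell\cap B_r)\sim\pi r^2/|\Gamma_\ell|$ reproduces the normalization $\nu$ of \eqref{eq:conductivity}, and letting $r\to\infty$ with $\epsilon$ fixed and then $\epsilon\to0$ gives $\bsigma^r\to\sigma$.

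The first two steps are essentially bookkeeping; their only delicate points are the removable singularity of $F_\parelem$ on $\{E_1=E_2\}$, tracking how $\sup|F_\parelem|\lesssim\lambda^{-2}$ generates the $-c\log\lambda$ term, and choosing $a$ proportional to $\lambda$ with a universal constant so the final $\gamma,c$ are $\lambda$-independent. The hard part is the near-sightedness estimate for the two-variable functional $\mathcal F_\parelem[H^r]$: one must show its diagonal entries are fixed, up to $\epsilon$, by a ball of radius $\mathcal O(\lambda^{-1}\log\epsilon^{-1})$, which calls for Combes--Thomas bounds on $(z-H^r)^{-1}$ along $\partial S_a$ (whose distance to the spectrum is comparable to $\lambda$), the exponential localization of the velocity operators $\partial_p H^r$ — these carry the slowly growing weight $|R-R'|$ but stay exponentially localized under Assumption~\ref{assump:decay} — and a resolvent-identity argument to bound the truncation error. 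Identifying local environments across the two layers is supplied by the configuration-space formalism of \cite{massatt2017} via Remark~\ref{rem:configurations}, and the density bookkeeping producing $\nu$ is routine.
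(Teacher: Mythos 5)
Your proposal is correct and follows essentially the same route as the paper: the rate for $\sigma^r$ is obtained, as in the paper, by combining Lemma~\ref{thm:local} with the analyticity of $F_{\parelem}$ on a strip of width $\sim\lambda=\min\{\eta,\beta^{-1}\}$ (the $\sup|F_{\parelem}|\lesssim\lambda^{-2}$ prefactor feeding the $-c\log\lambda$ term), and $\bsigma^r\to\sigma$ is proved by the same locality-plus-ergodicity argument: contour/resolvent representation of the diagonal entries, Combes--Thomas-type off-diagonal decay, a bulk/boundary split, identification of bulk environments via Remark~\ref{rem:configurations}, and Lemma~\ref{thm:ergodic}. The only difference is bookkeeping: the paper compares bulk diagonal entries to measures $\mu_\ell^D[b]$ on a fixed buffer domain $\Omega_D$ and sends $D\to\infty$ after the ergodic limit, whereas you localize to an $\epsilon$-dependent radius and apply ergodicity directly to the limiting periodic map $b\mapsto\int F_{\parelem}\,d\mu_\ell[b]$ --- an equivalent arrangement of the same estimates.
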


\begin{remark}
   Although we prove convergence of $\bar{\sigma}^r \to \sigma$,
   we do not obtain a rate. Indeed, as a supercell-like approximation
   of an incommensurate system this sequence is expected to converge
   slowly~\cite{cauchyborn17}. Here, $\bar{\sigma}^r$ has error proportional to $(\eta r)^{-1}$ from the boundary effects, as the error of the domain edge site contributions do not decay. This is poor decay compared to the exponential convergence found in the\ $\sigma^r$ scheme \eqref{e:finite_conductivity}. For the development of a numerical algorithm
   (see Section \ref{sec:numerics}), we therefore use the expression for
   $\sigma^r$ as a starting point, where large domain sizes $r$
   are replaced by an (embarrassingly parallel) integration over local
   configurations. { We note that the convergence rate for the effect of a local perturbation in a crystal can often be improved by
more sophisticated boundary conditions \cite{PhysRevB.45.6543}.  However, the perturbation due to incommensurability in 2D bilayers is
global, but we have shown that an exponential rate of convergence can nonetheless be achieved by integration over local configuration.}
\end{remark}

% !TEX root = Conductivity_Bilayer.tex

\section{Linear Scaling Algorithm for Local Conductivities}\label{sec:numerics}

We have seen in Section \ref{sec:main} that the conductivity of an infinite incommensurate bilayer can be written as
\begin{equation}\label{e:conductivity_integral}
\sigma
=
\lim_{r \to \infty}
\sigma^r
=
\lim_{r \to \infty}
\nu \left(\int_{\Gamma_2} \sigma^r_1[b] \, db + \int_{\Gamma_1} \sigma^r_2[b] \, db\right)
\end{equation}
where the local conductivities $\sigma^r_\ell[b]$ are given by
\begin{align}
    \sigma^r_\ell[b]
    &:=
    \int F_{\zeta}(E_1,E_2) \, d\mu^r_\ell[b](E_1,E_2)
    \nonumber
    \\&=
    \sum_{i_1,i_2}
    F_\zeta(\varepsilon_{i_1},\varepsilon_{i_2})
    \,
    \langle v_{i_1} | \partial_p H^r_\ell(b) | v_{i_2}\rangle
    \,
    \langle v_{i_2} | \partial_{p'} H^r_\ell(b) | e_{0\alpha} \rangle \langle e_{0\alpha} | v_{i_1} \rangle
    \label{e:local_conductivity}
    .
\end{align}
This section will present a method for evaluating the local conductivities $\sigma^r_\ell[b]$ based on polynomial and rational approximation of the conductivity function $F_\zeta(E_1,E_2)$.
When combined with any off-the-shelf quadrature rule for evaluating the integrals over $\Gamma_1$, $\Gamma_2$ in \eqref{e:conductivity_integral} (e.g.\ the periodic trapezoidal rule, see Subsection~\ref{ssec:quad_convergence}), our method gives rise to a conductivity algorithm which involves three limits: 1) the number of terms in the approximation of $F_\zeta(E_1,E_2)$ going to infinity, 2) the number of quadrature points in \eqref{e:conductivity_integral} going to infinity, and 3) the localization radius $r$ going to infinity.

The main feature of the local conductivity algorithm proposed in this section is that it scales linearly in the number of explicitly represented degrees of freedom $|\Omega_r|$.
It is this linear scaling which sets our algorithm apart from the more straightforward approach of diagonalizing $H$ and inserting the resulting eigenvalues $\varepsilon_i$ and -vectors $v_i$ into \eqref{e:local_conductivity}, which would scale cubically in $|\Omega_r|$, but we caution that the ``linear-scaling'' label is also somewhat misleading since $\Omega_r$ (or equivalenty, $r$) is not an independent variable but rather should be chosen as a function of $\beta$ and $\eta$; cf.\ Theorem \ref{thm:main}.
We will further elaborate on this point in Remark \ref{rem:flop_counts} where we compare our algorithm and the diagonalization algorithm based on their overall scaling with respect to $\beta$ and $\eta$.

As mentioned, the focus of this section is to compute a single local conductivity $\sigma_\ell^r[b]$ for fixed values of the localization radius $r$, sheet index $\ell$ and bilayer shift $b$.
We therefore reduce the notational clutter by introducing the abbreviations
\[
H_\mathrm{loc} := H^{r}_\ell(b)
,\qquad
M_p^\mathrm{loc} := \partial_p H^{r}_\ell(b)
.
\]

\subsection{Algorithm outline}
\label{ssec:algorithm}

Let us consider an approximate conductivity function $\tilde F_{\zeta}$ obtained by truncating the Chebyshev series of $F_{\zeta}$,
\begin{align}
    \tilde F_\zeta(E_1,E_2)
    &:=
    \sum_{(k_1,k_2) \in K}
    c_{k_1 k_2} \, T_{k_1}(E_1) \, T_{k_2}(E_2)
    \label{eqn:truncated_chebyshev_series}
    \\&\,\approx
    \,\,\,\sum_{k_1,k_2 = 0}^\infty\,\,
    c_{k_1 k_2} \, T_{k_1}(E_1) \, T_{k_2}(E_2)
    =
    F_\zeta(E_1,E_2)
    \label{eqn:chebyshev_series}
\end{align}
where $K \subset \mathbb{N}^2$ is a finite set of indices and $T_k(E)$ denotes the $k$th Chebyshev polynomial defined through the three-term recurrence relation
\begin{equation}\label{eqn:chebrecursion}
T_0(x) = 1, \quad
T_1(x) = x, \quad
T_{k+1}(x) = 2x \, T_k(x) - T_{k-1}(x)
.
\end{equation}
Inserting \eqref{eqn:truncated_chebyshev_series} into \eqref{e:local_conductivity},
we obtain an approximate local conductivity
\begin{align}
    \tilde \sigma_\ell^r[b]
    &:=
    \sum_{i_1,i_2}
    \tilde F_\zeta(\varepsilon_{i_1},\varepsilon_{i_2})
    \,
    \langle v_{i_1} | M^\mathrm{loc}_p | v_{i_2}\rangle
    \,
    \langle v_{i_2} | M^\mathrm{loc}_{p'} | e_{0\alpha} \rangle \langle e_{0\alpha} | v_{i_1} \rangle
    \nonumber
    \\&=
    \sum_{i_1,i_2}
    \sum_{(k_1,k_2) \in K}
    c_{k_1 k_2}
    \,
    \langle e_{0\alpha} | v_{i_1} \rangle
    \,
    T_{k_1}(\varepsilon_{i_1})
    \,
    \langle v_{i_1} | M^\mathrm{loc}_p | v_{i_2}\rangle
    \,
    T_{k_2}(\varepsilon_{i_2})
    \,
    \langle v_{i_2} | M^\mathrm{loc}_{p'} | e_{0\alpha} \rangle
    \nonumber
    \\&=
    \sum_{(k_1,k_2) \in K}
    c_{k_1k_2} \,
    \Big(T_{k_1}(H_\mathrm{loc}) \, M^\mathrm{loc}_p \, T_{k_2}(H_\mathrm{loc}) \, M^\mathrm{loc}_{p'}\Big)_{0\alpha,0\alpha}
    \label{eqn:approximate_local_conductivity}
\end{align}
which can be evaluated without computing the eigendecomposition as follows.
\begin{algorithm}[H] % Do not remove this `[H]`.
    \caption{Local conductivity via Chebyshev approximation}
    \label{alg:local_conductivity}
    \begin{algorithmic}[1]
        \State $\displaystyle | v_{k_1} \rangle := M^\mathrm{loc}_p \, T_{k_1}(H_\mathrm{loc}) \, | e_{0\alpha} \rangle$ for all $k_1 \in K_1 := \{k_1 \mid \exists k_2 : (k_1,k_2) \in K\}$.
        \label{alg:local_conductivity:v}

        \vspace{0.5em}

        \State $\displaystyle | w_{k_2} \rangle := T_{k_2}(H_\mathrm{loc}) \, M^\mathrm{loc}_{p'} \, | e_{0\alpha} \rangle$ for all $k_2 \in K_2 := \{k_2 \mid \exists k_1 : (k_1,k_2) \in K\}$.
        \label{alg:local_conductivity:w}

        \vspace{0.5em}

        \State $\displaystyle
            \tilde \sigma_\ell^r[b]
            :=
            \sum_{(k_1,k_2) \in K} c_{k_1k_2} \, \langle v_{k_1} | w_{k_2} \rangle.
            $
        \label{alg:local_conductivity:vw}
    \end{algorithmic}
\end{algorithm}

Lines \ref{alg:local_conductivity:v} and \ref{alg:local_conductivity:w} of Algorithm \ref{alg:local_conductivity} take $|K_{1}|$ and $|K_2|$, respectively, matrix-vector products when evaluated using the recurrence relation \eqref{eqn:chebrecursion},
while Line \ref{alg:local_conductivity:vw} requires $|K|$ inner products.
Due to the sparsity of $H_\mathrm{loc}$, both types of products take $\mathcal{O}\bigl(|\Omega_r|\bigr)$ floating-point operations, thus we conclude that Algorithm \ref{alg:local_conductivity} scales linearly in the matrix size $|\Omega_r|$.
Furthermore, the error in the computed local conductivity $\tilde\sigma_\ell^r[b]$ can be estimated in terms of the dropped Chebyshev coefficients $c_{k_1k_2}$ as follows.

\begin{lemma}\label{lem:conductivity_approx_bound}
    It holds
    \[
    \big| \tilde \sigma_\ell^r[b] - \sigma_\ell^r[b] \big|
    \lesssim
    % C \,
    \sum_{(k_1,k_2) \in \mathbb{N}^2 \setminus K}
    |c_{k_1k_2}|
    .
    \]
\end{lemma}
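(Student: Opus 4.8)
The plan is to bound the error $\tilde\sigma_\ell^r[b] - \sigma_\ell^r[b]$ by the difference $F_\zeta - \tilde F_\zeta$ integrated against the local current-current correlation measure $\mu^r_\ell[b]$, and then to control this by the sum of the dropped Chebyshev coefficients using a crude uniform bound on $|T_k|$ on the relevant spectral interval together with a uniform bound on the total variation of $\mu^r_\ell[b]$. First I would write, using \eqref{e:local_conductivity} and \eqref{eqn:approximate_local_conductivity},
\[
    \tilde\sigma_\ell^r[b] - \sigma_\ell^r[b]
    =
    \sum_{i_1,i_2}
    \bigl(\tilde F_\zeta - F_\zeta\bigr)(\varepsilon_{i_1},\varepsilon_{i_2})
    \,
    \langle v_{i_1} | M^\mathrm{loc}_p | v_{i_2}\rangle
    \,
    \langle v_{i_2} | M^\mathrm{loc}_{p'} | e_{0\alpha} \rangle
    \langle e_{0\alpha} | v_{i_1} \rangle
    ,
\]
and then substitute $(\tilde F_\zeta - F_\zeta)(E_1,E_2) = -\sum_{(k_1,k_2)\in\mathbb{N}^2\setminus K} c_{k_1k_2} T_{k_1}(E_1) T_{k_2}(E_2)$, so that the error becomes
\[
    -\sum_{(k_1,k_2)\in\mathbb{N}^2\setminus K} c_{k_1k_2}
    \,
    \bigl(T_{k_1}(H_\mathrm{loc}) M^\mathrm{loc}_p T_{k_2}(H_\mathrm{loc}) M^\mathrm{loc}_{p'}\bigr)_{0\alpha,0\alpha}
    .
\]

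Next I would bound each term by $|c_{k_1k_2}|$ times an operator-norm estimate that is uniform in $k_1,k_2$. Since the Hamiltonian has been rescaled so that $\|H\|_{\rm op}<1$ (hence $\|H_\mathrm{loc}\|_{\rm op}\le 1$), the spectrum of $H_\mathrm{loc}$ lies in $[-1,1]$, and therefore $\|T_k(H_\mathrm{loc})\|_{\rm op} = \max_{x\in[-1,1]}|T_k(x)| = 1$ for every $k$. Writing the single matrix entry as $\langle e_{0\alpha} | T_{k_1}(H_\mathrm{loc}) M^\mathrm{loc}_p T_{k_2}(H_\mathrm{loc}) M^\mathrm{loc}_{p'} | e_{0\alpha}\rangle$ and applying Cauchy--Schwarz together with submultiplicativity of the operator norm, each term is bounded by $\|M^\mathrm{loc}_p\|_{\rm op}\,\|M^\mathrm{loc}_{p'}\|_{\rm op}$, which is a finite constant depending only on the decay of $h$ (via Assumption \ref{assump:decay}, since $M^\mathrm{loc}_p = \partial_p H^r_\ell(b)$ has exponentially decaying, position-weighted entries and hence a bounded operator norm that is uniform in $r$ and $b$). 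Summing over $(k_1,k_2)\in\mathbb{N}^2\setminus K$ then gives exactly $\lesssim \sum_{(k_1,k_2)\in\mathbb{N}^2\setminus K} |c_{k_1k_2}|$.

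The main technical point — and the only place where some care is needed — is the uniform bound $\|\partial_p H^r_\ell(b)\|_{\rm op} \le C$ with $C$ independent of $r$ and $b$; this follows from a Schur-test / Combes--Thomas-type argument using $|[\partial_p H]_{R\alpha,R'\alpha'}| = |(R'-R)_p|\,|h_{\alpha\alpha'}(R-R')| \lesssim |R-R'| e^{-\gamma_0|R-R'|}$, which is still summable over lattice neighbours. One subtlety worth flagging is that the shift argument of $H_\ell(b)$ only translates the interlayer blocks, so the velocity operator picks up the correct displacement weights and the same exponential-decay bound applies with constants uniform in $b\in\Gamma_{\tau(\ell)}$. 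Everything else is routine: the argument never uses analyticity of $F_\zeta$ or convergence of the Chebyshev series — it is a purely algebraic rearrangement plus a uniform operator-norm bound — so in particular it holds for any finite truncation set $K$, which is what the statement asserts. The implied constant in $\lesssim$ is $\|M^\mathrm{loc}_p\|_{\rm op}\,\|M^\mathrm{loc}_{p'}\|_{\rm op}$ up to the fixed number of tensor components $p,p'\in\{1,2\}$.
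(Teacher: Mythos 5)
Your argument is correct and is essentially the paper's own proof: the paper likewise reduces the error to the tail sum over $\mathbb{N}^2 \setminus K$ via \eqref{eqn:approximate_local_conductivity} and invokes the uniform boundedness of $T_k(H_\mathrm{loc})$ and $M_p^\mathrm{loc}$, which you simply spell out (spectrum in $[-1,1]$ gives $\|T_k(H_\mathrm{loc})\|_{\rm op}\leq 1$, and Assumption \ref{assump:decay} gives the $r$- and $b$-uniform bound on $\|\partial_p H_\ell^r(b)\|_{\rm op}$). The only cosmetic caveat is your remark that no convergence of the Chebyshev series is used: to write the error as the tail series you do need the series to represent $F_\zeta$ on $[-1,1]^2$ (guaranteed here by analyticity, and the bound is vacuous anyway when the tail diverges), but this does not affect the validity of the proof.
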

\begin{proof}
    The bound follows immediately from \eqref{eqn:approximate_local_conductivity} after noting that
    $M^\mathrm{loc}_p$ and $T_{k}(H_\mathrm{loc})$ are bounded for $p \in \{1,2\}$ and all $k \in \mathbb{N}$.
\end{proof}

A more careful analysis of Algorithm \ref{alg:local_conductivity} reveals that since $|K_{1}|, |K_{2}| \leq |K|$ and both matrix-vector and inner products take $\mathcal{O}(|\Omega_r|)$ floating-point operations, the computational cost of this algorithm is dominated by the cost of Line \ref{alg:local_conductivity:vw} which is $|K|$ inner products.
In the light of Lemma \ref{lem:conductivity_approx_bound}, a good choice for the set $K$ is
\[
K(\tau) := \big\{ (k_1,k_2) \in \mathbb{N}^2 \mid |c_{k_1k_2}| \geq \tau \big\}
\]
for some truncation tolerance $\tau$; thus $|K|$ is linked to the decay of the Chebyshev coefficients which in turn depends on the analyticity properties of $F_\zeta$.
To analyze these, it is convenient to split the conductivity function $F_\zeta(E_1,E_2) = f_\mathrm{temp}(E_1,E_2) \, f_\mathrm{relax}(E_1,E_2)$ into the two factors
\begin{equation}\label{eqn:fermi_factor}
    f_\mathrm{temp}(E_1,E_2)
    :=
    \frac{f_\beta(E_1 - E_F) - f_\beta(E_2 - E_F)}{E_1 - E_2}
\end{equation}
and
\begin{equation}\label{eqn:relaxation_factor}
f_\mathrm{relax}(E_1,E_2)
:=
\frac{1}{E_1 - E_2 + \omega + \iota \eta}
,
\end{equation}
which are easily seen to be analytic\footnote{A precise definition of analyticity in two dimensions will be provided in Definition~\ref{def:analyticity_2d}.} everywhere except, respectively, on the sets
\begin{equation}\label{eqn:fermi_singularities}
S_\mathrm{temp}
:=
\big( S^{(1)}_\mathrm{temp} \times \mathbb{C} \big)
\cup
\Big( \mathbb{C} \times S^{(1)}_\mathrm{temp} \Big)
\quad
\text{with}
\quad
S^{(1)}_\mathrm{temp} := \big\{E_F + \tfrac{\iota \pi k}{\beta} \mid k \text{ odd}\big\}
\end{equation}
and
\begin{equation}\label{eqn:relaxation_singularities}
S_\mathrm{relax} := \big\{(E_{1},E_2) \in \mathbb{C}^2 \mid E_1 - E_2 + \omega + \iota \eta = 0 \big\}.
\end{equation}
The conductivity function $F_\zeta$ is thus analytic except on the union of these two sets.

In one dimension, it is well known that the Chebyshev coefficients $c_k$ of a function $f(x)$ analytic on a neighborhood of $[-1,1]$ decay exponentially, $|c_k| \leq C \, \exp(-\alpha \, k)$, and the decay rate $\alpha$ is equal\footnote{More precisely, it is the \emph{asymptotic} rate of decay which is equal to the parameter of the ellipse of analyticity. Further details are provided in Appendix \ref{sec:proofs_numerics}.} to the parameter $\alpha$ of the largest Bernstein ellipse
\begin{equation}\label{eqn:Bernstein_ellipses}
    E(\alpha)
    :=
    \Big\{
    \cosh(\tilde\alpha) \, \cos(\theta)
    +
    \iota \,
    \sinh(\tilde\alpha) \, \sin(\theta)
    \big)
    \mid
    \tilde \alpha \in [0,\alpha), \theta \in [0,2\pi)
    \Big\}
\end{equation}
which can be inscribed into the domain of analyticity of $f$.
In two dimensions, we have two decay rates $\alpha_1,\alpha_2$ and in the case of the conductivity function $F_\zeta$ we have two sets of singularities $S_\mathrm{temp}$, $S_\mathrm{relax}$ limiting the possible values of $\alpha_1$ and $\alpha_2$.
This suggests to partition the space of parameters $\zeta$ into \emph{relaxation-constrained}, \emph{mixed-constrained}, and \emph{temperature-constrained} depending on whether two, one, or zero of the decay rates are constrained by the singularities $S_\mathrm{relax}$ rather than $S_\mathrm{temp}$.
In Subsection~\ref{ssec:decay_analysis}, we will characterize these parameter regimes more precisely and present asymptotic estimates regarding the number of significant Chebyshev coefficients in each case. A summary of our findings is provided in Table \ref{tbl:Chebyshev_analysis}.
We see that for fixed $\eta$, the cost of Algorithm \ref{alg:local_conductivity} gradually increases from $\mathcal{O}\bigl(\eta^{-3/2}\bigr)$ to $\mathcal{O}\bigl(\beta^2\bigr)$ for increasing inverse temperature $\beta$ which renders conductivity calculations at low temperatures (i.e.,\ large $\beta$) particularly expensive.
In Subsection \ref{ssec:pole_expansion}, we present an alternative algorithm based on a pole expansion of $F_\zeta$ which provably reduces the cost of evaluating the local conductivity to $\mathcal{O}\bigl(\beta^{1/2} \, \eta^{-5/4}\bigr)$ inner products for all $\beta \gtrsim \eta^{-1/2}$ and whose actual scaling was empirically found to be $\mathcal{O}\bigl(\beta^{1/2} \, \eta^{-1.05}\bigr)$ inner products (see~\eqref{eqn:pole_expansion_asymptotics}).

\def\TT{\rule{0pt}{2.6ex}}       % Top strut
\def\BB{\rule[-1.2ex]{0pt}{0pt}} % Bottom strut

\begin{table}
    \caption{Classification of conductivity parameters $\zeta$ and number of significant terms {(up to logarithmic factors of $\beta$ and $\eta$)} in the Chebyshev series of $F_\zeta$.}
    \label{tbl:Chebyshev_analysis}
    \begin{tabular}{| c | c | c |}
        \hline
        Constraint & Parameter range & $\#$ significant terms \TT\BB \\
        \hline \hline
        Relaxation & $\beta \lesssim \eta^{-1/2}$ & $\mathcal{O}\bigl(\eta^{-3/2}\bigr)$  \TT\BB\\
        \hline
        Mixed & $\eta^{-1/2} \lesssim \beta \lesssim \eta^{-1}$ & $\mathcal{O}\bigl(\beta \eta^{-1}\bigr)$ \TT\BB\\
        \hline
        Temperature & $\eta^{-1} \lesssim \beta$ & $\mathcal{O}\bigl(\beta^2\bigr)$ \TT\BB\\
        \hline
    \end{tabular}
\end{table}

\subsection{Chebyshev coefficients of the conductivity function}\label{ssec:decay_analysis}

A convenient way to visualize the set $S_\mathrm{relax}$ from \eqref{eqn:relaxation_singularities} is to draw two copies of the interval $[-1,1]$ with a shift $\omega + \eta\iota$ between them (the green and blue lines in Figure \ref{fig:singularities}), and
the singularities $S_\mathrm{temp}$ from \eqref{eqn:fermi_singularities} can be added to this picture by drawing a copy of $S^{(1)}_\mathrm{temp}$ relative to each of these intervals (the green and blue dots in Figure \ref{fig:singularities}).
We will see in Appendix \ref{sec:proofs_numerics} that the decay of the Chebyshev coefficients of $F_\zeta(E_1,E_2)$ is determined by the size of the ellipses $E(\alpha_1)$, $E(\alpha_2)$ which can be drawn around the two copies of $[-1,1]$ subject to the following constraints.
\begin{enumerate}
    \item\label{rule:relax}Neither ellipse may contain the endpoints of the other copy of $[-1,1]$.
    \item\label{rule:temp}Neither ellipse may contain any of the points in its copy of $S^{(1)}_\mathrm{temp}$.
    \item\label{rule:overlap}The two ellipses may not overlap if their parameters $\alpha_1, \alpha_2$ are both positive. However, we will see that it is possible for one of the parameters to assume a negative effective value, in which case overlap is admissible (see Figure \ref{fig:contours_relaxation_constrained}).
\end{enumerate}

\begin{figure}
    \centering
    \includegraphics{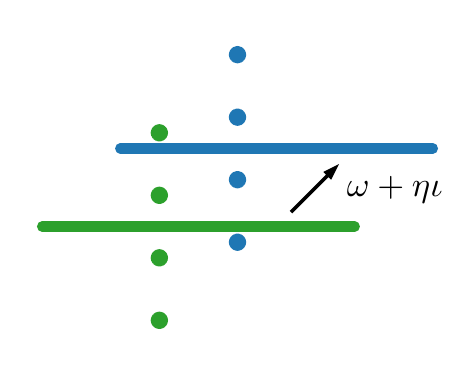}
    \caption{
    Singularities $S_\mathrm{relax} \cup S_\mathrm{temp}$ of the conductivity function $F_\zeta(E_1,E_2)$.
    The solid lines indicate two copies of $[-1,1]$ shifted by $\omega + \iota\eta$ relative to each other, and the dots indicate the set $S^{(1)}_\mathrm{temp}$ relative to the interval of the same color.
    }
    \label{fig:singularities}
\end{figure}

\begin{figure}
    \centering
    \subfloat[Relaxation]{\label{fig:contours_relaxation_constrained}{\includegraphics{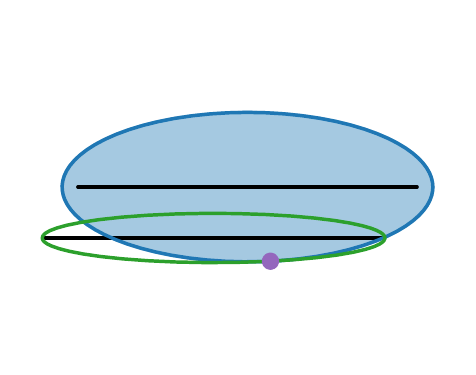}}}
    \hspace{0.2em}
    \subfloat[Mixed]{\label{fig:contours_mixed_constrained}{\includegraphics{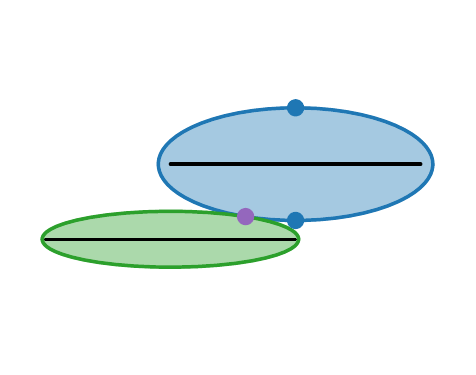}}}
    \hspace{0.2em}
    \subfloat[Temperature]{\label{fig:contours_temperature_constrained}{\includegraphics{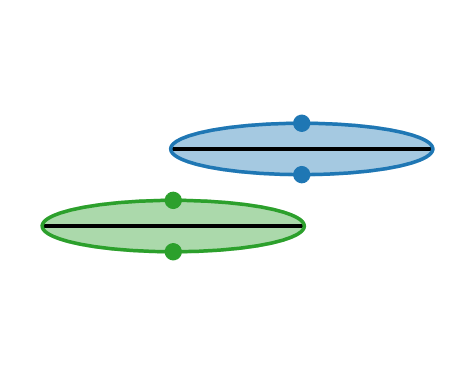}}}
    \caption{
    Ellipse pairs for relaxation-, mixed- and temperature-constrained parameters. The blue and green dots indicate the points in $S_\mathrm{temp}$ restricting the ellipses.
    The purple dots indicate the $x^\star(\zeta)$ introduced in \eqref{eqn:xstar}.
    }
    \label{fig:contours_with_temperature}
\end{figure}

Let us now determine pairs of ellipses by first choosing the upper (blue) ellipse as large as possible subject to Rules \ref{rule:relax} and \ref{rule:temp}, and then maximizing the lower (green) ellipse subject to Rules \ref{rule:temp} and \ref{rule:overlap} for the given upper ellipse.
This procedure allows us to distinguish the relaxation-, mixed- and temperature-constrained parameters $\zeta$ as follows.
\begin{itemize}
    \item Relaxation-constrained: $\beta$ is small enough such that Rule \ref{rule:relax} restricts the upper ellipse. See Figure \ref{fig:contours_relaxation_constrained}.
    \item Mixed-constrained: $\beta$ is large enough such that Rule \ref{rule:temp} restricts the upper ellipse, but it is small enough such that Rule \ref{rule:overlap} restricts the lower ellipse. See Figure \ref{fig:contours_mixed_constrained}.
    \item Temperature-constrained: $\beta$ is large enough such that Rule \ref{rule:temp} restricts both the upper and the lower ellipse. See Figure \ref{fig:contours_temperature_constrained}.
\end{itemize}

\begin{theorem}\label{thm:conductivity_coeffs_bound}
    There exist $\alpha_\mathrm{diag}(\zeta)$ and $\alpha_\mathrm{anti}(\zeta) > 0$ such that the Chebyshev coefficients $c_{k_1k_2}$ of $F_\zeta$ are bounded by
    \begin{equation}\label{eqn:conductivity_coeffs_bound}
        |c_{k_1,k_2}|
        \leq
        C(\zeta)\,
        \exp\bigl[
            -\alpha_\mathrm{diag}(\zeta) \, (k_1+k_2)
            -\alpha_\mathrm{anti}(\zeta) \, |k_1 - k_2|
        \bigr]
    \end{equation}
    for some $C(\zeta) < \infty$ independent of $k_1,k_2$.
    In the limit $\beta \to \infty$, $\omega,\eta \to 0$ with $|\omega| \lesssim \eta$, and assuming $E_F \in \,(-1,1)$,
    we have that
    \begin{align*}
        \alpha_\mathrm{diag}(\zeta) &=
        \begin{cases}
            \Theta\bigl(\eta\bigr) & \text{if $\zeta$ is relaxation- or mixed-constrained}, \\
            \Theta\bigl(\beta^{-1}\bigr) & \text{if $\zeta$ is temperature-constrained}, \quad \text{and} \\
        \end{cases}
        \\
        \alpha_\mathrm{anti}(\zeta) &=
        \begin{cases}
            \Theta\bigl(\eta^{1/2}\bigr) & \text{if $\zeta$ is relaxation-constrained}, \\
            {\mathcal{O}\bigl(\beta^{-1}\bigr)} & {\text{if $\zeta$ is mixed-constrained},} \\
            {0} & {\text{if $\zeta$ is temperature-constrained}} \\
        \end{cases}
    \end{align*}
    and
    \[
    \left\{
    \begin{aligned}
        &\beta \lesssim \eta^{-1/2} && \text{if $\zeta$ is relaxation-constrained}, \\
        \eta^{-1/2} \lesssim {} & \beta \lesssim \eta^{-1} && \text{if $\zeta$ is mixed-constrained}, \\
        \eta^{-1} \lesssim {} & \beta && \text{if $\zeta$ is temperature-constrained}. \\
    \end{aligned}
    \right.
    \]
\end{theorem}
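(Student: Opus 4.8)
The plan is to reduce the two-dimensional Chebyshev decay problem to a pair of one-dimensional decay estimates by exploiting the structure of the singularity sets $S_\mathrm{temp}$ and $S_\mathrm{relax}$. First I would establish the general bound \eqref{eqn:conductivity_coeffs_bound}: the key observation is that the change of variables $(s,d) = \bigl(\tfrac{E_1+E_2}{2}, \tfrac{E_1-E_2}{2}\bigr)$ turns the ``diagonal'' decay rate $\alpha_\mathrm{diag}$ into a statement about analyticity in the $s$ variable and the ``anti-diagonal'' rate $\alpha_\mathrm{anti}$ into analyticity in $d$. More concretely, using the tensor-product Bernstein-ellipse theory recalled after \eqref{eqn:Bernstein_ellipses}, a function analytic on $E(\alpha_1) \times E(\alpha_2)$ has Chebyshev coefficients bounded by $C \exp(-\alpha_1 k_1 - \alpha_2 k_2)$; so I would show that $F_\zeta$ admits, for every admissible ellipse pair, such a region of analyticity, and then optimize the exponent $\alpha_1 k_1 + \alpha_2 k_2$ over all admissible pairs. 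Since the admissible region is governed by Rules \ref{rule:relax}--\ref{rule:overlap}, which are symmetric under swapping the two copies of $[-1,1]$, the optimal exponent depends on $(k_1,k_2)$ only through $k_1+k_2$ and $|k_1-k_2|$, which yields precisely the form \eqref{eqn:conductivity_coeffs_bound} with $\alpha_\mathrm{diag}$ the rate available to both ellipses simultaneously and $\alpha_\mathrm{anti}$ the extra rate one can extract by enlarging one ellipse at the expense of the other. The subtlety flagged in Rule \ref{rule:overlap} — that one effective parameter may go negative — must be handled by allowing $\alpha_\mathrm{anti}$ to partially cancel $\alpha_\mathrm{diag}$, which is why the bound is stated with $|k_1-k_2|$ rather than being separable.

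Next I would compute the asymptotics of $\alpha_\mathrm{diag}$ and $\alpha_\mathrm{anti}$ in the stated regime $\beta \to \infty$, $\omega,\eta \to 0$ with $|\omega| \lesssim \eta$ and $E_F \in (-1,1)$. The relevant geometric facts are: (i) the nearest $S_\mathrm{temp}$ singularity to $[-1,1]$ sits at vertical distance $\pi/\beta$, so a Bernstein ellipse around $[-1,1]$ avoiding it has parameter $\Theta(\beta^{-1})$ when $\beta^{-1}$ is the binding constraint; (ii) two copies of $[-1,1]$ separated by $\omega + \iota\eta$ — with $|\omega+\iota\eta| = \Theta(\eta)$ — admit ellipses of parameter $\Theta(\eta)$ before one contains the other's endpoint (Rule \ref{rule:relax}), and this is the diagonal rate in the relaxation/mixed regimes; (iii) for the anti-diagonal rate in the relaxation-constrained case, after fixing the upper ellipse at its maximal size one can grow the lower ellipse until the two touch, and because near a shared boundary point two confocal-type ellipses of near-equal size are separated by a gap of order $\eta$ transverse to a ``bridge'' of length $\Theta(\eta^{1/2})$ along their long axes, the surplus parameter is $\Theta(\eta^{1/2})$ — this is the source of the $\eta^{-3/2}$ count, since $\#\{(k_1,k_2): \alpha_\mathrm{diag}(k_1+k_2) + \alpha_\mathrm{anti}|k_1-k_2| \lesssim \log(1/\tau)\} = \Theta(\eta^{-1} \cdot \eta^{-1/2})$. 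In the mixed-constrained case Rule \ref{rule:temp} caps the upper ellipse at $\Theta(\beta^{-1})$ while Rule \ref{rule:overlap} still binds the lower one, and a short computation of the residual slack gives $\alpha_\mathrm{anti} = \mathcal{O}(\beta^{-1})$; in the temperature-constrained case both ellipses are independently capped at $\Theta(\beta^{-1})$ by Rule \ref{rule:temp} and there is no overlap constraint left, so $\alpha_\mathrm{anti} = 0$ and the diagonal rate is $\Theta(\beta^{-1})$, giving $\Theta(\beta^2)$ significant terms. Finally the $\beta$--$\eta$ thresholds separating the three cases are read off by equating the competing constraints: Rule \ref{rule:relax} stops binding when the $\Theta(\eta)$ ellipse first hits an $S_\mathrm{temp}$ point at height $\pi/\beta$, i.e.\ at $\beta \sim \eta^{-1/2}$ (using that an ellipse of parameter $\alpha$ has semi-minor axis $\sim \alpha$ but already reaches vertical extent $\sim \sqrt{\alpha}$ near its ends — this is the same $\eta \leftrightarrow \sqrt\eta$ asymmetry as in (iii)); and Rule \ref{rule:overlap} stops binding when the $\beta^{-1}$-capped ellipses no longer reach each other, i.e.\ at $\beta \sim \eta^{-1}$.

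The main obstacle, and the step I would spend the most care on, is the geometry underlying (iii) and the $\beta \sim \eta^{-1/2}$ threshold: quantifying precisely how large a Bernstein ellipse can be before it (a) contains a point at vertical distance $\pi/\beta$, versus (b) contains the far endpoint of a copy of $[-1,1]$ translated by $\Theta(\eta)$, versus (c) overlaps a near-equal companion ellipse. All three are elementary but the answers are governed by the local shape of the ellipse near its \emph{endpoints} (where curvature blows up as $\alpha \to 0$ and the ellipse is essentially a parabola of width $\sim\sqrt\alpha$ in the long direction for height $\sim\alpha$), not near its flat sides, and it is exactly this endpoint behavior that produces the $\eta^{1/2}$ exponents and the half-power thresholds rather than the naive $\eta$ scalings. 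I would isolate this in a lemma: for $\alpha$ small, $E(\alpha)$ contains the point $1 + \delta$ (for real $\delta>0$) iff $\delta \gtrsim \alpha^2$, and contains $1 + \iota h$ iff $h \gtrsim \alpha$; translating these into the constraints above, optimizing, and bookkeeping the logarithmic factors absorbed into the ``up to logarithmic factors'' caveat of Table \ref{tbl:Chebyshev_analysis} then completes the proof. The one-dimensional-analyticity-implies-exponential-decay input and the tensor-product lifting are deferred to Appendix \ref{sec:proofs_numerics} as already promised in the text.
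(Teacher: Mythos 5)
There is a genuine gap at the heart of your argument, namely in how the bound \eqref{eqn:conductivity_coeffs_bound} is supposed to be produced. Your mechanism is the tensor-product Bernstein-ellipse estimate (Corollary \ref{cor:ellipse_bound}) applied to ``every admissible ellipse pair,'' followed by an optimization over pairs. But that estimate requires $F_\zeta$ to be analytic on the product $E(\alpha_1)\times E(\alpha_2)$, and for $f_\mathrm{relax}$ from \eqref{eqn:relaxation_singularities} this forces $\bigl(E(\alpha_1)+\omega+\iota\eta\bigr)\cap E(\alpha_2)=\emptyset$, i.e.\ (up to constants) $\alpha_1+\alpha_2\lesssim\eta$ along the flat sides of the ellipses. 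Optimizing $\alpha_1k_1+\alpha_2k_2$ over that admissible set only yields $|c_{k_1k_2}|\lesssim\exp\bigl(-c\,\eta\,\max(k_1,k_2)\bigr)$, i.e.\ $\alpha_\mathrm{diag}\sim\alpha_\mathrm{anti}\sim\eta$ and the naive $\mathcal{O}(\eta^{-2})$ coefficient count --- it can never produce the relaxation-constrained rate $\alpha_\mathrm{anti}=\Theta(\eta^{1/2})$, which is the whole point of the theorem. You correctly sense that one effective parameter must be allowed to go negative, but you give no mechanism by which a Chebyshev-coefficient bound with a \emph{negative} rate can be proved; product-domain analyticity cannot supply one, and the $(s,d)$ change of variables you mention does not interact cleanly with tensor-product Chebyshev expansions. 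The paper's proof is built precisely around this missing device: the contour-integral formula for the coefficients (Theorem \ref{thm:formula}) together with a generalized bound allowing arbitrary branch cuts and contour domains (Theorem \ref{thm:bound}), so that for $k_1\geq k_2$ the $E_1$-contour is a large ellipse of parameter $\hat\alpha_\mathrm{max}=\Theta(\eta^{1/2})$ while the $E_2$-contour is deformed to a curve $\hat b^\star(s)$ dipping \emph{below} $[-1,1]$, with the branch cut of $\sqrt{x^2-1}$ moved along with it; by the antisymmetry in Lemma \ref{lem:Joukowsky_properties} this contributes a negative rate $\hat\alpha_\mathrm{min}<0$, and symmetrizing over $k_1\gtrless k_2$ gives Theorem \ref{thm:frac_bound} and hence \eqref{eqn:conductivity_coeffs_bound}. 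Deferring this to the appendix ``as promised in the text'' is not legitimate here, since that appendix content is exactly the statement you are asked to prove.

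Several of the geometric facts you rely on for the asymptotics are also wrong, even though the final exponents happen to match. The maximal ellipse avoiding the other interval's endpoint (Rule \ref{rule:relax}) has parameter $\Theta(\eta^{1/2})$, not $\Theta(\eta)$: the shifted endpoint lies at distance $\Theta(\eta)$ from $1$, and by the endpoint asymptotics (cf.\ Lemma \ref{lem:Joukowsky_asymptotics}) the ellipse through such a point has parameter of order the square root of that distance. The diagonal rate $\Theta(\eta)$ comes instead from the near-cancellation $\alpha_\mathrm{max}+\alpha_\mathrm{min}=\Theta(\eta)$ with $\alpha_\mathrm{min}\approx-\alpha_\mathrm{max}$, not from Rule \ref{rule:relax}. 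Your endpoint lemma is stated backwards: $E(\alpha)$ contains $1+\delta$ iff $\delta\lesssim\alpha^2$ and contains $1+\iota h$ iff $h\lesssim\alpha^2$ (the ellipse reaches height $\sim\alpha^2$, not $\sim\alpha$, above the endpoint), and the claim that an ellipse of parameter $\alpha$ ``reaches vertical extent $\sim\sqrt{\alpha}$ near its ends'' is false --- its vertical half-width is at most $\sinh\alpha\approx\alpha$, attained near the center. Consequently your derivation of the $\beta\sim\eta^{-1/2}$ threshold rests on an invalid picture; the correct comparison (as in Definition \ref{def:classification} and Subsubsection \ref{sssec:classification}) is between the relaxation cap $\alpha_{[-1,1]}(1-|\omega|+\iota\eta)=\Theta(\eta^{1/2})$ on the larger ellipse and the temperature cap $\alpha_{[-1,1]}(E_F+\pi\iota/\beta)=\Theta(\beta^{-1})$ set by the first Fermi--Dirac pole above the interior point $E_F$.
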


A proof of Theorem \ref{thm:conductivity_coeffs_bound} and exact formulae for $\alpha_\mathrm{diag}(\zeta),$ and $\alpha_\mathrm{anti}(\zeta)$ are provided in Appendix \ref{sec:proofs_numerics}.
Figures \ref{fig:coeffs_relaxation} to \ref{fig:coeffs_temperature} show Chebyshev coefficients matching the predictions of Theorem \ref{thm:conductivity_coeffs_bound} perfectly.

\begin{figure}
    \centering
    \subfloat[$\beta = \frac{\pi}{5\sqrt{\eta}}$ (far relaxation)]{\label{fig:coeffs_far_relaxation}\includegraphics{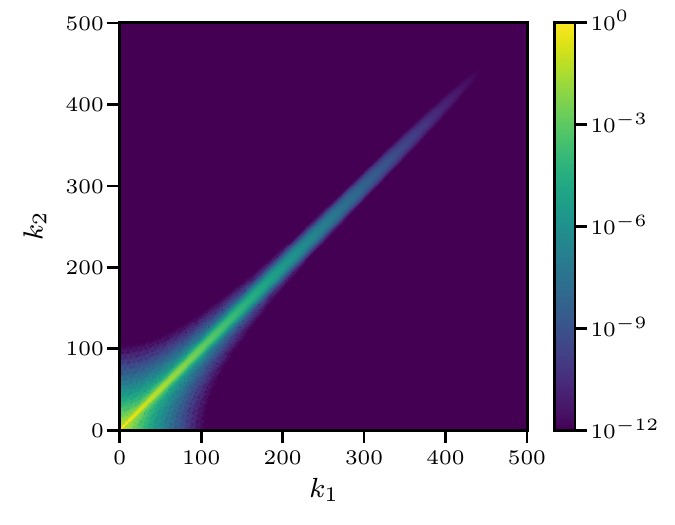}}
    \subfloat[$\beta = \frac{\pi}{\sqrt{\eta}}$ (relaxation)]{\label{fig:coeffs_relaxation}\includegraphics{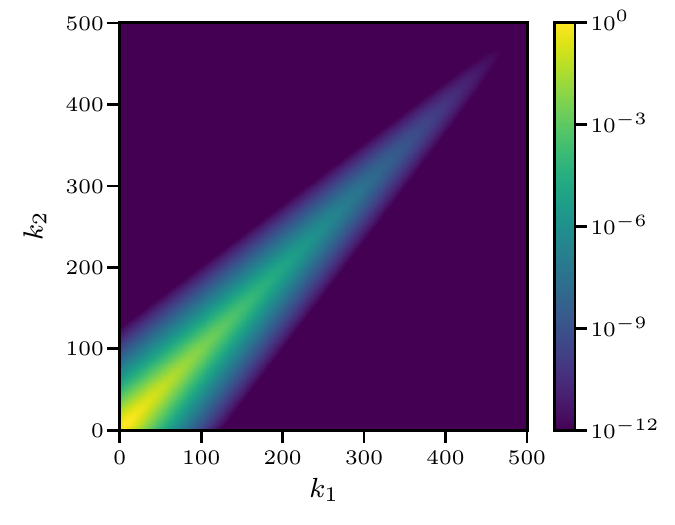}}

    \subfloat[$\beta = \frac{\pi}{2\eta}$ (mixed)]{\label{fig:coeffs_mixed}\includegraphics{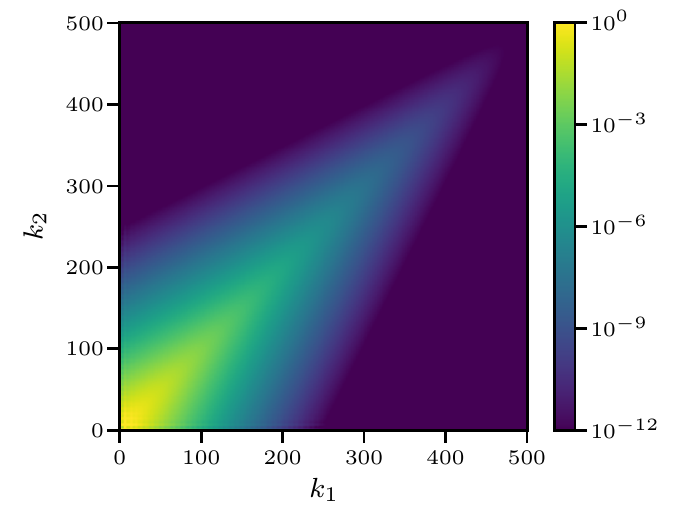}}
    \subfloat[$\beta = \frac{2\pi}{\eta}$ (temperature)]{\label{fig:coeffs_temperature}\includegraphics{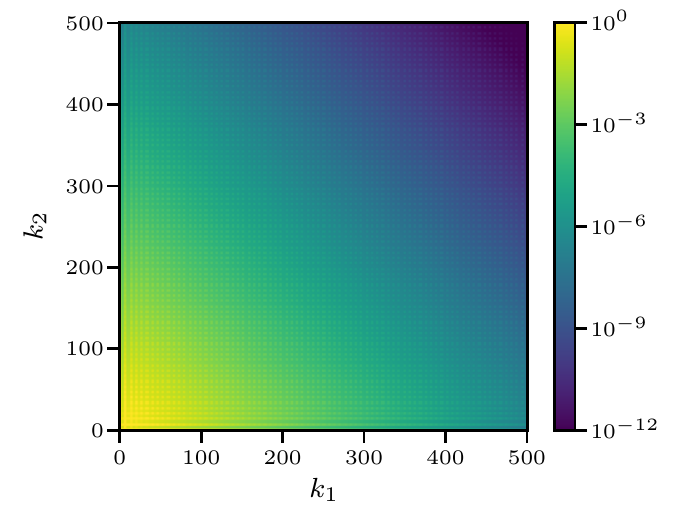}}

    \captionsetup{singlelinecheck=off}
    \caption[]{
        Normalized Chebyshev coefficients $\hat c_{k_1k_2} := {|c_{k_1k_2}|}/{|c_{00}|}$ of the conductivity function $F_\zeta$ with $E_F = \omega = 0$, $\eta = 0.06,$ and $\beta$ as indicated.
    }
    \label{fig:coeffs}
\end{figure}

We numerically observed the bound \eqref{eqn:conductivity_coeffs_bound} to describe the correct decay behavior and the decay rates of $\alpha_\mathrm{diag}(\zeta)$ and  $\alpha_\mathrm{anti}(\zeta)$ to be quantitatively accurate for temperature- and mixed-constrained parameters as well for relaxation-constrained parameters with $\beta$ close to the critical value $\beta \approx \eta^{-1/2}$.
For relaxation-constrained parameters far away from this critical value, however, the level lines of $c_{k_1k_2}$ are piecewise concave rather than piecewise straight as predicted by Theorem \ref{thm:conductivity_coeffs_bound}, see Figure \ref{fig:coeffs_far_relaxation}, and {we empirically found that} this extra concentration reduces the number of significant Chebyshev coefficients from $\mathcal{O}\bigl(\eta^{-3/2}\bigr)$ to $\mathcal{O}\bigl(\eta^{-1.1}\bigr)$, see Figure \ref{fig:ncoeffs}.

\begin{figure}
    \centering
    \includegraphics{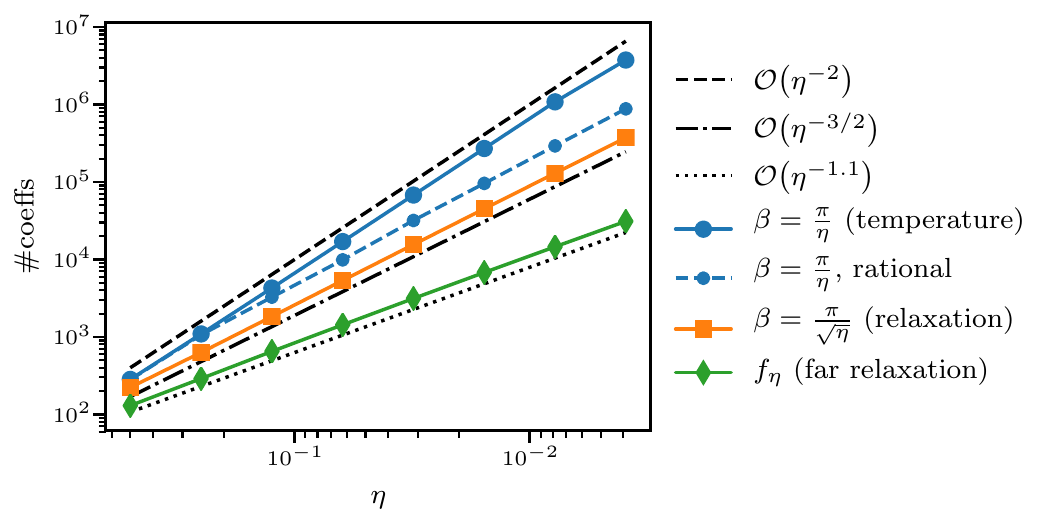}
    \caption{
    Number of normalized Chebyshev coefficients $\hat c_{k_1k_2} := {|c_{k_1k_2}|}/{|c_{00}|}$ larger than $10^{-3}$ for $F_\zeta$ with $E_F = \omega = 0$ and $f_\eta(E_1,E_2) := \frac{1}{E_1 - E_2 + \iota\eta}$.
    The ``rational'' line refers to the total number of Chebyshev coefficients in the pole expansion from Theorem~\ref{thm:pole_expansion} as described in Figure~\ref{fig:rational_ncoeffs}.
    }
    \label{fig:ncoeffs}
\end{figure}

Theorem \ref{thm:conductivity_coeffs_bound} suggests to truncate the Chebyshev series \eqref{eqn:chebyshev_series} using
\begin{equation}\label{eqn:truncated_indices}
    K(\tau) :=
    \Big\{
        (k_1,k_2) \in \mathbb{N}^2
        \mid
        \exp\bigl(
            -\alpha_\mathrm{diag} \, |k_1+k_2|
            -\alpha_\mathrm{anti} \, |k_1 - k_2|
        \bigr)
        \geq \tau
    \Big\},
\end{equation}
where here and in the following we no longer explicitly mention the dependence of $\alpha_\mathrm{diag}(\zeta), \alpha_\mathrm{anti}(\zeta)$ on $\zeta$.
The following theorem analyzes the error incurred by this approximation.

\begin{theorem}\label{thm:error_estimate}
    It holds that
    \begin{equation}\label{eqn:error_estimate}
        \big| \tilde \sigma_\ell^r[b] - \sigma_\ell^r[b] \big|
        =
        \mathcal{O}\Big(\alpha_\mathrm{diag}^{-1} \, \alpha_\mathrm{anti}^{-1} \, \tau \, |\log(\tau)|\Big)
        .
    \end{equation}
\end{theorem}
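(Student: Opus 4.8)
The plan is to chain Lemma~\ref{lem:conductivity_approx_bound} with the coefficient bound of Theorem~\ref{thm:conductivity_coeffs_bound} and then reduce everything to a two-dimensional exponential tail sum. By Lemma~\ref{lem:conductivity_approx_bound} it suffices to bound $\sum_{(k_1,k_2)\in\mathbb{N}^2\setminus K(\tau)}|c_{k_1k_2}|$, and by Theorem~\ref{thm:conductivity_coeffs_bound} we have $|c_{k_1k_2}|\le C(\zeta)\,w(k_1,k_2)$ with $w(k_1,k_2):=\exp\bigl(-\alpha_\mathrm{diag}(k_1+k_2)-\alpha_\mathrm{anti}|k_1-k_2|\bigr)$. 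By the very definition~\eqref{eqn:truncated_indices} of $K(\tau)$, its complement is exactly the set $\{(k_1,k_2):w(k_1,k_2)<\tau\}$, so (absorbing the prefactor $C(\zeta)$ into the implied constant, and assuming $\alpha_\mathrm{anti}>0$, since otherwise the claimed bound is vacuous) the theorem reduces to the estimate
\[
    \Sigma(\tau):=\sum_{(k_1,k_2):\,w(k_1,k_2)<\tau} w(k_1,k_2)
    = \mathcal{O}\!\left(\alpha_\mathrm{diag}^{-1}\,\alpha_\mathrm{anti}^{-1}\,\tau\,|\log\tau|\right).
\]

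For this I would change coordinates to $s:=k_1+k_2$ and $d:=|k_1-k_2|$, so that $w=e^{-\alpha_\mathrm{diag}s-\alpha_\mathrm{anti}d}$ and the summation constraint becomes $\alpha_\mathrm{diag}s+\alpha_\mathrm{anti}d>L$ with $L:=|\log\tau|$; each admissible pair $(s,d)$ (with $0\le d\le s$ and $d\equiv s\pmod 2$) corresponds to at most two lattice points $(k_1,k_2)$. For fixed $s$, the relevant $d$ are those exceeding $d_0(s):=\max\{0,(L-\alpha_\mathrm{diag}s)/\alpha_\mathrm{anti}\}$, and summing the resulting geometric series (of ratio $e^{-2\alpha_\mathrm{anti}}$) gives $\sum_{d>d_0(s)}e^{-\alpha_\mathrm{anti}d}\lesssim\alpha_\mathrm{anti}^{-1}\,e^{-\alpha_\mathrm{anti}d_0(s)}$, using $1-e^{-2\alpha_\mathrm{anti}}\gtrsim\alpha_\mathrm{anti}$ as $\alpha_\mathrm{anti}\to0$.

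Finally I would split the outer sum over $s$ at $s^\star:=L/\alpha_\mathrm{diag}$. For $s<s^\star$ one has $e^{-\alpha_\mathrm{anti}d_0(s)}=e^{-(L-\alpha_\mathrm{diag}s)}=\tau\,e^{\alpha_\mathrm{diag}s}$, so each such $s$ contributes $e^{-\alpha_\mathrm{diag}s}\cdot\alpha_\mathrm{anti}^{-1}\tau\,e^{\alpha_\mathrm{diag}s}=\alpha_\mathrm{anti}^{-1}\tau$; since there are $\mathcal{O}(L/\alpha_\mathrm{diag})$ such values, this piece is $\mathcal{O}(\alpha_\mathrm{diag}^{-1}\alpha_\mathrm{anti}^{-1}\tau\,L)$. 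For $s\ge s^\star$ one has $d_0(s)=0$, so the contribution is $\lesssim\alpha_\mathrm{anti}^{-1}\sum_{s\ge s^\star}e^{-\alpha_\mathrm{diag}s}\lesssim\alpha_\mathrm{diag}^{-1}\alpha_\mathrm{anti}^{-1}e^{-\alpha_\mathrm{diag}s^\star}=\alpha_\mathrm{diag}^{-1}\alpha_\mathrm{anti}^{-1}\tau$, which is absorbed by the first piece; recalling $L=|\log\tau|$ then yields \eqref{eqn:error_estimate}. The only care required is the parity/lattice bookkeeping in the coordinate change and the uniform-in-$\alpha$ geometric-series constant; I do not expect a genuine obstacle here, as this is a routine exponential tail-sum computation, with the logarithmic factor arising precisely from the $\mathcal{O}(L/\alpha_\mathrm{diag})$ diagonals $s<s^\star$ that the truncation only partially discards.
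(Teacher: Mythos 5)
Your proposal is correct and follows essentially the same route as the paper's proof: both chain Lemma~\ref{lem:conductivity_approx_bound} with the coefficient bound \eqref{eqn:conductivity_coeffs_bound} and then estimate the tail sum over $\mathbb{N}^2\setminus K(\tau)$ via geometric series, splitting the outer index at the point where the truncation boundary leaves the quadrant, with the logarithmic factor coming from the $\mathcal{O}(|\log\tau|/\alpha_\mathrm{diag})$ slices before the split. Your $(s,d)=(k_1+k_2,|k_1-k_2|)$ coordinates are merely a rewriting of the paper's weights $\exp(-\alpha_\mathrm{max}k_1-\alpha_\mathrm{min}k_2)$ on $\{k_1\ge k_2\}$, and your two pieces correspond exactly to the paper's terms $A$ and $B$.
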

\begin{proof}
    See Appendix \ref{ssec:error_bound_proof}.
\end{proof}

In applications, we usually specify a truncation tolerance $\tau > 0$ such that \eqref{eqn:error_estimate} is upper-bounded by an error tolerance $\varepsilon > 0$.
It is shown in Appendix~\ref{ssec:tlogt_inverse} that this can be achieved by setting
$\tau_\varepsilon := \frac{\alpha_\mathrm{diag} \, \alpha_\mathrm{anti} \, \varepsilon}{|\log(\alpha_\mathrm{diag} \, \alpha_\mathrm{anti} \, \varepsilon)|}$, which yields
\begin{equation}\label{eqn:K_scaling}
|K(\tau_\varepsilon)|
=
\mathcal{O}\left(
    \frac{
        |\log(\alpha_\mathrm{diag} \, \alpha_\mathrm{anti} \, \varepsilon)|^2
    }{
        \alpha_\mathrm{diag} \, \alpha_\mathrm{anti}
    }
\right)
.
\end{equation}
Table \ref{tbl:Chebyshev_analysis} then follows by combining \eqref{eqn:K_scaling} with Theorem \ref{thm:conductivity_coeffs_bound}.

\subsection{Pole expansion for low-temperature calculations}\label{ssec:pole_expansion}

We have seen in the previous subsection that for increasing $\beta$, the sparsity in the Chebyshev coefficients of $F_\zeta$ induced by the factor $\frac{1}{E_1 - E_2 + \omega + \iota\eta}$ decreases and the number of coefficients eventually scales as $\mathcal{O}\bigl(\beta^2\bigr)$ such that Algorithm \ref{alg:local_conductivity} becomes expensive at low temperatures.
To avoid this poor low-temperature scaling, we propose to expand $F_\zeta$ into a sum over the poles in $S_\mathrm{temp}$ as described in Theorem \ref{thm:pole_expansion} below and apply Algorithm \ref{alg:local_conductivity} to each term separately.

\begin{theorem}\label{thm:pole_expansion}
    Let $k \in \mathbb{N}$ and denote by $\alpha_{k,\beta,E_F}$ the parameter of the ellipse through the Fermi-Dirac poles $E_F \pm \frac{(2k+1)\,\pi\iota}{\beta}$.
    There exists a function $R_{k,\beta,E_F}(E_1,E_2)$ analytic on the biellipse $E\bigl(\alpha_{k,\beta,E_F}\bigr)^2 \supset E\bigl(\alpha_{0,\beta,E_F}\bigr)^2$ such that
    \begin{equation}\label{eqn:pole_expansion}
        F_\zeta(E_1,E_2)
        =
        \tfrac{1}{E_1 - E_2 + \omega + \iota\eta}
        \left(
        \sum_{z \in Z_{k}} \tfrac{1}{\beta} \, \tfrac{1}{(E_1 - z) \, (E_2 - z)}
        +
        R_{k,\beta,E_F}(E_1,E_2)
        \right)\!
        ,
    \end{equation}
    where
    \[
    Z_{k}
    :=
    \bigl\{
    E_F + \tfrac{\ell \pi \iota}{\beta}
    \mid
    \ell \in \{ -2k+1, -2k+3, \ldots, 2k-3, 2k-1 \}
    \bigr\}
    \subset S_{\beta,E_F}
    .
    \]
\end{theorem}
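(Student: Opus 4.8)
The plan is to peel off the inert first factor. Writing $F_\zeta = f_\mathrm{relax}\cdot f_\mathrm{temp}$ as in \eqref{eqn:fermi_factor}--\eqref{eqn:relaxation_factor} and noting that $f_\mathrm{relax}(E_1,E_2)=\tfrac{1}{E_1-E_2+\omega+\iota\eta}$ is exactly the prefactor appearing in \eqref{eqn:pole_expansion}, the claim reduces to proving that
\[
    f_\mathrm{temp}(E_1,E_2)
    =
    \sum_{z\in Z_k}\frac{1}{\beta}\,\frac{1}{(E_1-z)(E_2-z)}
    + R_{k,\beta,E_F}(E_1,E_2)
\]
with $R_{k,\beta,E_F}$ analytic on $E(\alpha_{k,\beta,E_F})^2$; note this also explains why $R_{k,\beta,E_F}$ depends only on $k,\beta,E_F$. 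The starting point is that $f_\mathrm{temp}$ is the divided difference of the Fermi--Dirac function $f(E):=f_\beta(E-E_F)$, whose only singularities are the simple poles $z_\ell := E_F + \ell\pi\iota/\beta$ ($\ell$ odd), each with residue $-1/\beta$ (immediate from $f=1/(1+e^{\beta(E-E_F)})$). Hence, for $E_1\neq E_2$ at which $f$ is analytic and any positively oriented simple closed curve $\gamma$ enclosing $E_1,E_2$ but no pole $z_\ell$, the residue theorem gives
\[
    f_\mathrm{temp}(E_1,E_2)
    =
    \frac{f(E_1)-f(E_2)}{E_1-E_2}
    =
    \frac{1}{2\pi\iota}\oint_\gamma \frac{f(w)}{(w-E_1)(w-E_2)}\,dw ,
\]
and this identity persists at $E_1=E_2$ by continuity.

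Next I would deform $\gamma$ outward. Since $\alpha_{k,\beta,E_F}$ is the parameter of the ellipse through $z_{\pm(2k+1)}$, the poles $z_\ell$ with $|\ell|\le 2k-1$ --- precisely the $2k$ points of $Z_k$ --- lie strictly inside $\partial E(\rho)$ for every $\rho\in(\alpha_{k-1,\beta,E_F},\alpha_{k,\beta,E_F})$, while $z_{\pm(2k+1)}$ and all larger poles lie on or outside $\partial E(\rho)$. Fix such a $\rho$ and take $(E_1,E_2)\in E(\alpha_{0,\beta,E_F})^2$, a region containing no pole of $f$, so $\gamma$ may be chosen inside it; enlarging $\gamma$ to $\partial E(\rho)$ then sweeps exactly the residues at the $z\in Z_k$, each equal to $\tfrac{-1/\beta}{(E_1-z)(E_2-z)}$, which yields the displayed identity with
\[
    R_{k,\beta,E_F}(E_1,E_2) := \frac{1}{2\pi\iota}\oint_{\partial E(\rho)} \frac{f(w)}{(w-E_1)(w-E_2)}\,dw .
\]
Differentiation under the integral sign shows this is jointly analytic in $(E_1,E_2)$ as long as $E_1,E_2$ stay strictly inside $\partial E(\rho)$, i.e.\ on $E(\rho)^2$; and for $\rho,\rho'\in(\alpha_{k-1,\beta,E_F},\alpha_{k,\beta,E_F})$ the two integrals agree on $E(\min\{\rho,\rho'\})^2$ because no pole of $f$ separates $\partial E(\rho)$ from $\partial E(\rho')$. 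Letting $\rho\uparrow\alpha_{k,\beta,E_F}$ and using $E(\alpha_{k,\beta,E_F})=\bigcup_{\rho<\alpha_{k,\beta,E_F}}E(\rho)$, these patch to a single function analytic on the whole biellipse $E(\alpha_{k,\beta,E_F})^2$ which coincides with $f_\mathrm{temp}-\sum_{z\in Z_k}\tfrac{1/\beta}{(E_1-z)(E_2-z)}$ wherever the latter is defined; by continuity the identity --- hence \eqref{eqn:pole_expansion} after multiplying through by $f_\mathrm{relax}$ --- extends to all of $E(\alpha_{k,\beta,E_F})^2$, including the diagonal and the slices $\{E_i=z_\ell\}$, $z_\ell\in Z_k$, where the individual terms blow up but their difference does not. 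Finally, the inclusion $E(\alpha_{k,\beta,E_F})^2\supset E(\alpha_{0,\beta,E_F})^2$ follows from $\alpha_{0,\beta,E_F}\le\alpha_{k,\beta,E_F}$, which holds because for real $E_F$ (in particular $E_F\in(-1,1)$) the Bernstein-ellipse parameter of $E_F+\iota t$ is nondecreasing in $|t|$, and $|\imag(z_{\pm1})|\le|\imag(z_{\pm(2k+1)})|$.

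The residue bookkeeping and the count of enclosed poles are routine. The one step that genuinely requires care is the joint analyticity of the remainder on the \emph{full open} biellipse $E(\alpha_{k,\beta,E_F})^2$ --- in particular near points where one coordinate equals a Fermi pole $z_\ell\in Z_k$ lying inside the ellipse, or where the diagonal $E_1=E_2$ meets such a pole --- since the naive expression $f_\mathrm{temp}-\sum_{z\in Z_k}\tfrac{1/\beta}{(E_1-z)(E_2-z)}$ is a priori only meromorphic there. Representing $R_{k,\beta,E_F}$ as the contour integral over $\partial E(\rho)$ with $\rho$ just below $\alpha_{k,\beta,E_F}$ is exactly what makes this analyticity manifest, so I would organize the write-up around that representation rather than around the difference of pole sums.
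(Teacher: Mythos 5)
Your proposal is correct, but it takes a genuinely different route from the paper. The paper defines the remainder directly as the difference $R(E_1,E_2) = (E_1-E_2+\omega+\iota\eta)\,F_\zeta(E_1,E_2) - \tfrac{1}{\beta}\tfrac{1}{(E_1-z)(E_2-z)}$, computes $\lim_{E_1\to z}(E_1-z)\,f_\mathrm{temp}(E_1,E_2) = \tfrac{1}{\beta}\tfrac{1}{E_2-z}$ by L'H\^opital, and invokes Riemann's removable singularity theorem in two variables to conclude that each subtracted Fermi pole becomes a removable singularity, iterating over the poles in $Z_k$. You instead represent the divided difference $f_\mathrm{temp}(E_1,E_2)=\tfrac{f(E_1)-f(E_2)}{E_1-E_2}$ as a Cauchy-type contour integral $\tfrac{1}{2\pi\iota}\oint_\gamma \tfrac{f(w)}{(w-E_1)(w-E_2)}\,dw$, deform the contour out to $\partial E(\rho)$ with $\alpha_{k-1,\beta,E_F}<\rho<\alpha_{k,\beta,E_F}$, pick up the residues $-1/\beta$ at exactly the poles in $Z_k$, and take the outer integral itself as $R_{k,\beta,E_F}$. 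Your residue bookkeeping, the identification of which poles lie between the contours, and the patching over $\rho\uparrow\alpha_{k,\beta,E_F}$ are all sound (modulo the trivial $k=0$ edge case, where the interval $(\alpha_{k-1},\alpha_k)$ should simply be read as $(0,\alpha_0)$). What your approach buys is that analyticity of the remainder on the full open biellipse --- including the diagonal and the slices through the removed poles --- is manifest from differentiation under the integral, it treats all $2k$ poles in one stroke, and the explicit integral formula would also yield quantitative bounds on $R_{k,\beta,E_F}$; the cost is the extra contour and ellipse-parameter monotonicity bookkeeping, which the paper's shorter, more local removability argument avoids.
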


\begin{proof}
    See Appendix \ref{ssec:pole_expansion_proof}.
\end{proof}

For $k$ large enough, the remainder term (the last term in \eqref{eqn:pole_expansion}) becomes relaxation-constrained and hence Algorithm \ref{alg:local_conductivity} becomes fairly efficient.
For the pole terms, on the other hand, we propose to employ Algorithm \ref{alg:local_conductivity} using the weighted Chebyshev approximation
\begin{equation}\label{eqn:pole_matching}
\frac{1}{(E_1 - z) \, (E_2 - z) \, (E_1 - E_2 + \omega + \iota\eta)}
\approx
\sum_{k_1k_2 \in K_z}
c(z)_{k_1k_2} \, \frac{T_{k_1}(E_1)}{E_1 - z} \, \frac{T_{k_2}(E_2)}{E_2 - z}
\end{equation}
where the weight $(E - z)^{-1}$ is chosen such that two factors $(E_1 - z)^{-1}$ and $(E_2 - z)^{-1}$ on the left- and right-hand side {match}.
The coefficients $c(z)_{k_1k_1}$ {in \eqref{eqn:pole_matching}} are therefore the Chebyshev coefficients of {the} relaxation-constrained function
\[
\frac{1}{E_1 - E_2 + \omega + \iota\eta}
\approx
\sum_{k_1k_2 \in K_z}
c(z)_{k_1k_2} \, T_{k_1}(E_1) \, T_{k_2}(E_2)
\]
and exhibit the concentration described in Theorem \ref{thm:conductivity_coeffs_bound}.
This leads us to the following algorithm.

\begin{algorithm}[H]
    \caption{Local conductivity via pole expansion}
    \label{alg:local_conductivity_via_poles}
    \begin{algorithmic}[1]
        \State\label{alg:local_conductivity_via_poles:R} $\tilde\sigma_\ell^r[b] := \int \frac{R_{k,\beta,E_F}(E_1,E_2)}{E_1 - E_2 + \omega + \iota\eta} \, d\mu^r_\ell(E_1,E_2)$, evaluated using Algorithm \ref{alg:local_conductivity}.
        \For{$z \in Z_{k,\beta,E_F}$}
            \State\label{alg:local_conductivity_via_poles:frac} $\tilde\sigma_\ell^r[b] := \tilde\sigma_\ell^r[b] + \frac{1}{\beta} \int \frac{1}{(E_1 - z) \, (E_2 - z) \, (E_1 - E_2 + \omega + \iota\eta)} \, d\mu^r_\ell(E_1,E_2)$,
            evaluated using
            \Statex \hspace{\algorithmicindent}
            Algorithm \ref{alg:local_conductivity} with the weighted Chebyshev polynomials $(E-z)^{-1} \, T_k(E)$.
        \EndFor
    \end{algorithmic}
\end{algorithm}

\begin{theorem}\label{thm:pole_expansion_cost}
    The dominant computational cost of Algorithm \ref{alg:local_conductivity_via_poles} is
    \begin{equation}\label{eqn:pole_expansion_cost}
        \#{\rm IP}
        =
        \mathcal{O}\bigl(k \, \eta^{-3/2}\bigr)
        +
        \begin{cases}
            \mathcal{O}\bigl(\eta^{-3/2}\bigr) & \text{if } \beta \, \eta^{1/2} \lesssim k, \\
            \mathcal{O}\bigl(\tfrac{\beta\eta^{-1}}{k}\bigr) & \text{if } \beta \eta \lesssim k \lesssim \beta \, \eta^{1/2}, \\
            \mathcal{O}\bigl(\tfrac{\beta^2}{k^2}\bigr) & \text{if } k \lesssim \beta \eta ,\\
        \end{cases}
    \end{equation}
    inner products if we assume that solving a single linear system of the form $(H - zI)^{-1} \, v$ is less expensive than $\mathcal{O}(\eta^{-3/2}\bigr)$ inner products (see Remark~\ref{rem:linear_solvers}).
    This cost is minimized if we choose
    \begin{equation}\label{eqn:optimal_k}
        k =
        \begin{cases}
            \Theta(1) & \text{if } \beta \lesssim \eta^{-1/2}, \\
            \Theta\bigl(\beta^{1/2} \, \eta^{1/4}\bigr) & \text{if } \eta^{-1/2} \lesssim \beta \lesssim \eta^{-3/2}, \\
            \Theta\bigl(\beta^{2/3} \, \eta^{1/2}\bigr) & \text{if } \eta^{-3/2} \lesssim \beta, \\
        \end{cases}
    \end{equation}
    which yields
    \begin{equation}\label{eqn:optimal_ip}
        \#{\rm IP}
        =
        \begin{cases}
            \mathcal{O}\bigl(\eta^{-3/2}\bigr) & \text{if } \beta \lesssim \eta^{-1/2}, \\
            \mathcal{O}\bigl(\beta^{1/2} \, \eta^{-5/4}\bigr) & \text{if } \eta^{-1/2} \lesssim \beta \lesssim \eta^{-3/2}, \\
            \mathcal{O}\bigl(\beta^{2/3} \, \eta^{-1}\bigr) & \text{if } \eta^{-3/2} \lesssim \beta. \\
        \end{cases}
    \end{equation}
\end{theorem}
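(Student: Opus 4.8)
The plan is to account separately for the cost of each line of Algorithm \ref{alg:local_conductivity_via_poles} in terms of inner products (IP), using the Chebyshev-coefficient estimates of Theorem \ref{thm:conductivity_coeffs_bound} together with the index-set scaling formula \eqref{eqn:K_scaling} / \eqref{eqn:truncated_indices}.

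\emph{Step 1: the remainder term.}
Line \ref{alg:local_conductivity_via_poles:R} applies Algorithm \ref{alg:local_conductivity} to $R_{k,\beta,E_F}(E_1,E_2)/(E_1-E_2+\omega+\iota\eta)$. By Theorem \ref{thm:pole_expansion}, $R_{k,\beta,E_F}$ is analytic on the biellipse $E(\alpha_{k,\beta,E_F})^2$, and one checks from the definition of $\alpha_{k,\beta,E_F}$ that for $k \gtrsim \beta \eta^{1/2}$ the poles $E_F \pm (2k+1)\pi\iota/\beta$ lie outside the $\Theta(\eta^{1/2})$-ellipse constrained by $S_\mathrm{relax}$, so the product $R_{k,\beta,E_F}/(E_1-E_2+\omega+\iota\eta)$ is \emph{relaxation-constrained} in the sense of Theorem \ref{thm:conductivity_coeffs_bound}; hence $\alpha_\mathrm{diag} = \Theta(\eta)$, $\alpha_\mathrm{anti} = \Theta(\eta^{1/2})$ and \eqref{eqn:K_scaling} gives $\mathcal{O}(\eta^{-3/2})$ IP. For $k \lesssim \beta\eta^{1/2}$ the remainder is only mixed- or temperature-constrained, with $\alpha_\mathrm{diag} = \Theta(\beta_k^{-1})$ where $\beta_k := \beta/(2k+1)$ is the effective inverse temperature of the truncated expansion; substituting $\alpha_\mathrm{anti} = \mathcal{O}(\beta_k^{-1})$ (mixed) or $0$ (temperature) into \eqref{eqn:K_scaling} produces $\mathcal{O}(\beta\eta^{-1}/k)$ and $\mathcal{O}(\beta^2/k^2)$ respectively, reproducing the three branches of the non-$k\eta^{-3/2}$ term in \eqref{eqn:pole_expansion_cost}.

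\emph{Step 2: the pole terms.}
Line \ref{alg:local_conductivity_via_poles:frac} loops over the $|Z_k| = \Theta(k)$ poles $z$. For each, the weighted Chebyshev scheme \eqref{eqn:pole_matching} has coefficients $c(z)_{k_1k_2}$ equal to the Chebyshev coefficients of the \emph{relaxation-constrained} function $1/(E_1-E_2+\omega+\iota\eta)$, so each $|K_z| = \mathcal{O}(\eta^{-3/2})$ by Theorem \ref{thm:conductivity_coeffs_bound} and \eqref{eqn:K_scaling}; summing over $z$ gives the $\mathcal{O}(k\,\eta^{-3/2})$ term. The only subtlety is that Algorithm \ref{alg:local_conductivity} now runs with the weighted polynomials $(E-z)^{-1}T_{k'}(E)$, i.e.\ Lines \ref{alg:local_conductivity:v}--\ref{alg:local_conductivity:w} require the vectors $(H_\mathrm{loc}-zI)^{-1}T_{k'}(H_\mathrm{loc})\,|e_{0\alpha}\rangle$; under the stated assumption (Remark~\ref{rem:linear_solvers}) each such linear solve costs $\lesssim \eta^{-3/2}$ IP, so these do not increase the asymptotic count. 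Adding Steps 1 and 2 yields \eqref{eqn:pole_expansion_cost}.

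\emph{Step 3: optimization over $k$.}
Finally one minimizes the right-hand side of \eqref{eqn:pole_expansion_cost} over $k \in \mathbb{N}$. This is a routine piecewise calculus problem: in the regime $\beta \lesssim \eta^{-1/2}$ the remainder is already relaxation-constrained at $k = \Theta(1)$ and the total is $\mathcal{O}(\eta^{-3/2})$; for $\eta^{-1/2}\lesssim\beta$ one balances the increasing term $k\,\eta^{-3/2}$ against the decreasing term, which is $\mathcal{O}(\beta\eta^{-1}/k)$ in the window $\beta\eta \lesssim k \lesssim \beta\eta^{1/2}$ (optimal $k = \Theta(\beta^{1/2}\eta^{1/4})$, cost $\mathcal{O}(\beta^{1/2}\eta^{-5/4})$) and $\mathcal{O}(\beta^2/k^2)$ for $k \lesssim \beta\eta$ (optimal $k = \Theta(\beta^{2/3}\eta^{1/2})$, cost $\mathcal{O}(\beta^{2/3}\eta^{-1})$), with the crossover between these two at $\beta = \Theta(\eta^{-3/2})$; this gives \eqref{eqn:optimal_k} and \eqref{eqn:optimal_ip}. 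One must also verify that the optimal $k$ indeed lies in the range of validity of the branch used — e.g.\ checking $\beta^{1/2}\eta^{1/4} \lesssim \beta\eta^{1/2} \Leftrightarrow \eta^{-1/2} \lesssim \beta$ — which is straightforward.

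\emph{Main obstacle.}
The routine bookkeeping (the $k$-optimization, the IP counts per line) is not where the difficulty lies; the crux is Step 1, namely identifying precisely when the remainder $R_{k,\beta,E_F}/(E_1-E_2+\omega+\iota\eta)$ passes from temperature- to mixed- to relaxation-constrained as $k$ grows. This amounts to tracking how the effective Fermi-Dirac ellipse $E(\alpha_{k,\beta,E_F})$ widens as the inner poles $Z_k$ are removed, and comparing it against the $S_\mathrm{relax}$-constraint; the thresholds $k \sim \beta\eta$ and $k \sim \beta\eta^{1/2}$ in \eqref{eqn:pole_expansion_cost} are exactly the values of $k$ at which the relevant ellipse from Theorem \ref{thm:conductivity_coeffs_bound} switches its binding constraint, and establishing this rigorously requires the analyticity statement of Theorem \ref{thm:pole_expansion} (that $R_{k,\beta,E_F}$ genuinely extends to $E(\alpha_{k,\beta,E_F})^2$) together with the exact ellipse formulae from Appendix \ref{sec:proofs_numerics}.
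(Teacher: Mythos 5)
Your proposal is correct and follows essentially the same route as the paper's own (much terser) proof: attribute the $\mathcal{O}(k\,\eta^{-3/2})$ term to the for-loop over the relaxation-constrained pole terms, attribute the case-dependent term to Line~\ref{alg:local_conductivity_via_poles:R} by applying Theorem~\ref{thm:conductivity_coeffs_bound} to the remainder with effective inverse temperature $\beta/(2k+1)$, and then minimize by balancing the increasing and decreasing terms in $k$. Your Step 1 and Step 3 simply spell out the details the paper compresses into ``It follows from Theorem~\ref{thm:conductivity_coeffs_bound}\dots'' and ``one can readily verify,'' and your threshold and balancing computations check out.
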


\begin{proof}
    It follows from Theorem \ref{thm:conductivity_coeffs_bound} that the first term in \eqref{eqn:pole_expansion_cost} describes the cost of the for-loop in Algorithm \ref{alg:local_conductivity_via_poles} while the second term describes the cost of Line~\ref{alg:local_conductivity_via_poles:R}.
    Since the first term is strictly increasing while the second is decreasing, the sum of the two $\mathcal{O}$-terms is minimized by the unique $k$ such that the first term equals the second term, which one can readily verify to be given by \eqref{eqn:optimal_k}.
\end{proof}

We note that Algorithm \ref{alg:local_conductivity_via_poles} reduces to Algorithm \ref{alg:local_conductivity} if $\beta \lesssim \eta^{-1/2},$
but scales better than Algorithm \ref{alg:local_conductivity} for larger values of $\beta$,
e.g.,\ for $\beta \sim \eta^{-1} \sim \chi$ we have $\#{\rm IP} = \mathcal{O}\bigl(\chi^{7/4}\bigr)$ in the case of Algorithm \ref{alg:local_conductivity_via_poles}
while $\#{\rm IP} = \mathcal{O}\bigl(\chi^2\bigr)$ for Algorithm \ref{alg:local_conductivity}.
The first term in \eqref{eqn:pole_expansion} further reduces to $\mathcal{O}\bigl(k \, \eta^{-1.1}\bigr)$
if we assume the improved $\mathcal{O}\bigl(\eta^{-1.1}\bigr)$-scaling for the number of significant Chebyshev coefficients of $f(E_1,E_2) = \frac{1}{E_1 - E_2 + \omega + \iota\eta}$ suggested by Figure \ref{fig:ncoeffs}.
In this case, the optimal choice of $k$ and the corresponding costs are
\begin{equation}\label{eqn:pole_expansion_asymptotics}
k =
\begin{cases}
    \Theta(1) \\
    \Theta\bigl(\beta^{1/2} \, \eta^{0.05}\bigr) \\
    \Theta\bigl(\beta^{2/3} \, \eta^{0.37}\bigr) \\
\end{cases}
\,\,\text{and}\quad
\#{\rm IP} = \begin{cases}
\mathcal{O}\bigl(\eta^{-1.1}\bigr) & \text{if } \beta \lesssim \eta^{-1/2}, \\
\mathcal{O}\bigl(\beta^{1/2} \, \eta^{-1.05}\bigr) & \text{if } \eta^{-1/2} \lesssim \beta \lesssim \eta^{-3/2}, \\
\mathcal{O}\bigl(\beta^{2/3} \, \eta^{-0.73}\bigr) & \text{if } \eta^{-3/2} \lesssim \beta. \\
\end{cases}
\end{equation}
These predictions are compared against numerical results in Figure~\ref{fig:rational_ncoeffs} where we observe good qualitative agreement between the theory and the experiment.
% We conjecture that this is due to non-monoto
For $\beta \sim \eta^{-1} \sim \chi$, equation \eqref{eqn:pole_expansion_asymptotics} yields $\#{\rm IP} = \mathcal{O}\bigl(\chi^{1.55}\bigr)$
which is only marginally more expensive than the $\mathcal{O}\bigl(\chi^{1.5}\bigr)$ cost of Algorithm~\ref{alg:local_conductivity} in the case of relaxation-constrained parameters $\beta^2 \sim \eta^{-1} \sim \chi$. This is empirically demonstrated by the ``rational'' line in Figure~\ref{fig:ncoeffs}.

\begin{figure}
    \subfloat[Number of coefficients]{\label{fig:ncoeffs_rational}\includegraphics{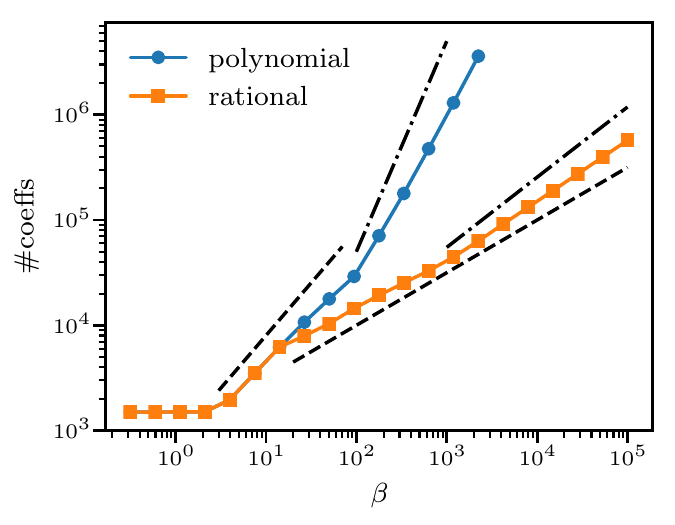}}
    \subfloat[Number of removed poles]{\label{fig:ncoeffs_rational_npoles}\includegraphics{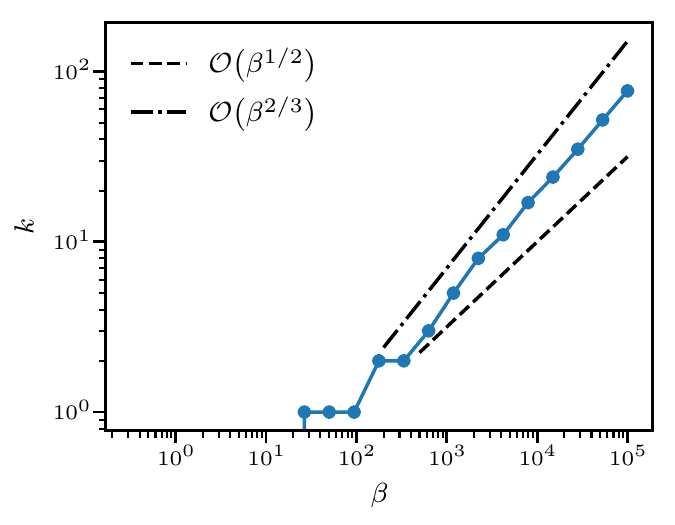}}
    \caption{
    \protect\subref{fig:ncoeffs_rational}
    Number of normalized Chebyshev coefficients $\hat c_{k_1k_2} := {|c_{k_1k_2}|}/{|c_{00}|}$ larger than $10^{-3}$ for $F_\zeta$ with $\eta = 0.06$ and $E_F = \omega = 0$.
    The ``polynomial'' line counts the number of significant coefficients in the Chebyshev expansion from \eqref{eqn:truncated_chebyshev_series}, while the ``rational'' line counts the sum of the number of significant Chebyshev coefficients of all the terms in the pole expansion from \eqref{eqn:pole_expansion}.
    The dashed lines denote $\mathcal{O}\bigl(\beta\bigr)$ and $\mathcal{O}\bigl(\beta^{1/2}\bigr)$, respectively, and the dash-dotted lines denote $\mathcal{O}\bigl(\beta^{2}\bigr)$ and $\mathcal{O}\bigl(\beta^{2/3}\bigr)$, respectively, cf.~\eqref{eqn:pole_expansion_asymptotics}.
    \protect\subref{fig:ncoeffs_rational_npoles}
    Index $k$ for the set of poles $Z_k$ from Theorem~\ref{thm:pole_expansion}.
    This number was determined by increasing $k$ starting from $0$ until the number of coefficients reported in \protect\subref{fig:ncoeffs_rational} stopped decreasing.
    }
    \label{fig:rational_ncoeffs}
\end{figure}

\begin{remark}\label{rem:pole_grouping}
    Instead of running Algorithm \ref{alg:local_conductivity} for each pole $z \in Z_{k,\beta,E_F}$ separately,
    we can apply Algorithm \ref{alg:local_conductivity} to a group of poles $\tilde Z \subset Z_{k,\beta,E_F}$ if we weigh the Chebyshev polynomials $T_k(E)$ with $q(E) := \prod_{z \in \tilde Z} (E-z)^{-1}$,
    and the same idea can also be used to improve the concentration of the Chebyshev coefficients of $R_{k,\beta,E_F}$.
    Grouping the poles in this manner reduces the computational cost of Algorithm \ref{alg:local_conductivity_via_poles}, but amplifies the round-off errors\footnote{
        We focus on rounding errors here for the sake of simplicity, but we will see in Subsection~\ref{ssec:pole_expansion_example} that a highly unbalanced $q$-factor also requires smaller approximation tolerances which in turn lead to larger runtimes.
    } by a factor $r := \max_{E \in [-1,1]} |q(E)| / \min_{E \in [-1,1]} |q(E)|$ such that the result is fully dominated by round-off errors if this ratio exceeds $10^{16}$.
    Since $|q(E_F)| \sim \beta^{|\tilde Z|}$ while $|q(\pm 1)| \sim 1$, this means that
    we have to keep the group size rather small (e.g.\ $|\tilde Z| \leq 4$ for $\beta = 10^4$) to maintain numerical stability.
    We therefore conclude that grouping poles reduces the prefactor, but does not change the asymptotics of the computational cost of Algorithm \ref{alg:local_conductivity_via_poles}.
\end{remark}

\begin{remark}\label{rem:flop_counts}
{
The runtime estimates \eqref{eqn:optimal_ip} and \eqref{eqn:pole_expansion_asymptotics} are formulated in terms of number of inner products and must therefore be multiplied by the length $|\Omega_r| = \mathcal{O}(r^2)$ of these inner products to obtain runtime estimates in terms of number of floating-point operations.
According to Theorem \ref{thm:main}, we must choose
}
\[
r
=
\begin{cases}
    \Theta\bigl(\eta^{-1}\bigr) & \text{if $\zeta$ is relaxation- or mixed-constrained}, \\
    \Theta\bigl(\beta\bigr) & \text{if $\zeta$ is temperature-constrained} \\
\end{cases}
\]
to guarantee an error in $\sigma_\ell^r[b]$ independent of $\zeta$; hence we conclude that Algorithm \ref{alg:local_conductivity_via_poles} requires
\begin{equation}\label{eqn:flop_estimate}
    {
    \left\{
    \begin{aligned}
        &\mathcal{O}\bigl(\eta^{-3.1}\bigr) && \text{if } \beta \lesssim \eta^{-1/2} \\
        &\mathcal{O}\bigl(\beta^{1/2} \, \eta^{-3.05}\bigr) && \text{if } \eta^{-1/2} \lesssim \beta \lesssim \eta^{-1} \\
        &\mathcal{O}\bigl(\beta^{5/2} \, \eta^{-1.05}\bigr) && \text{if } \eta^{-1} \lesssim \beta \lesssim \eta^{-3/2} \\
        &\mathcal{O}\bigl(\beta^{8/3} \, \eta^{-0.73}\bigr) && \text{if } \eta^{-3/2} \lesssim \beta \\
    \end{aligned}
    \right\}
    =
    \mathcal{O}\bigl((\beta+\eta^{-1})^{3.55}\bigr)
    }
\end{equation}
floating-point operations assuming the empirically observed scaling of the number of coefficients reported in \eqref{eqn:pole_expansion_asymptotics}.
In contrast, computing the eigendecomposition of $H_\mathrm{loc}$ and evaluating \eqref{e:local_conductivity} requires
\[
\mathcal{O}(|\omega_r|^3)
=
\mathcal{O}(r^6)
=
\begin{cases}
    \mathcal{O}\bigl(\eta^{-6}\bigr) & \text{if } \beta \lesssim \eta^{-1}, \\
    \mathcal{O}\bigl(\beta^6\bigr) & \text{if } \eta^{-1} \lesssim \beta \\
\end{cases}
\]
floating-point operations and hence scales with a power which is close to twice the one of our proposed algorithm.
\end{remark}

\begin{remark}\label{rem:linear_solvers}
    Solving a linear system $(H_\mathrm{loc} - z)^{-1} \, v$ associated with the two-dimensional configuration $\Omega_r$ using a direct solver takes
    \[
        \mathcal{O}\bigl( |\Omega_r|^{3/2}\bigr)
        =
        \mathcal{O}\bigl(r^3\bigr)
        =
        \begin{cases}
            \mathcal{O}\bigl(\eta^{-3}\bigr) & \text{if $\zeta$ is relaxation- or mixed-constrained}, \\
            \mathcal{O}\bigl(\beta^{3}\bigr) & \text{if $\zeta$ is temperature-constrained}, \\
        \end{cases}
    \]
    floating-point operations (see e.g.\ \cite[\S 7.6]{Dav06} regarding the runtime of direct sparse solvers).
    In comparison, approximating $p(E) \approx 1/(E - z)$ and evaluating $p(H_\mathrm{loc}) \approx (H_\mathrm{loc} - z)^{-1}$ (or equivalently, using an iterative linear solver like conjugate gradients) takes
    \[
    \mathcal{O}\bigl(\mathrm{degree}(p) \, |\Omega_r|\bigr)
    =
    \begin{cases}
        \mathcal{O}\bigl(\beta \, \eta^{-2}\bigr) & \text{if $\zeta$ is relaxation- or mixed-constrained}, \\
        \mathcal{O}\bigl(\beta^{3}\bigr) & \text{if $\zeta$ is temperature-constrained}, \\
    \end{cases}
    \]
    floating-point operations, where we used that $\mathrm{degree}(p) = \mathcal{O}\bigl(|\imag(z)|^{-1}\bigr) = \mathcal{O}\bigl(\beta\bigr)$ according to fundamental results in approximation theory, see e.g.\ \cite{Tre13}.
    We hence conclude that iterative solvers scale slightly better than direct ones in the relaxation- and mixed-constrained cases, and they scale as well as direct ones in the temperature-constrained case.
\end{remark}

\begin{remark}
    The cost of computing $\mathcal{O}\bigl(\eta^{-3/2}\bigr)$ inner products is
    \[
    \mathcal{O}\bigl(\eta^{-3/2} \, |\Omega_r|\bigr)
    =
    \begin{cases}
        \mathcal{O}\bigl(\eta^{-{7/2}}\bigr) & \text{if $\zeta$ is relaxation- or mixed-constrained}, \\
        \mathcal{O}\bigl(\eta^{-3/2} \, \beta^{2}\bigr) & \text{if $\zeta$ is temperature-constrained}, \\
    \end{cases}
    \]
    floating-point operations.
    Comparing this result against the findings of Remark \ref{rem:linear_solvers}, we conclude that the assumption in Theorem~\ref{thm:pole_expansion_cost} is satisfied if $\beta \lesssim \eta^{-3/2}$.
\end{remark}

\subsection{Remarks regarding implementation}\label{ssec:implementation}

We conclude this section by pointing out two features of the proposed algorithms which are relevant when one considers their practical implementation.

\subsubsection{Memory requirements}

Algorithm~\ref{alg:local_conductivity} as formulated above suggests that we precompute and store both the vectors $|v_{k_1}\rangle$ for all $k_1 \in K_1$ and $|w_{k_2}\rangle$ for all $k_2 \in K_2$.
This requires more memory than necessary since we can rewrite the algorithm as follows.

\begin{algorithm}[H]
    \caption{Memory-optimised version of Algorithm~\ref{alg:local_conductivity}}
    \label{alg:local_conductivity_memory}
    \begin{algorithmic}[1]
        \State Precompute $|v_{k_1}\rangle$ for all $k_1 \in K_1$ as in Algorithm~\ref{alg:local_conductivity}.
        \For{$k_2 \in K_2$ in ascending order}
            \State Evaluate $|w_{k_2}\rangle$ using the recurrence relation \eqref{eqn:chebrecursion}.
            \State Discard $|w_{k_2-2}\rangle$ as it will no longer be needed.
            \State Compute the inner products $\langle v_{k_1} | w_{k_2} \rangle$ for all $k_1$ such that $(k_1,k_2) \in K$, and
            \Statex \hspace{\algorithmicindent}
            accumulate the results as in Algorithm~\ref{alg:local_conductivity}.
        \EndFor
    \end{algorithmic}
\end{algorithm}

Furthermore, even caching all the vectors $|v_{k_1}\rangle$ is not needed if the function to be evaluated is relaxation-constrained: it follows from the wedge-like shape of the Chebyshev coefficients of such functions shown in Figure~\ref{fig:coeffs_relaxation} that in every iteration of the loop in Algorithm~\ref{alg:local_conductivity_memory}, we only need vectors $|v_{k_1}\rangle$ with index $k_1$ within some fixed distance from $k_2$.
The vectors $|v_{k_1}\rangle$ can hence be computed and discarded on the fly just like $|w_{k_2}\rangle$, albeit with a larger lag between computing and discarding.
Quantitatively, this reduces the memory requirements from $\mathcal{O}\bigl(\eta^{-1} \, |\Omega_r|\bigr)$ for both Algorithms \ref{alg:local_conductivity} and \ref{alg:local_conductivity_memory}
to $\mathcal{O}\bigl(\eta^{-1/2} \, |\Omega_r|\bigr)$ for the final version described above, assuming the function to be evaluated is relaxation-constrained.

\subsubsection{Choosing the approximation scheme}\label{ssec:approximation_scheme}

Algorithms~\ref{alg:local_conductivity} and \ref{alg:local_conductivity_via_poles} involve three basic operations, namely matrix-vector products, inner products and linear system solves, and a fundamental assumption in their derivation was that matrix-vector and inner products are approximately equally expensive and linear system solves are not significantly more expensive than that (see Theorem~\ref{thm:pole_expansion_cost} for the precise condition).
The former assumption is true in the sense that both matrix-vector and inner products scale linearly in the matrix size $m$, but their prefactors are very different: the inner product $\langle w \,|\, v \rangle$ takes $2m-1$ floating-point operations, while the cost of the matrix-vector product $H \, |v\rangle$ is approximately equal to twice the number of nonzeros in $H$.
Even in the simplest case of a single triangular lattice and a tight-binding Hamiltonian $H$ involving only nearest-neighbour terms and $s$ and $p$ orbitals, the number of nonzeros per column of $H$ is about 6 (number of neighbours) times 4 (number of orbitals), hence the cost of evaluating $H \,|v\rangle$ is approximately $48m$ which is 24 times more expensive than the inner product.
Similarly, the assumption regarding the costs of linear system solves holds true in the asymptotic sense as discussed in Remark~\ref{rem:linear_solvers}, but the situation may look very different once we include the prefactors.
This observation has two practical implications.
\begin{itemize}
    \item Rather than choosing the number of removed poles $k$ in Theorem~\ref{thm:pole_expansion} solely to minimise the number of coefficients, one should benchmark the runtimes of inner products, matrix-vector products and linear system solves and choose the $k$ which yields the smallest overall runtime.

    \item Fairly small values of $\eta$ are required before the wedge shown in Figure~\ref{fig:coeffs_relaxation} becomes thin enough that the savings due to a smaller number of inner products make a significant difference compared to the cost of the matrix-vector products, and {very large values of $\beta$ are required for the reduced number of inner products to compensate for the additional matrix-vector products and linear systems solves in Algorithm~\ref{alg:local_conductivity_via_poles}.}
\end{itemize}

% !TEX root = Conductivity_Bilayer.tex

\section{Numerical Demonstration}\label{sec:experiments}
This section demonstrates the theory developed in Sections \ref{sec:main} and \ref{sec:numerics} by applying it to a model bilayer system defined as follows.

\emph{Geometry.}
We consider a hexagonal bilayer system $\mathcal{R}_1 \cup \mathcal{R}_2$ with a relative twist angle of $2.5 ^\circ$ as shown in Figure \ref{fig:moire}.
The distance between the two layers is equal to the nearest-neighbor distance within each layer.
For ease of implementation, the projection onto a finite subsystem is performed using a parallelogrammatic cut-out
\[
\Omega_r
=
\mathop{\bigcup}_{\ell = 1}^2 \bigl\{A_\ell \, m : m \in \{-r, \ldots, r\}^2\bigr\}
\]
rather than the circular cut-out as in \eqref{e:Omega_r}.

\emph{Hamiltonian.}
We construct a model Hamiltonian $H$ for this system in two steps.
\begin{itemize}
    \item Define the matrix
    \begin{equation}\label{eqn:coupling_function}
        \tilde H_{R,R'} =
        h\bigl(|R - R'|\bigr) :=
        \begin{cases}
            \exp\left(-\tfrac{|R - R'|^2}{r_\mathrm{cut}^2 - |R - R'|^2}\right) & \text{if } |R - R'| < r_\mathrm{cut}, \\
            0                                                                   & \text{otherwise},
        \end{cases}
    \end{equation}
    where $R$ and $R'$ range over all lattice sites in $\mathcal{R}_1 \cup \mathcal{R}_2$ and
    \[
    r_\mathrm{cut} = \sqrt{3} \times \text{(nearest-neighbor distance)}
    \]
    denotes the second-nearest-neighbor distance in the lattices.
    Note that this implies that if $R,R'$ are sites on the same lattice, then
    \[
    \tilde H_{R,R'} \neq 0
    \quad\iff\quad
    R = R' \text{ or } R,R' \text{ are nearest neighbors}
    .
    \]

    \item Set $H$ to be a shifted and scaled copy of $\tilde H$ such that the spectrum of $H$ is contained in $[-1,1]$, i.e.\
    \[
    H
    =
    \tfrac{2}{\tilde E_\mathrm{max} - \tilde E_\mathrm{min}}
    \,
    \Bigl( \tilde H - \tfrac{\tilde E_\mathrm{max} + \tilde E_\mathrm{min}}{2} \, I \Bigr)
    \]
    where $\tilde E_\mathrm{min}$ and $\tilde E_\mathrm{max}$ denote lower and upper bounds, respectively, on the spectrum of $\tilde H$.
\end{itemize}

All numerical experiments in this section have been performed on a single core of an Intel Core i7-8550 CPU (1.8 GHz base frequency, 4 GHz turbo boost) using the Julia programming language \cite{BEKS17}.

\subsection{Convergence with respect to the localization radius $r$}\label{ssec:r_convergence}

\begin{figure}
    \includegraphics{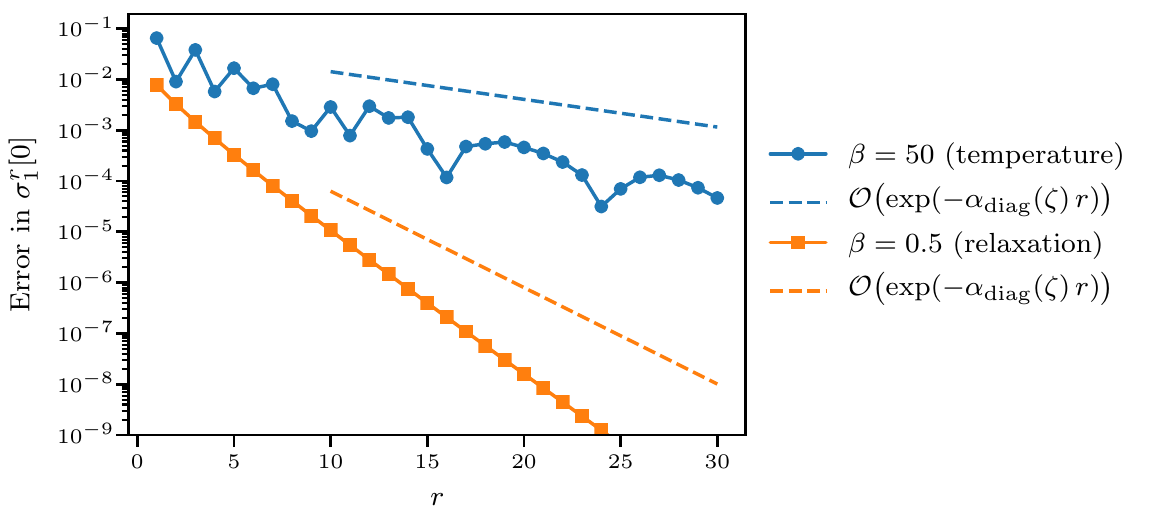}
    \caption{
    Convergence of $\sigma_1^r[0]$ as a function of $r$ for $E_F = \omega = 0$, $\eta = 0.5$ and $\beta$ as indicated.
    Errors were measured by comparing against the result for $r = 50$.
    }
    \label{fig:r_convergence}
\end{figure}

We have seen in Theorem \ref{thm:main} that the local conductivity $\sigma_\ell^r[b]$ converges exponentially, as $r \to \infty$, with exponent proportional to $\min\{\beta^{-1},\eta\}$.
Since the particular Hamiltonian $H$ we consider involves only nearest-neighbor interactions, this statement can be further sharpened. The approximate local conductivity $\tilde \sigma_\ell^r[b]$ introduced in \eqref{eqn:approximate_local_conductivity} is now independent of $r$, as long as
\[
r \geq \max_{(k_1,k_2) \in K} \tfrac{1}{2} \, (k_1 + k_2 + 2).
\]
Hence, $\tilde \sigma_\ell^r[b]$ equals the exact local conductivity $\sigma_\ell^\infty[b]$ in the thermodynamic limit $r \to \infty$ up to truncation of the Chebyshev series.
Combining this observation with the decay of the Chebyshev coefficients of $F_\zeta$, asserted in Theorem \ref{thm:conductivity_coeffs_bound}, yields
\begin{equation}\label{eqn:local_conductivity_convergence}
\bigl|\sigma_\ell^r[b] - \sigma_\ell^\infty[b]\bigr|
\leq
C \, \exp\bigl( - \alpha_\mathrm{diag}(\zeta) \, r\bigr)
\end{equation}
for some $C > 0$ independent of $r$.
This theoretical finding is numerically confirmed in Figure \ref{fig:r_convergence}, which demonstrates that $\sigma_\ell^r[b]$ indeed converges exponentially with a rate of convergence upper bound by $\alpha_\mathrm{diag}(\zeta)$ with reasonable but not perfect tightness.

The above argument for relating localization to polynomial approximation is based on closely related arguments from \cite{BBR13,DMS84}.

\subsection{Scaling for relaxation-constrained parameters}\label{ssec:eta_scaling}

\begin{figure}
    \includegraphics{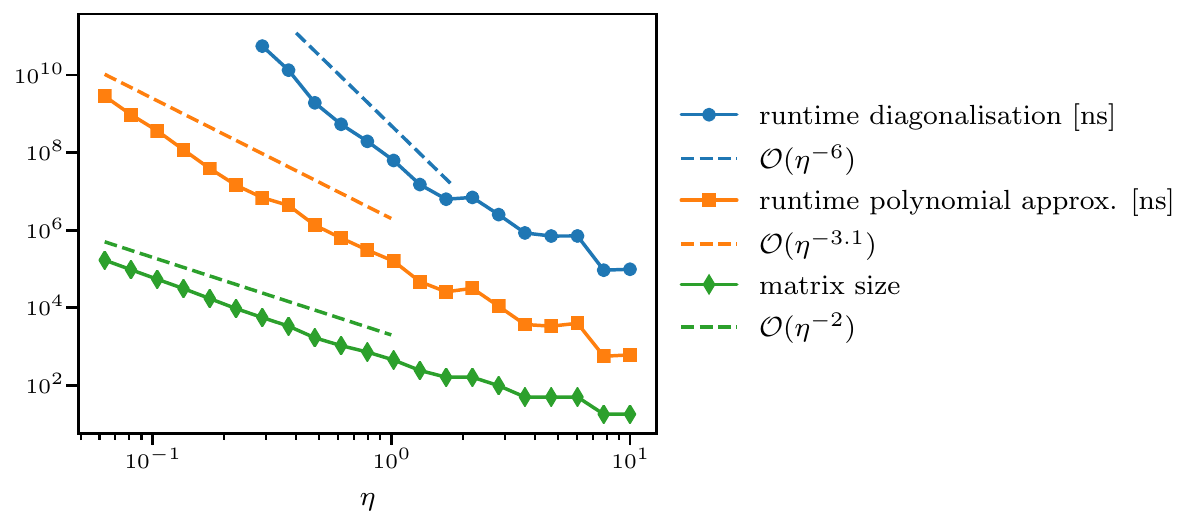}
    \caption{
        Runtime of local conductivity calculations via diagonalisation of $H$ and polynomial approximation of $F_\zeta$ (Algorithm \ref{alg:local_conductivity}), respectively, for $\beta = 0.1$, $E_F = \omega = 0$ and varying $\eta$.
        The truncated Chebyshev expansion $\tilde F_\zeta(E_1,E_2)$ in \eqref{eqn:truncated_chebyshev_series} has been determined by computing all Chebyshev coefficients $c_{k_1k_2}$ for $k_1,k_2 \in \{0, \ldots 500\}$ and then dropping the coefficients of smallest absolute values until the sum of the dropped coefficients reaches $10^{-3}$.
        The matrix size $|\Omega_r|$ is determined by choosing $r = \max_{(k_1,k_2) \in K} \tfrac{1}{2} \, (k_1 + k_2 + 2)$, cf.\ Subsection~\ref{ssec:eta_scaling}.
    }
    \label{fig:eta_scaling}
\end{figure}

The discussion in Subsection \ref{ssec:r_convergence} suggests to choose the localization radius $r$ by determining a truncated Chebyshev series approximation $\tilde F_\zeta$ of sufficient accuracy and then setting
\[
r = \max_{(k_1,k_2) \in K} \tfrac{1}{2} \, (k_1 + k_2 + 2)
\]
where $K$ denotes the set of indices in $\tilde F_\zeta$, cf.\ \eqref{eqn:truncated_chebyshev_series}.
Figure \ref{fig:eta_scaling} demonstrates that this choice of $r$ leads to fairly large matrix sizes $|\Omega_r|$ and hence the diagonalisation algorithm is not competitive with our Algorithm~\ref{alg:local_conductivity} for any of the parameters $\zeta$ considered in Figure \ref{fig:eta_scaling}.
However, we remark that unlike diagonalisation, Algorithm~\ref{alg:local_conductivity} benefits from the excellent sparsity of the Hamiltonian $H$ and the relaxed error tolerance $\varepsilon = 10^{-3}$ considered in this example. The relative performance of Algorithm~\ref{alg:local_conductivity} may therefore be somewhat worse for more realistic Hamiltonians.

\subsection{Pole expansion for temperature-constrained parameters}\label{ssec:pole_expansion_example}

\def\TT{\rule{0pt}{2.6ex}}       % Top strut
\def\BB{\rule[-1.2ex]{0pt}{0pt}} % Bottom strut

\begin{table}
    \subfloat[$\beta = 20$, $|\Omega_r| = 13\,122$]{
    \begin{tabular}{|c|c|c|c|c|}
        \hline
        \multicolumn{2}{|c|}{} & polynomial & pole expansion & grouped pole expansion
        \TT\BB \\ \hline\hline
        \multirow{2}{*}{matvec} &
        count & 225 & 509 & 396
        \TT \\
        & time [s] & 0.056 & 0.155 & 0.124
        \BB \\ \hline
        \multirow{2}{*}{inner prod} &
        count & 2680 & 602 & 229
        \TT \\
        & time [s] & 0.015 & 0.003 & 0.001
        \BB \\ \hline\hline
        \multicolumn{2}{|r|}{Total time [s]}
        & 0.072 & 0.159 & 0.125
        \TT\BB \\ \hline
    \end{tabular}
    }

    \subfloat[$\beta = 30$, $|\Omega_r| = 30\,258$]{
    \begin{tabular}{|c|c|c|c|c|}
        \hline
        \multicolumn{2}{|c|}{} & polynomial & pole expansion & grouped pole expansion
        \TT\BB \\ \hline\hline
        \multirow{2}{*}{matvec} &
        count & 348 & 772 & 741
        \TT \\
        & time [s] & 0.338 & 0.749 & 0.798
        \BB \\ \hline
        \multirow{2}{*}{inner prod} &
        count & 6410 & 739 & 468
        \TT \\
        & time [s] & 0.182 & 0.014 & 0.007
        \BB \\ \hline\hline
        \multicolumn{2}{|r|}{Total time [s]}
        & 0.520 & 0.763 & 0.806
        \TT\BB \\ \hline
    \end{tabular}
    }

    \vspace{0.5em}

    \caption{
    Runtimes of Algorithm~\ref{alg:local_conductivity} (polynomial approximation), Algorithm~\ref{alg:local_conductivity_via_poles} (pole expansion) and Algorithm~\ref{alg:local_conductivity_via_poles} with all poles grouped into a single term as described in Remark~\ref{rem:pole_grouping}, for $\beta$ as indicated, $E_F = -0.2$, $\eta = 1$, $\omega = 0$ and number of removed poles $k = 3$.
    The matrix sizes $|\Omega_r|$ have been determined as in Figure~\ref{fig:eta_scaling}.
    All linear system solves $\bigl(\prod_{k} (H_\mathrm{loc} - z_k)^{-1} \bigr)\, v$ have been perform using polynomial approximation (cf.\ Remark~\ref{rem:linear_solvers}), and the corresponding matrix-vector products are included in the matvec count reported above.
    }
    \label{tbl:pole_expansion}
\end{table}

Table~\ref{tbl:pole_expansion} demonstrates the effect of accelerating the polynomial-approximation-based Algorithm~\ref{alg:local_conductivity} using pole expansion as described in Subsection~\ref{ssec:pole_expansion}.
We observe the following.
\begin{itemize}
    \item The additive approximation scheme described in Algorithm~\ref{alg:local_conductivity_via_poles} significantly reduces the inner products count compared to the polynomial algorithm, and grouping poles as described in Remark~\ref{rem:pole_grouping} reduces the inner product count even further.

    \item The runtimes of all three algorithms are dominated by the matrix-vector (matvec) products.
    The matvec counts are significantly larger for the two rational algorithms; hence their overall runtimes are larger than that of the polynomial algorithm.
\end{itemize}
The larger number of matrix-vector products in the rational algorithms is due to several factors.
\begin{enumerate}
    \item Pole expansion without grouping (Algorithm~\ref{alg:local_conductivity_via_poles}) requires running Algorithm~\ref{alg:local_conductivity} multiple times and hence incurs more matrix-vector products from Lines \ref{alg:local_conductivity:v} and \ref{alg:local_conductivity:w} of Algorithm~\ref{alg:local_conductivity}.

    \item\label{itm:multiple_vw}The rational algorithms require solving sequences of linear systems $\bigl(\prod_k (H - z_k)^{-1} \bigr) \, v$, which we evaluate by approximating $q(E) \approx \prod_k (E - z_k)^{-1}$ and replacing $\bigl(\prod_k (H - z_k)^{-1} \bigr) \, v \to q(H) \, v$.

    \item\label{itm:large_q}Determining polynomials $p(E_1,E_2)$ and $q(E)$ such that
    \[
        p(E_1,E_2) \, q(E_1) \, q(E_2) \approx F_\zeta(E_1,E_2)
    \]
    requires stricter tolerances and hence larger degrees due to the multiplications (cf.\ Remark~\ref{rem:pole_grouping}).
\end{enumerate}
Item~\ref{itm:multiple_vw} explains why the matvec count is higher for the ungrouped pole expansion compared to the grouped pole expansion for $\beta = 20$, while Item~\ref{itm:large_q} explains why the matvec count for grouped pole expansion catches up with that of ungrouped pole expansion for larger values of $\beta$ where the ratio $\bigl(\max_{E} q(E)\bigr) / \bigl(\min_{E} q(E)\bigr)$ is larger.

These findings suggest that the rational approximation techniques from Subsection~\ref{ssec:pole_expansion} require very large values of $\beta$ to outperform the polynomial algorithm from Subsection~\ref{ssec:algorithm}.
However, we also note that the performance of the rational algorithms can be improved by using better approximation and evaluation schemes.
%, which will be the topic of future research.

\subsection{Convergence of integral over configurations}\label{ssec:quad_convergence}

\begin{figure}
    \includegraphics{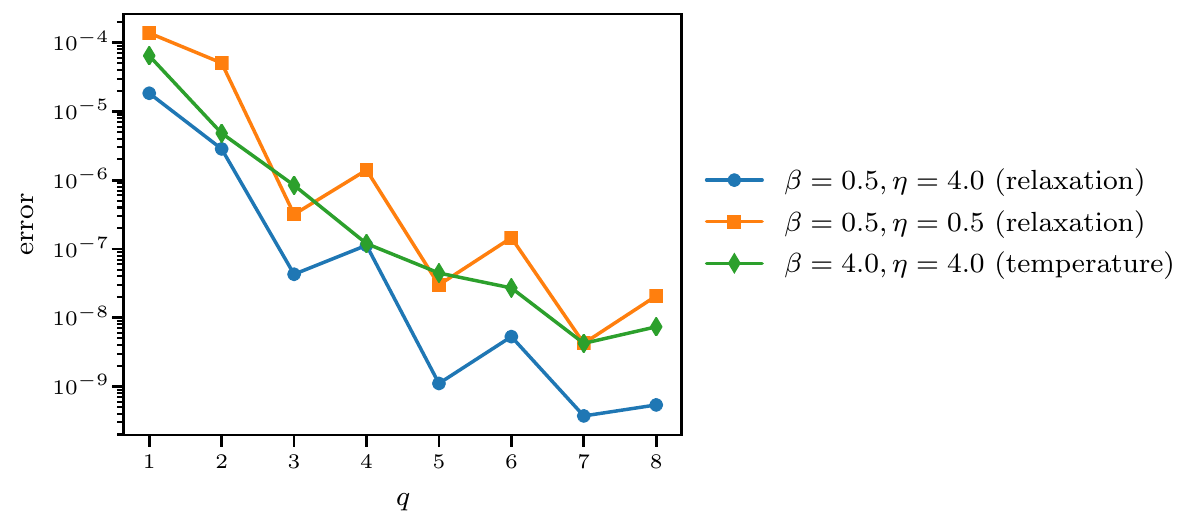}
    \caption{
        Convergence of the $q^2$-point bivariate trapezoidal rule applied to the integral over configurations in \eqref{e:conductivity_integral} for $E_F = \omega = 0$ and $\beta$, $\eta$ as indicated.
        Errors were computed relative to the result for $q = 10$.
    }
    \label{fig:quad_convergence}
\end{figure}

Finally, we demonstrate in Figure~\ref{fig:quad_convergence} the convergence of the periodic bivariate trapezoidal rule applied to the integral over configurations in \eqref{e:conductivity_integral}. We observe the following.
\begin{itemize}
    \item The coupling function $h(r)$ introduced in \eqref{eqn:coupling_function} is $C^\infty$ but not analytic, which according to Lemma~\ref{thm:local} implies that also the local conductivity $\sigma_\ell^r[b]$ as a function of the configurations $b$ is $C^\infty$ but not analytic.
    We therefore expect that trapezoidal rule quadrature applied to this function converges superalgebraically but subexponentially, and this matches our numerical observations in Figure~\ref{fig:quad_convergence}.

    \item Conversely to Figure~\ref{fig:r_convergence}, the convergence with respect to $q$ is fairly monotonous for relaxation-constrained parameters but oscillates for temperature-constrained parameters.
\end{itemize}

\section{Conclusion} \label{sec:conclusion}
We have demonstrated in this paper how to construct numerical algorithms for conductivity in incommensurate heterostructures where classical Bloch theory is unavailable.
Our construction is based on the observation that the ergodicity property of incommensurate bilayers allows us to replace conductivity calculations on the infinite system with an integral over the two unit cells.
The resulting formula presented in Section~\ref{sec:main} is similar to Bloch's theorem and extends an analogous construction for the density of states in \cite{massatt2017}.
Unlike in Bloch's theorem, however, the two unit cells require padding with a buffer region which may involve tens of thousands of atoms. This is far beyond the reach of the diagonalization algorithm; hence we propose in Section~\ref{sec:numerics} an alternative, linearly scaling algorithm in the spirit of the Kernel Polynomial Method and Fermi Operator Expansion. We show that for relaxation-constrained parameters $\beta \lesssim \eta^{-1/2}$, our algorithm requires only $\mathcal{O}\bigl(\eta^{-3/2}\bigr)$ inner products, and we present a rational approximation scheme which effectively allows us to reduce arbitrary parameter regimes to the relaxation-constrained case.

\section*{Acknowledgement}
The authors would like to thank Stephen Carr and Paul Cazeaux for helpful comments on
the theme of this paper.

\appendix

% !TEX root = Conductivity_Bilayer.tex

\section{Proofs: Conductivity}

\subsection{Notation}
Throughout several of the following proofs it will become necessary to compare
resolvent matrices $(z - H^r)^{-1}$ and $(z - H^{r'})^{-1}$ of different size $r, r'$. To
that end, it is convenient to implicitly extend all matrices to be defined over
$\Omega$. Specifically: if $A$ is usually defined over $\Omega_r$, then we use
the implicit extension to $\Omega$ given by
\[ [ A]_{R\alpha,R'\alpha'} =
  \begin{cases}
    A_{R\alpha,R'\alpha'},       & \quad \text{if } R\alpha \in \Omega_{r}, R'\alpha' \in \Omega_{r},\\
   0,			& \quad \text{otherwise}.\\
  \end{cases}
\]

\subsection{Proof of Lemma \ref{thm:local}}
We let $\Lambda = [-1,1]$ and recall that this interval contains the
spectrum for all Hamiltonians $H^r$, $r > 0$.
Letting $r>0$ and $a > 0,$ then following the same argument as \cite[Lemma 4.2]{massatt2017} we have the existence of $\tilde \gamma > 0$ such that, for $z \in \mathbb{C}$ with $d(z, \Lambda) > a/2$,
and $\Omega' \subset \Omega$ such that $\Omega_r \subset \Omega'$,
\begin{equation}
\label{e:bound}
\begin{split}
\Big|\big[(z-H^r_\ell(b))^{-1}\big]_{R\alpha,R'\alpha'}& - \big[(z-H_\ell(b)|_{\Omega'})^{-1}\big]_{R\alpha,R'\alpha'}\Big| \\
&\lesssim a^{-6} \min\Big\{ e^{-a \tilde \gamma |R-R'|}, e^{-a\tilde  \gamma (r - \max\{|R|,|R'|\})}\Big\}.
\end{split}
\end{equation}
We have the following Lemma:
\begin{lemma}
\label{lemma:thermo}
Using Assumption \ref{assump:decay}, we have
\begin{equation*}
{(z-H_\ell(b))^{-1} = \lim_{r\rightarrow\infty}(z-H_\ell^r(b))^{-1}}.
\end{equation*}
Further, $(z-H_\ell(b))^{-1}$ is periodic over $\Gamma_{\tau(\ell)}$.
\end{lemma}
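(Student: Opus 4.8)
The plan is to handle the two assertions of Lemma~\ref{lemma:thermo} separately: first the convergence of the finite-volume resolvents to the infinite-volume one, and then the $\R_{\tau(\ell)}$-periodicity of the limit in the configuration $b$.

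For the convergence, the starting point is that under Assumption~\ref{assump:decay} the operator $H_\ell(b) \in \mathcal{L}(\ell^2(\Omega))$ is bounded and Hermitian with spectrum in $\Lambda = [-1,1]$ (after the normalization $\|H\|_{\rm op} < 1$), so that $(z - H_\ell(b))^{-1}$ is a well-defined bounded operator whenever $d(z,\Lambda) > a/2$, and likewise each $(z - H^r_\ell(b))^{-1}$, understood per the conventions above as the inverse computed on $\Omega_r$ and then extended by zero. I would then simply invoke \eqref{e:bound} with $\Omega' = \Omega$ — legitimate since $\Omega_r \subseteq \Omega$ and $H_\ell(b)|_\Omega = H_\ell(b)$ — to obtain, for every fixed pair $R\alpha, R'\alpha'$ and all $r$ large enough that both indices lie in $\Omega_r$,
\[
\bigl|[(z - H^r_\ell(b))^{-1}]_{R\alpha,R'\alpha'} - [(z - H_\ell(b))^{-1}]_{R\alpha,R'\alpha'}\bigr|
\lesssim
a^{-6}\, e^{-a\tilde\gamma\,(r - \max\{|R|,|R'|\})},
\]
which tends to $0$ as $r\to\infty$. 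This already gives entrywise convergence, uniformly on bounded index sets and with an exponential rate; combined with the uniform bound $\|(z-H^r_\ell(b))^{-1}\|_{\rm op} \le 2/a$ and the decay of \eqref{e:bound} in $|R-R'|$, one can upgrade it to strong-operator convergence if desired, but the entrywise statement is what the downstream arguments use.

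For the periodicity, fix $v \in \R_{\tau(\ell)}$ and introduce the unitary $U_v$ on $\ell^2(\Omega)$ that translates the layer-$\tau(\ell)$ sites by $v$ and fixes the layer-$\ell$ sites, i.e.\ $(U_v\psi)(R,\alpha) = \psi(R - v, \alpha)$ for $\alpha \in \A_{\tau(\ell)}$ and $(U_v\psi)(R,\alpha) = \psi(R,\alpha)$ for $\alpha \in \A_\ell$; this is well-defined because $R \mapsto R - v$ is a bijection of $\R_{\tau(\ell)}$. Reading off the defining formula for $H_\ell(b)$ and using that $b$ enters only the interlayer blocks, and there only through $b(\delta_{\alpha\in\A_{\tau(\ell)}} - \delta_{\alpha'\in\A_{\tau(\ell)}})$, a short index computation gives $U_v\,H_\ell(b)\,U_v^* = H_\ell(b-v)$, equivalently $H_\ell(b+v) = U_v^*\,H_\ell(b)\,U_v$, hence $(z - H_\ell(b+v))^{-1} = U_v^*\,(z - H_\ell(b))^{-1}\,U_v$. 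Since $U_v$ restricts to the identity on $\mathrm{span}\{e_{R\alpha} : \alpha \in \A_\ell\}$, this shows that the layer-$\ell$ matrix elements $\langle e_{R\alpha}|(z-H_\ell(b))^{-1}|e_{R'\alpha'}\rangle$ with $\alpha,\alpha'\in\A_\ell$ are genuinely $\R_{\tau(\ell)}$-periodic in $b$; these are exactly the entries that enter the local current-current correlation measure \eqref{e:finite}, which is the sense in which $(z-H_\ell(b))^{-1}$ is ``periodic over $\Gamma_{\tau(\ell)}$''.

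I do not anticipate a real obstacle: the convergence is an immediate corollary of the decay estimate \eqref{e:bound}, and the periodicity is bookkeeping with a translation unitary. The one point demanding care is the reading of ``periodicity'': the full operator $(z-H_\ell(b))^{-1}$ is periodic only up to conjugation by the relabeling unitary $U_v$, so one must observe that the matrix elements actually used in \eqref{e:finite} and in Lemma~\ref{thm:local} — those indexed by layer-$\ell$ orbitals — are invariant under $U_v$ and hence honestly $\R_{\tau(\ell)}$-periodic, which is what underlies the $C_\per^n(\Gamma_{\tau(\ell)})$ regularity asserted there.
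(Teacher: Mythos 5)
Your proof is correct, and it differs from the paper's own (very terse) argument in both halves, in each case in the direction of more care. For the convergence, the paper also works from \eqref{e:bound}, but it compares finite volumes, asserts that $(z-H^r_\ell(b))^{-1}$ is Cauchy in $\mathcal{L}(\ell^2(\Omega))$, and then identifies the limit with $(z-H_\ell(b))^{-1}$ on the dense set of finitely supported vectors; your version --- entrywise convergence with an explicit rate from \eqref{e:bound} with $\Omega'=\Omega$, upgraded via the uniform bound $\|(z-H^r_\ell(b))^{-1}\|_{\rm op}\lesssim a^{-1}$ to strong convergence --- is the defensible reading, since sites at $O(1)$ distance from $\partial B_r$ prevent genuine operator-norm Cauchy-ness, so the paper's statement should anyway be understood in the strong (or entrywise) sense that you prove; the only caveat in your route is that applying \eqref{e:bound} with the infinite set $\Omega'=\Omega$ presumes the Combes--Thomas-type estimate of \cite{massatt2017} in that generality, which holds, or one can instead compare $\Omega_r\subset\Omega_{r'}$ and pass to the limit as the paper does. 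For the periodicity, the routes genuinely diverge: the paper asserts that $\|H^r_\ell(b+A_{\tau(\ell)}n)-H^r_\ell(b)\|_{\rm op}\to 0$ as $r\to\infty$ and concludes periodicity of the limiting resolvent, an argument that as literally written is not convincing (shifting $b$ by a lattice vector of $\R_{\tau(\ell)}$ changes the interlayer coupling of each fixed pair of sites by an $O(1)$ amount, so the difference does not vanish in norm), whereas your conjugation identity $H_\ell(b+v)=U_v^{*}H_\ell(b)U_v$ with the layer-$\tau(\ell)$ translation unitary is the correct mechanism, and your accompanying observation --- that literal periodicity in $b$ holds for matrix elements between vectors supported on layer $\ell$, which are fixed by $U_v$ and are exactly the entries entering \eqref{e:finite} and the $C^n_{\per}(\Gamma_{\tau(\ell)})$ statement of Lemma~\ref{thm:local} --- is the right interpretation of ``periodic over $\Gamma_{\tau(\ell)}$'' and is what the downstream arguments actually use. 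In short, your proof buys a sharper and more robust justification at the cost of introducing the relabeling unitary, while the paper's buys brevity at the cost of two steps that only hold after the kind of reinterpretation you supply.
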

\begin{proof}
{From (\ref{e:bound}), we have that $(z-H_\ell^r(b))^{-1}$ is Cauchy over $\mathcal{L}(\ell^2(\Omega))$, and hence has a well defined limit. This limit must be $(z-H_\ell(b))^{-1}$ as it is clearly true on the dense subset of vectors with a finite number of entries. } $ \|H_\ell^r(b + 2\pi A_{\tau(\ell)} n) - H_\ell^r(b)\|_{\rm op} \rightarrow 0$ for $n \in \mathbb{Z}^2$ as $r \rightarrow \infty$, and hence $(z- H_\ell(b))^{-1}$ is periodic over $\Gamma_{\tau(\ell)}.$
\end{proof}
Let $P_s : \ell^2(\Omega) \rightarrow \ell^2(\Omega)$ be the projection defined by
\begin{equation*}
[P_s \psi]_{R\alpha} = \delta_{|R| < s}\psi_{R\alpha}.
\end{equation*}

We now introduce two lemmas we will use for the convergence estimates. The
matrix $A$ in Lemma \ref{lemma:inside} corresponds to resolvent differences as
in \eqref{e:bound}, while the second lemma will be applied to
resolvents and localized Hamiltonian operators.
\begin{lemma}
\label{lemma:inside}
For $A \in \mathcal{L}(\ell^2(\Omega))$ satisfying (for $r > 1$) %for $r' \geq r$,
\begin{equation*}
|A_{R\alpha,R'\alpha'}| \lesssim e^{- \tilde c\log(a)}\min\{e^{-a \gamma_c |R-R'| },e^{-a \gamma_c(r-\max\{|R|,|R'|\})}\},
\end{equation*}
 it holds that{
\begin{equation*}
\|P_{r/2}AP_{r/2}\|_{\rm op} \lesssim e^{-\gamma_ca r/2 -  c\log(a)+c\log(r)}.
\end{equation*}}
\end{lemma}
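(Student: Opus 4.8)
The plan is to bound the operator norm $\|P_{r/2} A P_{r/2}\|_{\rm op}$ by the Schur test, using the two competing exponential factors in the hypothesis to control the relevant row/column sums. First I would observe that $P_{r/2} A P_{r/2}$ has entries $A_{R\alpha,R'\alpha'}$ only when $|R|, |R'| < r/2$, and hence $r - \max\{|R|,|R'|\} > r/2$. Thus the second factor $e^{-a\gamma_c(r-\max\{|R|,|R'|\})}$ is uniformly bounded by $e^{-a\gamma_c r/2}$ on the support of $P_{r/2} A P_{r/2}$. Combining this with the factor $e^{-\tilde c \log a}$ already gives a uniform entrywise bound
\[
|[P_{r/2}AP_{r/2}]_{R\alpha,R'\alpha'}| \lesssim e^{-\gamma_c a r/2 - \tilde c \log a} \min\{e^{-a\gamma_c|R-R'|}, 1\}.
\]

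Next I would apply the Schur test: $\|B\|_{\rm op} \le \bigl(\max_{R\alpha} \sum_{R'\alpha'} |B_{R\alpha,R'\alpha'}|\bigr)^{1/2} \bigl(\max_{R'\alpha'} \sum_{R\alpha} |B_{R\alpha,R'\alpha'}|\bigr)^{1/2}$, which by the symmetry of the bound reduces to estimating a single row sum $\sum_{R'\alpha'} |[P_{r/2}AP_{r/2}]_{R\alpha,R'\alpha'}|$. Here I would split the sum over $R' \in \mathcal{R}_1 \cup \mathcal{R}_2$ (restricted to $|R'| < r/2$) using the factor $e^{-a\gamma_c|R-R'|}$: the number of lattice points at distance roughly $t$ from $R$ grows like $t$ (in two dimensions), so $\sum_{R'} e^{-a\gamma_c|R-R'|} \lesssim \int_0^\infty t\, e^{-a\gamma_c t}\, dt = \mathcal{O}\bigl((a\gamma_c)^{-2}\bigr)$; the orbital sum over $\alpha, \alpha'$ contributes only a bounded constant. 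The factor $(a\gamma_c)^{-2}$ contributes a term $+2\log a$ after taking logarithms; together with $e^{-\tilde c\log a}$ this is absorbed into $e^{-c\log a}$ for a suitable $c$ (one may need to distinguish the regimes $a \le 1$ and $a \ge 1$, but in either case a polynomial-in-$a$ prefactor is swallowed by a $c\log a$ term). The remaining factor $e^{-\gamma_c a r/2}$ gives the claimed exponent $-\gamma_c a r /2$. The extra $+c\log r$ in the statement is a slack term that simply accommodates the polynomial-in-$r$ overcounting: the row sum is actually over at most $\mathcal{O}(r^2)$ sites, so bounding $\sum_{R'} 1 \lesssim r^2$ in the (harmless) event that the geometric-series estimate is replaced by a crude count yields exactly the $+c\log r$ correction. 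Taking square roots in the Schur bound halves nothing essential since both factors obey the same estimate.

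The main obstacle — though it is minor — is bookkeeping the dependence on $a$ carefully: one must make sure the negative power of $a$ from $e^{-\tilde c\log a}$ together with the positive powers of $a$ generated by the lattice sum $\mathcal{O}((a\gamma_c)^{-2})$ combine into a single clean $-c\log a$ term that is valid uniformly for all $a > 0$ (or at least for the range of $a$ used in the application, $a > 1$, where things are simplest). One must also confirm that the lattice-sum estimate is uniform in $R$, which follows from translation-invariance of the lattices $\mathcal{R}_\ell$ and the fact that the bound $e^{-a\gamma_c|R-R'|}$ depends only on $R - R'$. Everything else is a routine two-dimensional geometric-series / Schur-test computation.
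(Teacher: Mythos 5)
Your Schur-test route is a genuinely different strategy from the paper (which simply bounds the operator norm by the Frobenius norm: each entry of $P_{r/2}AP_{r/2}$ is bounded uniformly by $e^{-\tilde c\log(a)}e^{-a\gamma_c r/2}$ via the second factor in the min, there are at most $|\Omega_{r/2}|^2 \lesssim r^4$ entries, and taking the square root gives the stated bound with $c = \max\{2\tilde c,4\}$). However, your first displayed inequality is not a consequence of the hypothesis. You claim
\[
\min\bigl\{e^{-a\gamma_c|R-R'|},\,e^{-a\gamma_c(r-\max\{|R|,|R'|\})}\bigr\}
\;\lesssim\;
e^{-a\gamma_c r/2}\,e^{-a\gamma_c|R-R'|}
\qquad\text{for } |R|,|R'|<r/2,
\]
i.e.\ you bound the minimum of two factors in $(0,1)$ by (essentially) their product; the product of two such numbers is \emph{smaller} than their minimum, so the inequality goes the wrong way. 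Concretely, take $R$ near the origin and $R'$ near the boundary of $B_{r/2}$, so $|R-R'|\approx r/2$ and $r-\max\{|R|,|R'|\}\approx r/2$: the left-hand side is $\approx e^{-a\gamma_c r/2}$ while your claimed bound is $\approx e^{-a\gamma_c r}$. Consequently the geometric-series estimate $\sum_{R'}e^{-a\gamma_c|R-R'|}\lesssim (a\gamma_c)^{-2}$, which was the whole point of the factorization (and would even have avoided the $\log r$ loss), is built on an invalid entrywise bound and cannot be used as stated; with a correct splitting such as $\min\{X,Y\}\le X^{\theta}Y^{1-\theta}$ you only reach an exponent $(1-\theta)\,a\gamma_c r/2 < a\gamma_c r/2$ for any fixed $\theta>0$.

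The lemma itself is still within reach of your framework, and in fact by the fallback you mention in passing: on the support of $P_{r/2}AP_{r/2}$ use only the uniform bound $\min\{\cdot,\cdot\}\le e^{-a\gamma_c(r-\max\{|R|,|R'|\})}\le e^{-a\gamma_c r/2}$, count at most $\mathcal{O}(r^2)$ sites per row and column, and apply the Schur test; this gives $\|P_{r/2}AP_{r/2}\|_{\rm op}\lesssim e^{-a\gamma_c r/2-\tilde c\log(a)+2\log(r)}$, which is the asserted estimate with $c=\max\{\tilde c,2\}$ and is essentially the paper's Frobenius-norm computation in disguise. So the fix is to drop the factorized entrywise bound and the $(a\gamma_c)^{-2}$ refinement, and promote the crude-count version from a parenthetical remark to the actual argument.
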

\begin{proof}
 We estimate
\begin{equation*}
\begin{split}
\|P_{r/2}AP_{r/2}\|_{\rm op}^2 &\leq  \|P_{r/2}AP_{r/2}\|_F^2\\
&\lesssim e^{-\gamma_c ar - 2\tilde c \log(a) } |\Omega_{r/2}|^2 \\
& \lesssim r^4 e^{-\gamma_c ar - 2\tilde c \log(a) } \\
&  \lesssim  e^{-\gamma_ca r - 2\tilde c  \log(a) + 4\log(r)}
\end{split}
\end{equation*}
for  $c = \max\{2\tilde c,4\} $, so we then have
\begin{equation*}
\|P_{r/2}AP_{r/2}\|_{\rm op} \lesssim e^{-\gamma_ca r/2 -  c\log(a)+c\log(r)}.
\end{equation*}
\end{proof}

Recall $e_{0\alpha} \in \ell^2(\Omega)$ such that $[e_{0\alpha}]_{R\alpha'} =
\delta_{0R}\delta_{\alpha\alpha'}$.

\begin{lemma}
\label{lemma:product_and_localize}
If $A, A^{(1)}, A^{(2)} \in \mathcal{L}(\ell^2(\Omega))$ satisfies
\begin{equation*}
|A^{(j)}_{R\alpha,R'\alpha'}| \lesssim e^{-\gamma_c a |R-R'| - \tilde c \log(a)}
\end{equation*}
for some $\gamma_c > 0$, then there exist $\gamma_d, c > 0$ such that
\begin{align}
   \label{eq:localize}
   {\||(1-P_{r/2}) A |e_{0\alpha}\rangle\|_{\ell^2}}
   &\lesssim e^{- \gamma_d a r -  c \log(a)} \qquad \text{and} \\
   \label{eq:product}
   \big|[A^{(1)}A^{(2)}]_{R\alpha,R'\alpha'}\big| &\lesssim e^{-\gamma_d a |R-R'|- c \log(a)}.
\end{align}
\end{lemma}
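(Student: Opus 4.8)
Both estimates follow by direct summation over matrix entries, using the hypothesised off-diagonal decay (which I take to hold for $A$ as well as for $A^{(1)},A^{(2)}$) together with the elementary lattice-sum bound
\[
\sum_{R \in \R_\ell} e^{-\gamma a |R|} \;\lesssim\; a^{-2},
\]
which holds because the number of points of $\R_\ell$ in an annulus $\{k \le |R| < k+1\}$ grows linearly in $k$, so that $\sum_{R \in \R_\ell} e^{-\gamma a|R|} \lesssim \sum_{k \ge 0}(k+1)e^{-\gamma a k} = \mathcal{O}\bigl((\gamma a)^{-2}\bigr)$, uniformly with respect to an arbitrary shift of the centre. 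Summing over the finitely many orbitals $\alpha'' \in \A_1 \cup \A_2$ costs only a bounded multiplicative constant.

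For \eqref{eq:localize}, I would use that the $(R'\alpha')$-component of $A|e_{0\alpha}\rangle$ equals $A_{R'\alpha',0\alpha}$ and that $(1-P_{r/2})$ retains precisely the components with $|R'| \ge r/2$, to write
\[
\bigl\| (1-P_{r/2}) A |e_{0\alpha}\rangle \bigr\|_{\ell^2}^2
= \sum_{\substack{R'\alpha' \in \Omega \\ |R'| \ge r/2}} \bigl| A_{R'\alpha',0\alpha} \bigr|^2
\lesssim e^{-2\tilde c \log a} \sum_{\substack{R' \in \R_1\cup\R_2 \\ |R'| \ge r/2}} e^{-2\gamma_c a |R'|}.
\]
Splitting $e^{-2\gamma_c a|R'|} = e^{-\gamma_c a|R'|}\,e^{-\gamma_c a|R'|} \le e^{-\gamma_c a r/2}\,e^{-\gamma_c a|R'|}$ on the range $|R'| \ge r/2$ and applying the lattice-sum bound yields $\bigl\| (1-P_{r/2}) A |e_{0\alpha}\rangle \bigr\|_{\ell^2}^2 \lesssim e^{-\gamma_c a r/2}\,a^{-2}\,e^{-2\tilde c\log a}$; taking square roots gives \eqref{eq:localize} with $\gamma_d = \gamma_c/4$ and $c = \tilde c + 1$. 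For \eqref{eq:product}, I would expand the matrix product and use the triangle inequality in the form $|R-R''| + |R''-R'| \ge \tfrac12|R-R'| + \tfrac12|R-R''|$ (valid since the left side is at least each of $|R-R'|$ and $|R-R''|$):
\[
\bigl| [A^{(1)}A^{(2)}]_{R\alpha,R'\alpha'} \bigr|
\le \sum_{R''\alpha''} \bigl| A^{(1)}_{R\alpha,R''\alpha''}\bigr|\, \bigl| A^{(2)}_{R''\alpha'',R'\alpha'}\bigr|
\lesssim e^{-2\tilde c\log a}\, e^{-\frac{\gamma_c a}{2}|R-R'|} \sum_{R''\alpha''} e^{-\frac{\gamma_c a}{2}|R-R''|}.
\]
The last sum is $\lesssim a^{-2}$ uniformly in $R$ by the lattice-sum bound, which gives \eqref{eq:product} with $\gamma_d = \gamma_c/2$ and $c = 2\tilde c + 2$; taking $\gamma_d := \gamma_c/4$ and $c := 2\tilde c + 2$ then serves for both claims.

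The argument is routine, and I do not expect a genuine obstacle. The only point requiring care is bookkeeping: one must track the power of $a^{-1}$ produced by each lattice sum, since this is exactly what determines the constant $c$ in the $e^{-c\log a}$ prefactor, and one must keep enough decay in the internal summation variable $R''$ (respectively $R'$) while still extracting the $e^{-\gamma_d a|R-R'|}$ factor — handled by the $\tfrac12$–$\tfrac12$ split of the triangle inequality above. Since $\A_1,\A_2$ are finite the orbital sums are harmless, and the lemma is best viewed as a packaging of the off-diagonal-decay manipulations used repeatedly elsewhere in the appendix.
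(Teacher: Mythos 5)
Your argument is correct and is precisely the ``straightforward direct estimation of the individual vector or matrix entries'' that the paper's proof invokes without detail: entrywise bounds, a lattice-sum estimate, and an averaged triangle inequality to keep exponential decay in both the external variable $|R-R'|$ and the internal summation variable. The only implicit convention is that the lattice-sum bound $\sum_{R} e^{-\gamma a |R|} \lesssim a^{-2}$ (and hence your bookkeeping of the constant $c$) presumes $a \lesssim 1$, which is consistent with the regime in which the paper applies the lemma.
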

\begin{proof}
   The two estimates result follows from straightforward direct estimations of
   the individual vector or matrix entries of, respectively, $A e_{0\alpha}$ and
   $[A^{(1)}A^{(2)}]_{R\alpha,R'\alpha'}$.
\end{proof}

To proceed with the proof of Lemma \ref{thm:local}, we recognize that we can
rewrite the current-current correlation measure in terms of a contour integral.

\begin{lemma} \label{th:contour}
   Let $\phi$ be analytic on $S_{a}\times S_{a}$ and $\C_a \subset S_{a} -
   S_{a/2}$ a complex contour encircling the spectrum of
   $H^r_\ell(b)$, then
   \begin{equation}
   \label{e:resolvent_form}
   \begin{split}
   & \int_{\mathbb{R}^2} \phi(E_1,E_2) d\mu_\ell^r[b](E_1,E_2) \\
   &\qquad = -\frac{1}{4\pi^2} \oint_{z'\in\C_a}\oint_{z \in \C_a} \phi(z,z')
   \\  &\qquad\qquad\quad \sum_{\alpha \in \A_\ell}\langle e_{0\alpha}|(z-H^r_\ell(b))^{-1} \partial_p H^r_\ell(b) (z'-H^r_\ell(b))^{-1}\partial_{p'} H^r_\ell(b)|e_{0\alpha}\rangle dzdz'.
   \end{split}
   \end{equation}
\end{lemma}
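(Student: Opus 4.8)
The plan is to prove the identity by substituting the spectral resolution of the resolvents of $H^r_\ell(b)$ into the right-hand side of \eqref{e:resolvent_form} and evaluating the resulting double contour integral with the Cauchy integral formula, thereby recovering the defining expression \eqref{e:finite}.

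First I would use that $H^r_\ell(b)$ is Hermitian on the finite-dimensional space $\ell^2(\Omega_r)$, with eigenpairs $(\eps_i, v_i)$, to write
\[
(z - H^r_\ell(b))^{-1} = \sum_i \frac{|v_i\rangle \langle v_i|}{z - \eps_i}, \qquad z \notin \{\eps_i\}_i,
\]
and insert this expression (once in the variable $z$, once in $z'$) into the integrand of \eqref{e:resolvent_form}. Expanding the inner product gives, for each $\alpha \in \A_\ell$,
\[
\langle e_{0\alpha}|(z-H^r_\ell(b))^{-1} \partial_p H^r_\ell(b) (z'-H^r_\ell(b))^{-1}\partial_{p'} H^r_\ell(b)|e_{0\alpha}\rangle
=
\sum_{i,i'} \frac{\langle e_{0\alpha}|v_i\rangle \, \langle v_i|\partial_p H^r_\ell(b)|v_{i'}\rangle \, \langle v_{i'}|\partial_{p'} H^r_\ell(b)|e_{0\alpha}\rangle}{(z-\eps_i)(z'-\eps_{i'})}.
\]
Since $\Omega_r$ is finite, this sum is finite and the velocity operators $\partial_p H^r_\ell(b)$ are bounded, so the sum may be interchanged with the two contour integrals without further justification.

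Next I would evaluate the integrals term by term. Because $\C_a \subset S_a \setminus S_{a/2}$ encircles the spectrum, which lies in $[-1,1] \subset S_{a/2}$, and $\phi$ is analytic on $S_a \times S_a$ (which contains $\C_a$ together with the region it bounds), the Cauchy integral formula yields
\[
\frac{1}{2\pi\iota}\oint_{z \in \C_a} \frac{\phi(z, z')}{z - \eps_i}\, dz = \phi(\eps_i, z'),
\qquad
\frac{1}{2\pi\iota}\oint_{z' \in \C_a} \frac{\phi(\eps_i, z')}{z' - \eps_{i'}}\, dz' = \phi(\eps_i, \eps_{i'}),
\]
so that the double integral of $\phi(z,z')\,[(z-\eps_i)(z'-\eps_{i'})]^{-1}$ equals $(2\pi\iota)^2\,\phi(\eps_i,\eps_{i'}) = -4\pi^2\,\phi(\eps_i, \eps_{i'})$. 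The prefactor $-\tfrac{1}{4\pi^2}$ in \eqref{e:resolvent_form} therefore exactly cancels this factor, and summing over $\alpha \in \A_\ell$ reproduces the right-hand side of \eqref{e:finite}, which is by definition $\int \phi \, d\mu^r_\ell[b]$.

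The only delicate point, and the one I would be most careful about, is checking the hypotheses of the Cauchy integral formula: that $\C_a$ can be taken to be a simple closed, positively oriented contour whose enclosed region is contained in $S_a$ and contains every eigenvalue $\eps_i$ with winding number one, and that the joint analyticity of $\phi$ on $S_a \times S_a$ does indeed furnish analyticity in each variable separately on a neighborhood of that region. Both are guaranteed by choosing $\C_a$ to be, for instance, the boundary of a rectangle strictly between $S_{a/2}$ and $S_a$; with such a choice the remainder of the argument is a routine application of the residue calculus.
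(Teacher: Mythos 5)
Your proposal is correct and follows exactly the paper's own argument: insert the spectral decomposition of the resolvents into the right-hand side, exchange the finite sums with the contour integrals, and apply Cauchy's integral formula twice, with the resulting factor $(2\pi\iota)^2 = -4\pi^2$ cancelling the prefactor so that the definition \eqref{e:finite} of $\mu^r_\ell[b]$ is recovered. Your added care about the choice and orientation of $\C_a$ is sound but not a departure from the paper's (more terse) proof.
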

\begin{proof}
   Inserting the spectral decomposition of $H^r_\ell(b)$ into the right-hand
   side of \eqref{e:resolvent_form} and then applying Cauchy's integral formula
   twice yields the definition \eqref{e:finite} of the local current-current
   correlation measure $\mu_\ell^r[b]$.
\end{proof}

For the remainder of this proof, we denote $P = P_{r/2}$ for the sake of
brevity. Then,
\begin{equation*}
\begin{split}
\langle e_{0\alpha}|&(z-H^r_\ell(b))^{-1} \partial_p H^r_\ell(b) (z'-H^r_\ell(b))^{-1}\partial_{p'} H^r_\ell(b)|e_{0\alpha}\rangle \\
& = \sum_{U_i \in \{P,1-P\} }\langle e_{0\alpha}|(z-H^r_\ell(b))^{-1}U_1\partial_p H^r_\ell(b) U_2 (z'-H^r_\ell(b))^{-1}U_3\partial_{p'} H^r_\ell(b)|e_{0\alpha}\rangle \\
&= S_1^r + S_2^r,
\end{split}
\end{equation*}
where $S_1^r = S_1^r(z,z')$, $S_2^r = S_2^r(z,z')$ are given by
\begin{align*}
S_1^r &=\langle e_{0\alpha}|(z-H^r_\ell(b))^{-1}P \partial_p H^r_\ell(b) P(z'-H^r_\ell(b))^{-1}P\partial_{p'} H^r_\ell(b)|e_{0\alpha}\rangle  \qquad \text{and} \\
S_2^r &= \hspace{-6mm}
   \sum_{\substack{U_i \in \{P,1-P\} \\ (U_1,U_2,U_3) \neq (P,P,P)}}
      \hspace{-6mm}
   \langle e_{0\alpha}|(z-H^r_\ell(b))^{-1}U_1 \partial_p H^r_\ell(b) U_2 (z'-H^r_\ell(b))^{-1}U_3\partial_{p'} H^r_\ell(b)|e_{0\alpha}\rangle.
\end{align*}
Using Lemma \ref{lemma:thermo} and the resolvent formulation above, we can see that the {weak limit $\mu_\ell[b] := \lim_{r\rightarrow\infty}\mu_\ell^r[b]$  and the limit $S_j := \lim_{r\rightarrow\infty}S_j^r$ exist}. However, we wish to obtain an error estimate.
We can estimate
\begin{align}
   \notag
\biggl|\int_{\mathbb{R}^2}&F(E_1,E_2)d\mu_\ell^r[b](E_1,E_2)  - \int_{\mathbb{R}^2}F(E_1,E_2)d\mu_\ell[b](E_1,E_2)\biggr| \\
\notag
& \lesssim \oint_{z'\in\C_a}\oint_{z \in \C_a} |F(z,z')| \biggl|S_1^r+S_2^r - S_1-S_2\biggr|dzdz' \\
\notag
& \lesssim \sup_{z,z' \in \C_{a}}  |F(z,z')| \cdot \sup_{z,z' \in \C_a}  \big|S_1^r+S_2^r - S_1-S_2\big| \\
\label{eq:contour-to-DeltaS1r}
&\leq \sup_{z,z' \in \C_a}  |F(z,z')| \cdot \sup_{z,z' \in \C_a}  \big(|S_1^r-S_1| + |S_2^{r}|+|S_2|\big).
\end{align}
Applying Lemma \ref{lemma:product_and_localize}, we readily obtain
\begin{equation} \label{eq:estimate_S2r}
|S_2^r| \lesssim e^{-\gamma_a r a - c' \log(a)}
\end{equation}
for some constants $\gamma_a, c' >0$.

Next, we claim that there exist constants $\gamma_b, c''$ such that
\begin{equation} \label{eq:S1r-S1rprime}
   |S_1^r-S_1|
   \lesssim e^{-\gamma_b r a - c'' \log(a)+c''\log(r)}.
\end{equation}

\begin{proof}[Proof of \eqref{eq:S1r-S1rprime}]
   We define two sets of operators,
   \begin{align*}
   \Delta\mathcal{B}_{r} = \bigl\{\,&  P[(z-H^r_\ell(b))^{-1}-(z-H_\ell(b))^{-1}]P, \\
   &P[\partial_pH^r_\ell(b)-\partial_pH_\ell(b) ]P,\\
   & P[(z'-H^r_\ell(b))^{-1}-(z'-H_\ell(b))^{-1}]P,\\
   & P[\partial_p H^r_\ell(b)-\partial_{p'} H_\ell(b)]P\bigr\},
      \qquad \text{and} \\[2mm]
      \mathcal{B}_{r} = \bigl\{ \,&  P(z-H^r_\ell(b))^{-1}P, P\partial_pH^r_\ell(b)P, P(z'-H^r_\ell(b))^{-1}P, P\partial_{p'} H^r_\ell(b)P \\
      & P(z-H_\ell(b))^{-1}P, P\partial_pH_\ell(b)P, P (z'-H_\ell(b))^{-1}P, P\partial_{p'} H_\ell(b)P\bigr\}.
   \end{align*}
   Then, we can decompose
   \begin{equation} \label{eq:S1r-S1rprime-decompose}
      S_1^r-S_1 = \sum_j \langle e_{0\alpha}|A_1^{(j)} A_2^{(j)} A_3^{(j)} A_4^{(j)} |e_{0\alpha}\rangle,
   \end{equation}
   where each of the operators
   $A_i^{(j)} \in \mathcal{B}_{r} \cup \Delta \mathcal{B}_{r}$ and
   for every $j$ at least one $A_i^{(j)} \in \Delta\mathcal{B}_{r}$.

   Using Lemma \ref{lemma:inside}, it is straightforward to see that
   \begin{align*}
      \|A\|_{\rm op} &\lesssim \max\{a^{-1},1\} \qquad \text{for $A \in \mathcal{B}_{r}$ and} \\
      \|A\|_{\rm op} &\lesssim e^{-\gamma_b r a - c''\log(a)+c''\log(r)}
         \qquad \text{for $A \in \Delta\mathcal{B}_{r}$},
   \end{align*}
   which we apply to \eqref{eq:S1r-S1rprime-decompose} to complete the proof.
\end{proof}

Combining \eqref{eq:contour-to-DeltaS1r}, \eqref{eq:estimate_S2r} and \eqref{eq:S1r-S1rprime} we conclude that there exist $\gamma,c > 0$, such that
\begin{equation*}
\begin{split}
\biggl|\int_{\mathbb{R}^2}&F(E_1,E_2)d\mu_\ell^r[b](E_1,E_2)  - \int_{\mathbb{R}^2}F(E_1,E_2)d\mu_\ell^{r'}[b](E_1,E_2)\biggr| \\
& \leq \sup_{z,z' \in \C_a} |F(z,z')| e^{-\gamma r a - c \log(a)+c \log(r)}.
\end{split}
\end{equation*}
In particular, it follows that $\int_{\mathbb{R}^2}F(E_1,E_2)d\mu_\ell^r[b](E_1,E_2)$ has a limit, which we denote by
\begin{equation*}
   \int_{\mathbb{R}^2}F(E_1,E_2)d\mu_\ell[b](E_1,E_2) :=
   \lim_{r \to \infty} \int_{\mathbb{R}^2}F(E_1,E_2)d\mu_\ell^r[b](E_1,E_2).
\end{equation*}
As the limit of a bounded sequence of (matrix-valued) Radon measures, it is
clear that $\mu_\ell[b]$ is again a Radon measure.

Finally, we establish the regularity of $\mu_\ell^r[b]$ and $ \mu_\ell[b]$ as
functions of $b \in \Gamma_{\tau(\ell)}$, where we recall that $\tau$ is the
transposition operator, $\tau(1) = 2$ and $\tau(2) = 1$. The statement that
\begin{equation*}
   b \mapsto \int_{\mathbb{R}^2} F(E_1,E_2) d\mu_{\ell}^r[b](E_1,E_2)
   \in C^n(\Gamma_{\tau(\ell)})
\end{equation*}
follows immediately from the resolvent representation \eqref{e:resolvent_form}
and the fact that $(z - H_\ell^r(b))^{-1}$ is $n$ times differentiable with
respect to $b$ (All operators involved here are finite-dimensional).

Thus, it remains only to show the regularity
\begin{equation} \label{eq:regularity-mulb}
   b \mapsto \int_{\mathbb{R}^2} F(E_1,E_2) d\mu_{\ell}[b](E_1,E_2)
   \in C^n_{\rm per}(\Gamma_{\tau(\ell)}).
\end{equation}

To that end, we consider the operator
$H_\ell(b) \in \mathcal{L}(\ell^2(\Omega)).$ Using Lemma \ref{lemma:thermo}, we have
\begin{equation*}
\begin{split}
\int_{\mathbb{R}^2} &F(E_1,E_2) d\mu_\ell[b](E_1,E_2)
 = \frac{-1}{4\pi }\oint_{z'\in\C_a}\oint_{z \in \C_a} \\
&\qquad F(z,z')\langle e_{0\alpha}|(z-H_\ell(b))^{-1} \partial_p H_\ell(b) (z'-H_\ell(b))^{-1}\partial_{p'} H_\ell(b)|e_{0\alpha} \rangle dzdz'.
\end{split}
\end{equation*}
We notice that differentiation of the resolvent $(z - H_\ell^r(b))^{-1}$ leads to products of the resolvent $(z-H_\ell^r(b))^{-1}$ and matrices of the form $\partial_{b_1}^{m_1}\partial_{b_2}^{m_2} H_\ell^r(b)$, all of which are well defined in the thermodynamic limit and have periodic limits with respect to $\Gamma_{\tau(\ell)}$. For an example, consider the derivative
\begin{equation*}
\begin{split}
\partial_{b_1}(z-H_\ell^r(b))^{-1} &= (z-H_\ell^r(b))^{-1} \partial_{b_1} H_\ell^r(b) (z-H_\ell^r(b))^{-1} \\
&\rightarrow (z-H_\ell(b))^{-1} \partial_{b_1} H_\ell(b) (z-H_\ell(b))^{-1}.
\end{split}
\end{equation*}
Hence $(z - H_\ell(b))^{-1}$ is a differentiable operator when acting on an element of the domain, and  we trivially find $\int F\mu_\ell[b] \in C^n_\per(\Gamma_{\tau(\ell)})$.

\subsection{Proof of Theorem \ref{thm:main}}
We recall that the current-current correlation measure for the finite system was defined through
\begin{equation}
\int_{\mathbb{R}^2}F(E_1,E_2)d\ccm^r(E_1,E_2) =\sum_{ii'}F(\eps_i,\eps_{i'}) \frac{1}{|\Omega_r|}\Tr[|v_{i}\rangle \langle v_i |\partial_p H^r  |v_{i'}\rangle \langle v_{i'} |\partial_{p'}H^r|].
\end{equation}
We can decompose this into local current-current correlation measures of the finite system by defining $\mu_{R\alpha}^r$ via
\begin{equation*}
\int_{\mathbb{R}^2} F(E_1,E_2)d \mu_{R\alpha}^r = \sum_{ii'}F(\eps_i,\eps_{i'})[|v_{i}\rangle \langle v_i |\partial_p H^r  |v_{i'}\rangle \langle v_{i'} |\partial_{p'}H^r|]_{R\alpha,R\alpha}.
\end{equation*}
Hence,
\begin{equation*}
\int_{\mathbb{R}^2} F(E_1,E_2) d\ccm^r(E_1,E_2) = \frac{1}{|\Omega_r|} \sum_{R\alpha \in \Omega_r} \int_{\mathbb{R}^2} F(E_1,E_2) d\mu_{R\alpha}^r(E_1,E_2).
\end{equation*}
We will also reserve the notation for $\Omega' \subset \Omega$ finite
\begin{equation*}
\int_{\mathbb{R}^2} F(E_1,E_2')d \mu_{\ell}^{\Omega'}[b] = \sum_{ii'}F(\eps_i,\eps_{i'}) \frac{1}{|\Omega'|}[|v_{i}\rangle \langle v_i |\partial_p H_\ell(b)|_{\Omega'}  |v_{i'}\rangle \langle v_{i'} |\partial_{p'}H_\ell(b)|_{\Omega'}|]_{R\alpha,R\alpha}.
\end{equation*}
Here, $(\varepsilon_i,v_i)$ are the eigenpairs for $H_\ell(b)|_{\Omega'}$.
We pick $D > 0$, and then consider $\bsigma^r$, where we wish to consider the limit $r \rightarrow \infty$. We have
\begin{equation*}
\begin{split}
\bsigma^r &=  \frac{1}{|\Omega_r|} \int_{\mathbb{R}^2} F_{\parelem}(E_1,E_2)d\ccm^r(E_1,E_2)\\
& =\int_{\mathbb{R}^2} \frac{1}{|\Omega_r|} F_{\parelem}(E_1,E_2) \biggl(\sum_{R\alpha \in \Omega_{r-D}} d\mu_{R\alpha}^r(E_1,E_2) + \sum_{R\alpha \in\Omega_r \setminus\Omega_{r-D}} d\mu_{R\alpha}^r(E_1,E_2)\biggr). \\
\end{split}
\end{equation*}
We define the domain $\Omega_R^r$ for $R \in \R_\ell$ such that
\begin{equation*}
\Omega_R^r = \biggl((\R_\ell\cap B_r - R) \times \A_\ell\biggr) \cup \biggl( \R_{\tau(\ell)}\cap B_r - R + \mod_{\tau(\ell)}(R))\times \A_{\tau(\ell)}\biggr).
\end{equation*}
For $|R| < r - D$,
\begin{equation*}
\begin{split}
&\biggl|\int_{\mathbb{R}^2} F_{\parelem}(E_1,E_2)\sum_{\alpha \in \A_\ell}d\mu_{R\alpha}^r(E_1,E_2)- \int_{\mathbb{R}^2} F_{\parelem}(E_1,E_2)d\mu_\ell^D[R](E_1,E_2)\biggr|\\
&=\biggl|\int_{\mathbb{R}^2} F_{\parelem}(E_1,E_2)d\mu_\ell^{\Omega_R^r}[R](E_1,E_2)- \int_{\mathbb{R}^2} F_{\parelem}(E_1,E_2)d\mu_\ell^D[R](E_1,E_2)\biggr| \\
& \lesssim e^{-\gamma \lambda D - c\log(\lambda)}.
\end{split}
\end{equation*}
The last line follows from (\ref{e:bound}), the fact that $\Omega_D \subset \Omega_R^r$,
and $F_{\parelem}(E_1,E_2)$ is analytic on $S_\lambda \times S_\lambda$. Using Theorem \ref{thm:ergodic}, we have
\begin{equation*}
\begin{split}
\limsup_{r \rightarrow \infty} &\biggl|\int_{\mathbb{R}^2} F_{\parelem}(E_1,E_2)\frac{1}{|\Omega_r|}\sum_{R\alpha \in \Omega_{r-D}} d\mu_{R\alpha}^r(E_1,E_2) \\
&\qquad - \int_{\mathbb{R}^2} F_{\parelem}(E_1,E_2)\nu\sum_{\alpha \in \A_\ell}\int_{\Gamma_{p(\ell)}}d\mu_\ell^D[b](E_1,E_2) \biggr|  \lesssim e^{-\gamma \lambda D - c\log(\lambda)}.
\end{split}
\end{equation*}
Further,
\begin{equation*}
\frac{1}{|\Omega_r|} \int_{\mathbb{R}^2} F_{\parelem}(E_1,E_2)\sum_{R\alpha \in\Omega_r \setminus\Omega_{r-D}} d\mu_{R\alpha}^r(E_1,E_2) \rightarrow 0
\end{equation*}
as $r \rightarrow \infty$ since $\frac{|\Omega_r \setminus \Omega_{r-D}|}{|\Omega_r|} \rightarrow 0$. Hence we have, letting $D \rightarrow \infty$,
\begin{equation*}
 \frac{1}{|\Omega_r|} \int_{\mathbb{R}^2} F_{\parelem}(E_1,E_2)d\ccm^r(E_1,E_2) \rightarrow \int_{\mathbb{R}^2} F_{\parelem}(E_1,E_2)d\mu(E_1,E_2) = \sigma.
\end{equation*}
This is the desired global thermodynamic result.  Finally,
\begin{equation*}
|\sigma-\sigma^r| \lesssim e^{-\gamma \lambda r - c \log(\lambda)+c\log(r)}
\end{equation*}
is a trivial application of Lemma \ref{thm:local}.

% !TEX root = Conductivity_Bilayer.tex

\section{Proof of Theorem \ref{thm:conductivity_coeffs_bound}}\label{sec:proofs_numerics}

\subsection{Approximation theory background}\label{ssec:approximation_theory}

This subsection briefly recalls some concepts from approximation theory and introduces the notation used in the remainder of this section. A textbook introduction to the topics discussed here can be found e.g.,\ in \cite{Tre13}.

\emph{Joukowsky map $\phi(z)$.}
The three-term recurrence relation \eqref{eqn:chebrecursion} for the Chebyshev polynomials $T_k(x)$ is equivalent to
\begin{equation}\label{eqn:chebyshev_recursion}
T_k\bigl(\phi(z)\bigr)
:=
\frac{z^{k} + z^{-k}}{2},
\qquad
\text{where}
\qquad
\phi(z) :=
\frac{z + z^{-1}}{2}
\end{equation}
is known as the Joukowsky map.
Since $\phi(z) = \phi\bigl(z^{-1}\bigr)$, {the inverse Joukowsky map $\phi^{-1}(x)$} has two branches related by $\phi^{-1}_\pm(x) = \big(\phi^{-1}_\mp(x)\big)^{-1}$.
Given any curve $b \subset \mathbb{C}$ connecting the two branch points $x = \pm 1$, we define
\[
\phi^{-1}_b(x) := x + \sqrt[b]{x^2-1}
,
\]
where $\sqrt[b]{x^2-1}$ denotes the branch of $\sqrt{x^2-1}$ with branch cut along $b$ and sign such that $\phi^{-1}_b(\infty) = \infty$.

\emph{Bernstein ellipses $E(\alpha)$ and parameter function $\alpha_b(x)$.}
The definition of the Bernstein ellipses $E(\alpha)$ in \eqref{eqn:Bernstein_ellipses} is equivalent to
\[
E(\alpha)
=
\{x \in \mathbb{C} \mid \alpha_{[-1,1]}(x) < \alpha\}
,
\]
where the parameter function $\alpha_b(x)$ is given by
\[
\alpha_b(x) := \log |\phi^{-1}_b(x)|
.
\]
This function satisfies the following properties.

\begin{lemma}\label{lem:Joukowsky_properties}
    $ $
    \begin{itemize}
        \item $\alpha_b(x) = 0$ for all $x \in [-1,1]$ and all branch cuts $b$.
        \item $\alpha_{[-1,1]}(x) \geq 0$ for all $x \in \mathbb{C}$.
        \item $\alpha_b(x+0n) = -\alpha_b(x-0n)$ for all $x \in b$ and all branch cuts $b$, where the notation $x \pm 0n$ indicates that we evaluate $\alpha_b(x)$ on different sides of the branch cut.
    \end{itemize}
\end{lemma}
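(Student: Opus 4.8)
\emph{Plan.} All three statements will follow from the single algebraic identity that the two branches of $\phi^{-1}$ are reciprocal: with $\phi^{-1}_\pm(x) = x \pm \sqrt{x^2-1}$ one has $\phi^{-1}_+(x)\,\phi^{-1}_-(x) = x^2 - (x^2-1) = 1$, which is the relation $\phi^{-1}_\pm(x) = \bigl(\phi^{-1}_\mp(x)\bigr)^{-1}$ already recorded above. The rest is elementary modulus bookkeeping.

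For the first statement, let $x \in [-1,1]$, so that $x^2 - 1 \le 0$ and $\sqrt{x^2-1} = \pm\iota\sqrt{1-x^2}$. Then both candidate values $x \pm \iota\sqrt{1-x^2}$ of $\phi^{-1}_b(x)$ — in particular the two one-sided limits, should $x$ happen to lie on the curve $b$ — have modulus $\sqrt{x^2 + (1-x^2)} = 1$, so $\alpha_b(x) = \log 1 = 0$ for every choice of $b$. The third statement is the same computation on a general cut: for $x \in b$, let $s$ be the boundary value of $\sqrt[b]{x^2-1}$ taken from one side of $b$; since a square root changes sign across its branch cut, the boundary value from the other side is $-s$, so $\phi^{-1}_b(x \pm 0n) = x \pm s$. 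Since $(x+s)(x-s) = x^2 - s^2 = x^2-(x^2-1) = 1$, taking $\log|\cdot|$ of the product gives $\alpha_b(x+0n) + \alpha_b(x-0n) = 0$.

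For the second statement I would use the classical fact that the Joukowsky map $\phi$ restricts to a biholomorphism of $\{\,|w|>1\,\}$ onto $\mathbb{C}\setminus[-1,1]$ sending $\infty$ to $\infty$; since $\phi^{-1}_{[-1,1]}$ is exactly this inverse (normalised by $\phi^{-1}_{[-1,1]}(\infty)=\infty$), it maps $\mathbb{C}\setminus[-1,1]$ into $\{\,|w|>1\,\}$, whence $\alpha_{[-1,1]}(x) = \log|\phi^{-1}_{[-1,1]}(x)| > 0$ off $[-1,1]$, and together with the first statement on $[-1,1]$ this gives $\alpha_{[-1,1]} \ge 0$ everywhere. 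Should one wish to avoid quoting the conformal-mapping statement, one can argue directly: $g(x) := |\phi^{-1}_{[-1,1]}(x)|$ is continuous on the connected set $\mathbb{C}\setminus[-1,1]$, equals $1$ on $[-1,1]$ (by the first statement), and tends to $\infty$ at infinity; if $g(x_0) < 1$ for some $x_0$, connectedness forces $g(x_1) = 1$ at some $x_1 \notin [-1,1]$, so $\phi^{-1}_{[-1,1]}(x_1) = e^{\iota\theta}$ and hence $x_1 = \phi(e^{\iota\theta}) = \cos\theta \in [-1,1]$, a contradiction.

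The computations are all routine; the only step needing mild care is the second, where one must either invoke the standard conformal-mapping property of $\phi$ or run the short connectedness argument above to rule out $|\phi^{-1}_{[-1,1]}(x)| < 1$. I do not anticipate any real obstacle.
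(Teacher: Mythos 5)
Your proof is correct. Note that the paper itself states Lemma \ref{lem:Joukowsky_properties} without proof, treating these as standard facts about the Joukowsky map from approximation theory (cf.\ the reference to \cite{Tre13} at the start of that subsection), and your argument is precisely the standard justification: the identity $\phi^{-1}_+(x)\,\phi^{-1}_-(x)=1$ gives the first and third bullets by taking moduli (with the sign flip of $\sqrt[b]{x^2-1}$ across the cut handling the boundary values), and the fact that $\phi^{-1}_{[-1,1]}$ maps $\mathbb{C}\setminus[-1,1]$ into the exterior of the unit disk — which you also back up with a self-contained continuity/connectedness argument — gives the second. No gaps.
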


\emph{Zero-width contours.}
In an abuse of notation, we define $\partial \gamma$ for curves $\gamma \subset \mathbb{C}$ as the counterclockwise contour around a domain of infinitesimal width.
For example,
\[
\partial [-1,1]
=
\bigl([-1,1]+0\iota\bigl) \cup \bigl([-1,1]-0\iota\bigl)
,
\]
where the signed zero in the imaginary part indicates which branch to evaluate for a function with branch cut along $[-1,1]$.
\begin{example}
    We have
    \begin{align*}
        \int_{\partial[-1,1]} \phi_{[-1,1]}^{-1}(x) \, dx
        &=
        \int_{\partial[-1,1]} \Bigl(x + \sqrt[{[-1,1]}]{x^2 - 1}\Bigr) \, dx
        \\&=
        \int_{1+0\iota}^{-1+0\iota} \Bigl( x + \iota \, \sqrt{1-x^2} \Bigr) \, dx
        +
        \int_{-1-0\iota}^{1-0\iota} \Bigl( x -\iota \, \sqrt{1-x^2} \Bigr) \, dx
        \\&=
        -2 \iota \int_{-1}^1 \sqrt{1-x^2} \, dx
        =
        -\pi \, \iota
        ,
    \end{align*}
    where $\sqrt{y}$ with $y > 0$ denotes the positive square root and the sign of $\sqrt[{[-1,1]}]{x^2 - 1} = \pm \iota \sqrt{1-x^2}$ (i.e.\ the $\pm$ in $\pm \iota \sqrt{1-x^2}$) has been determined as follows.
    \begin{itemize}
        \item $\sqrt[{[-1,1]}]{x^2 - 1}$ has no branch cut along $\partial [-1,1]$, and $\sqrt[{[-1,1]}]{x^2-1} \neq 0$ for $x \neq \pm 1$;
        hence the only $x \in \partial[-1,1]$ where $\sqrt[{[-1,1]}]{x^2 - 1}$ is allowed to change sign is $x = \pm 1$.
        The sign of $\sqrt[{[-1,1]}]{x^2 - 1}$ on $[-1,1]+0\iota$ is therefore equal to the sign of $\sqrt[{[-1,1]}]{(0+0\iota)^2 - 1}$, and the sign of $\sqrt[{[-1,1]}]{x^2 - 1}$ on $[-1,1]-0\iota$ is equal to the sign of $\sqrt[{[-1,1]}]{(0-0\iota)^2 - 1}$.

        \item The sign of $\sqrt[{[-1,1]}]{(0+0\iota)^2 - 1}$ must be equal to the sign of $\sqrt[{[-1,1]}]{x^2 - 1}$ in the limit $x \to +\infty\iota$ since $\sqrt[{[-1,1]}]{x^2 - 1}$ is nonzero, purely imaginary and does not have a branch cut along the ray $(0,\infty) \, \iota$.

        \item We must have $\sqrt[{[-1,1]}]{x^2 - 1} = \iota \sqrt{1-x^2}$ in the limit $x \to +\infty\iota$ since for the opposite sign we would obtain $\lim_{x \to +\infty\iota} \phi_{[-1,1]}^{-1}(x) = x + \sqrt[{[-1,1]}]{x^2 - 1} \to 0$, which contradicts the definition of $\phi^{-1}_{[-1,1]}(x)$.

        \item The sign of $\sqrt[{[-1,1]}]{(0-0\iota)^2 - 1}$ can be determined analogously.
    \end{itemize}
\end{example}

\emph{Exponential decay with asymptotic rate $\alpha$.}
Following the $\mathcal{O}_\varepsilon$ notation of \cite{Tre17}, we introduce $a_k \leq_\varepsilon C(\alpha) \, \exp(-\alpha k)$ as a shorthand notation for exponential decay with asymptotic rate $\alpha$, i.e.,
\[
a_k \leq_\varepsilon C(\alpha) \, \exp\bigl(-\alpha k\bigr)
\quad
:\iff
\quad
\forall \tilde \alpha < \alpha :
a_k \leq C(\tilde \alpha) \, \exp\bigl(-\tilde \alpha k\bigr)
.
\]
We further write $a_k \lesssim_\varepsilon \exp(-\alpha k)$ if the prefactor $C(\alpha)$ is irrelevant.

If $\lim_{\tilde \alpha \to \alpha} C(\tilde \alpha)$ exists and is bounded, then $a_k \leq_\varepsilon C(\alpha) \, \exp\bigl(-\alpha k \bigr)$ is equivalent to $a_k \leq C(\alpha) \, \exp(-\alpha k )$.
A typical example of a sequence $a_k \leq_\varepsilon C(\alpha) \, \exp(-\alpha k )$ is $a_k := k \, \exp(-\alpha k )$, in which case $C(\tilde \alpha) = \max_k k \, \exp\bigl(-(\alpha-\tilde\alpha) \, k\bigr)$ and $\lim_{\tilde \alpha \to \alpha} C(\tilde \alpha) = \infty$.
For the purposes of this paper, the distinction between ``$a_k \leq_\varepsilon C(\alpha) \exp(-\alpha k)$'' and ``$a_k \leq C \, \exp(-\alpha k)$ for some unspecified $C > 0$'' is required for correctness, but it is of little practical relevance.

\emph{Analyticity in two dimensions.}
The notion of analyticity can be extended to two-dimensional functions $f(z_1,z_2)$ as follows.

\begin{definition}\label{def:analyticity_2d}
    A function $f : \Omega \to \mathbb{C}$ with $\Omega \subset \mathbb{C}^2$ is called \emph{analytic} if $f(z_1,z_2)$ is analytic in the one-dimensional sense in each variable $z_1, z_2$ separately for every $(z_1,z_2) \in \Omega$.
\end{definition}

This definition deserves several remarks.

\begin{itemize}
    \item By a well-known result due to Hartogs (see e.g.\ \cite[Theorem 1.2.5]{Kra92}), a function $f(z_1,z_2)$ analytic in the above one-dimensional sense is continuous and differentiable in the two-dimensional sense.
    \item It is known that if $f(z_1,z_2)$ is analytic on an arbitrary set $\Omega \subset \mathbb{C}^2$, then there exists an {open} set $\Omega' \supset \Omega$ such that $f(z_1,z_2)$ is analytic on $\Omega'$.
    \item It is known that if $f(z_1, z_2)$ is analytic on the biannulus $A(r_1) \times A(r_2)$ with $A(r) := \{z \mid r^{-1} < |z| < r\}$, it can be expanded into a Laurent series
    \[
    f(z_1, z_2)
    =
    \sum_{k_1, k_2 = -\infty}^\infty a_{k_1 k_2} \, z_1^{k_1} \, z_2^{k_2}
    \]
    with coefficients given by
    \[
    a_{k_1 k_2}
    =
    - \frac{1}{4\pi^2}
    \int_{\gamma_2} \int_{\gamma_1} f(z_1, z_2) \, \, z_1^{-k_1-1} \, z_2^{-k_2-1} \, dz_1 \, dz_2
    \]
    for any bicontour $\gamma_1\times\gamma_2$ where $\gamma_\ell \subset A(r_\ell)$ are two rectifiable closed contours winding once around the origin, see e.g.\ \cite[Theorem 1.5.26]{Sch05}.
\end{itemize}

\subsection{Auxiliary results}

We next establish a contour-integral formula for the Chebyshev coefficients of analytic functions in Theorem \ref{thm:formula} and demonstrate in Theorem \ref{thm:bound} how this formula translates into a bound on the Chebyshev coefficients. Both results are straightforward generalizations of the one-dimensional results (see e.g., \cite{Tre13}), except that we allow for a general branch cut in Theorem \ref{thm:bound}, which will be important in Subsection \ref{ssec:aniso_chebdecay}.

\begin{theorem}\label{thm:formula}
    A function $f(x_1,x_2)$ analytic on $[-1,1]^2$ can be expanded into a Chebyshev series
    \begin{equation}\label{eqn:Chebyshev_series}
        f(x_1,x_2) =
        \sum_{k_1,k_2 = 0}^\infty c_{k_1 k_2} \, T_{k_1}(x_1) \, T_{k_2}(x_2)
        \quad \text{ on $[-1,1]^2$}
    \end{equation}
    with coefficients $c_{k_1k_2}$ given by
    \[
    c_{k_1k_2}
    =
    -\tfrac{(2 - \delta_{k_1 0}) (2 - \delta_{k_2 0})}{4\pi^2}
    \int_{\partial [-1,1]} \int_{\partial [-1,1]} \,
    f(x_1,x_2) \,
    \frac{T_{k_1}(x_1)}{\sqrt[{[-1,1]}]{x_1^2-1}} \,
    \frac{T_{k_2}(x_2)}{\sqrt[{[-1,1]}]{x_2^2-1}} \,
    dx_1 \, dx_2
    .
    \]
\end{theorem}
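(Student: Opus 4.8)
\emph{Plan of proof.} The plan is to reduce the statement to the one-dimensional theory by lifting to the unit circle through the Joukowsky map, and to read off both the series representation and the coefficient formula from the two-dimensional Laurent expansion recalled in Subsection~\ref{ssec:approximation_theory}. By the first two remarks following Definition~\ref{def:analyticity_2d}, analyticity of $f$ on $[-1,1]^2$ means that $f$ is analytic on some open set $\Omega' \supset [-1,1]^2$. Since $\phi$ maps the unit circle onto $[-1,1]$, it maps an open neighbourhood of the bicircle $\{|z_1| = |z_2| = 1\} \subset \mathbb{C}^2$ into $\Omega'$, so $g(z_1,z_2) := f(\phi(z_1),\phi(z_2))$ is analytic on such a neighbourhood, and by compactness of the bicircle it is analytic on a biannulus $A(r_1)\times A(r_2)$ with $r_1,r_2 > 1$, where $A(r) := \{r^{-1}<|z|<r\}$. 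Because $\phi(z) = \phi(z^{-1})$, the function $g$ enjoys the reflection symmetries $g(z_1,z_2) = g(z_1^{-1},z_2) = g(z_1,z_2^{-1})$.

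Next I would invoke the bivariate Laurent expansion from the third remark in Subsection~\ref{ssec:approximation_theory}, $g(z_1,z_2) = \sum_{k_1,k_2 \in \mathbb{Z}} a_{k_1k_2}\, z_1^{k_1} z_2^{k_2}$, which converges absolutely and uniformly on the bicircle, a compact subset of the open biannulus. The reflection symmetries give $a_{k_1k_2} = a_{-k_1,k_2} = a_{k_1,-k_2}$. Collecting, for each $(|k_1|,|k_2|)$, the at most four Laurent monomials with that pair of exponent moduli and using $z^{k}+z^{-k} = 2\,T_k(\phi(z))$, the Laurent series regroups — the rearrangement being legitimate by absolute convergence — into $\sum_{k_1,k_2\geq 0} c_{k_1k_2}\, T_{k_1}(\phi(z_1))\, T_{k_2}(\phi(z_2))$ with
\[
c_{k_1k_2} = (2-\delta_{k_1 0})(2-\delta_{k_2 0})\, a_{k_1k_2}.
\]
Restricting to $z_\ell = e^{\iota\theta_\ell}$, i.e.\ $x_\ell = \phi(e^{\iota\theta_\ell}) = \cos\theta_\ell$, and noting that every point of $[-1,1]^2$ arises this way, establishes the Chebyshev series \eqref{eqn:Chebyshev_series} on $[-1,1]^2$.

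It remains to rewrite the Cauchy formula for $a_{k_1k_2}$ from the same remark, with the bicircle as contour, as the stated integral over $\partial[-1,1]\times\partial[-1,1]$. For each variable I would symmetrise: by $g(z_1,z_2)=g(z_1^{-1},z_2)$ the substitution $z_\ell \mapsto z_\ell^{-1}$, which reverses orientation and introduces $dz_\ell \mapsto -z_\ell^{-2}dz_\ell$, shows $a_{k_1k_2}$ equals the same integral with $z_\ell^{-k_\ell-1}dz_\ell$ replaced by $z_\ell^{k_\ell-1}dz_\ell$; averaging the two expressions turns the kernel into $\tfrac{z_\ell^{-k_\ell-1}+z_\ell^{k_\ell-1}}{2} = z_\ell^{-1}T_{k_\ell}(\phi(z_\ell))$. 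Doing this in both variables gives
\[
a_{k_1k_2} = -\tfrac{1}{4\pi^2}\oint_{|z_1|=1}\oint_{|z_2|=1} g(z_1,z_2)\,\frac{T_{k_1}(\phi(z_1))}{z_1}\,\frac{T_{k_2}(\phi(z_2))}{z_2}\,dz_1\,dz_2.
\]
Finally I would change variables along the unit circle via $x_\ell = \phi(z_\ell)$, equivalently $z_\ell = \phi^{-1}_{[-1,1]}(x_\ell)$: differentiating gives $\tfrac{dz_\ell}{z_\ell} = \tfrac{dx_\ell}{\sqrt[{[-1,1]}]{x_\ell^2-1}}$, and, as worked out in the ``zero-width contours'' example of Subsection~\ref{ssec:approximation_theory}, the counterclockwise unit circle maps onto the contour $\partial[-1,1]$, consistently across both sheets of $\sqrt[{[-1,1]}]{x_\ell^2-1}$. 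Substituting $g(z_1,z_2) = f(x_1,x_2)$ and $T_{k_\ell}(\phi(z_\ell)) = T_{k_\ell}(x_\ell)$ yields the claimed formula for $a_{k_1k_2}$, and multiplication by $(2-\delta_{k_1 0})(2-\delta_{k_2 0})$ gives that for $c_{k_1k_2}$.

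The main obstacle is the branch-and-orientation bookkeeping in the last step: one must verify that the counterclockwise unit circle in $z_\ell$ corresponds to $\partial[-1,1]$ in $x_\ell$ with exactly the orientation fixed by the ``zero-width contours'' example (upper sheet from $1$ to $-1$, lower sheet from $-1$ to $1$), that the single branch $\sqrt[{[-1,1]}]{x_\ell^2-1} = \tfrac12(z_\ell-z_\ell^{-1})$ is the correct one on both sheets, and that the sign changes coming from $z_\ell\mapsto z_\ell^{-1}$, from $dz_\ell\mapsto -z_\ell^{-2}dz_\ell$, and from reversing contour orientation all cancel. Since this bookkeeping is precisely what the ``zero-width contours'' paragraph was designed to handle, the verification is routine once that machinery is invoked; the remaining ingredients — the open analytic extension, the existence of the biannulus, and the absolute convergence justifying the rearrangement — are standard, and the whole argument is the two-dimensional analogue of the familiar one-dimensional derivation in \cite{Tre13}.
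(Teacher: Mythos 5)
Your proposal is correct and follows essentially the same route as the paper's proof: lift to the bicircle via the Joukowsky map, expand in a bivariate Laurent series, use $\phi(z)=\phi(z^{-1})$ to obtain the symmetry $a_{k_1k_2}=a_{-k_1,k_2}=a_{k_1,-k_2}$ and regroup into a Chebyshev series, then transform the Cauchy coefficient integral to the zero-width contour $\partial[-1,1]$. The only cosmetic difference is that you symmetrize the integration kernel via the substitution $z_\ell \mapsto z_\ell^{-1}$, whereas the paper averages the four Laurent coefficients $a_{\pm k_1,\pm k_2}$ before substituting; these are equivalent, and your extra care in extracting a genuine biannulus of analyticity is a harmless (indeed welcome) filling-in of a detail the paper leaves implicit.
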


\begin{proof}
    $f(x_1,x_2)$ is analytic on $[-1,1]$ and $\phi(z)$ maps the unit circle $\{|z| = 1\}$ holomorphically onto $[-1,1]$, thus $f\bigl(\phi(z_1),\phi(z_2)\bigr)$ is analytic on $\{|z| = 1\}^2$ and can be expanded into a Laurent series
    \begin{equation}\label{Laurent series}
        f\bigl(\phi(z_1),\phi(z_2)\bigr)
        =
        \sum_{k_1,k_2 = -\infty}^\infty a_{k_1,k_2} \, z_1^{k_1} \, z_2^{k_2}
        \qquad
    \end{equation}
    with coefficients $a_{k_1k_2}$ given by
    \begin{equation}\label{eqn:Laurent coefficients}
        a_{k_1k_2}
        =
        -\frac{1}{4\pi^2} \int_{|z_2| = 1} \int_{|z_1| = 1} f\bigl(\phi(z_1),\phi(z_2)\bigr) \, z_1^{-k_1-1} \, z_2^{-k_2-1} \, dz_1 \, dz_2
        .
    \end{equation}
    Since $\phi(z) = \phi\bigl(z^{-1}\bigr)$, we conclude that $a_{k_1k_2}$ is symmetric about the origin in both $k_1$ and $k_2$, i.e.,\ $a_{k_1,k_2} = a_{-k_1,k_2}$ and $a_{k_1,k_2} = a_{k_1,-k_2}$. The terms in \eqref{Laurent series} can therefore be rearranged as a Chebyshev series in {$\phi(z_1)$, $\phi(z_2)$},
    \[
    \begin{aligned}
        f\big(\phi(z_1),\phi(z_2)\big)
        &=
        \sum_{k_1,k_2 = 0}^\infty
        (2-\delta_{k_1 0}) (2 - \delta_{k_2 0}) \,
        a_{k_1k_2} \,
        \frac{z_1^{k_1} + z_1^{-k_1}}{2} \,
        \frac{z_2^{k_2} + z_2^{-k_2}}{2}
        \\&=
        \sum_{k = 0}^\infty c_{k_1k_2} \, T_{k_1}\big(\phi({z_1})\big) \, T_{k_2}\big(\phi(z_2)\big)
        ,
    \end{aligned}
    \]
    which is \eqref{eqn:Chebyshev_series} with $c_{k_1k_2} := (2-\delta_{k_1 0}) (2 - \delta_{k_2 0}) \, a_{k_1k_2}$.
    The formula for the coefficients follows by substituting
    \[
    z_{\ell} \to \phi^{-1}_{[-1,1]}(x_{\ell})
    ,\qquad
    dz_{\ell} \to \frac{\phi_{[-1,1]}^{-1}(x_{\ell})}{\sqrt[{[-1,1]}]{x^2 - 1}} \, dx_{\ell}
    \quad\text{and}\quad
    \{|z_\ell| = 1\} \to \partial [-1,1]
    \]
    for $\ell = 1$ and $\ell = 2$ in the integrals in \eqref{eqn:Laurent coefficients} {and setting
    \[
    c_{k_1k_2}
    =
    (2 - \delta_{k_1 0}) \, (2 - \delta_{k_2 0}) \, \tfrac{1}{4} \, \bigl(a_{k_1,k_2} + a_{k_1,-k_2} + a_{-k_1,k_2} + a_{-k_1,-k_2}\bigr)
    .
    \]
    }
\end{proof}

\begin{theorem}\label{thm:bound}
    Let $\Omega_1, \Omega_2 \subseteq \mathbb{C}$ be two simply connected sets {with rectifiable boundaries $\partial \Omega_\ell$} such that both sets contain $-1$ and $1$.
    It then holds that
    \begin{align*}
        \hspace{6em}&\hspace{-6em}
        \left|
        \frac{(2 - \delta_{k_1 0}) (2 - \delta_{k_2 0})}{4\pi^2}
        \int_{\partial \Omega_2}
        \int_{\partial \Omega_1}
        f(x_1,x_2) \,
        \frac{T_{k_1}(x_1)}{\sqrt[b_1]{x_1^2-1}} \,
        \frac{T_{k_2}(x_2)}{\sqrt[b_2]{x_2^2-1}} \,
        dx_1 \, dx_2
        \right|
        \leq
        \ldots \\ &
        \leq
        C(\partial \Omega_1) \, C(\partial \Omega_2) \,
        \|f\|_{\partial \Omega_1 \times \partial \Omega_2} \,
        \exp\bigl(-\alpha_1 k_1 - \alpha_2 k_2\bigr)
    \end{align*}
    for all $k_1, k_2 \in \mathbb{N}$ and all branch cuts $\bigl(b_\ell \subset \Omega_{\ell}\bigr)_{\ell \in \{1, 2\}}$ connecting $-1,1$, where
    \[
    \Bigl(\alpha_{\ell} := \min \alpha_{b_{\ell}}(\partial \Omega_{\ell})\Bigr)_{\ell \in \{1, 2\}}
    \qquad
    \text{and}
    \qquad
    C(\partial\Omega)
    :=
    \frac{1}{\pi} \, \int_{\phi^{-1}_b(\partial\Omega)} \frac{|dz|}{|z|}
    .
    \]
\end{theorem}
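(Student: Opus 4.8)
The plan is to reduce the two–dimensional estimate to a product of one–dimensional estimates by passing to the $z$–plane through the Joukowsky map, following the classical one–dimensional argument recalled in \cite{Tre13}; the only genuinely new work is the bookkeeping forced by a general branch cut $b_\ell$.

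First I would substitute $z_\ell = \phi^{-1}_{b_\ell}(x_\ell)$ in each variable. Using $\phi(z)^2-1 = \tfrac14(z-z^{-1})^2$ and $\phi'(z)=\tfrac{z^2-1}{2z^2}$ one gets $\dfrac{dx_\ell}{\sqrt[b_\ell]{x_\ell^2-1}} = \dfrac{dz_\ell}{z_\ell}$, while \eqref{eqn:chebyshev_recursion} gives $T_{k_\ell}(x_\ell) = \tfrac12\bigl(z_\ell^{k_\ell}+z_\ell^{-k_\ell}\bigr)$. Under this change of variables $\partial\Omega_\ell$ is carried to the rectifiable closed curve $\Gamma_\ell := \phi^{-1}_{b_\ell}(\partial\Omega_\ell)$, which winds once around the unit disk and along which $|z_\ell| \ge e^{\alpha_\ell}$ by the very definition of $\alpha_{b_\ell}$ and of $\alpha_\ell = \min\alpha_{b_\ell}(\partial\Omega_\ell)$. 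The integral in question becomes
\[ \frac{(2-\delta_{k_10})(2-\delta_{k_20})}{4\pi^2}\int_{\Gamma_1}\int_{\Gamma_2} g(z_1,z_2)\,\frac{z_1^{k_1}+z_1^{-k_1}}{2z_1}\,\frac{z_2^{k_2}+z_2^{-k_2}}{2z_2}\,dz_1\,dz_2, \]
where $g := f\circ(\phi\times\phi)$ satisfies $g(z_1^{\pm1},z_2^{\pm1}) = g(z_1,z_2)$ since $\phi(z)=\phi(z^{-1})$; in particular $\|g\|_{\Gamma_1\times\Gamma_2}=\|f\|_{\partial\Omega_1\times\partial\Omega_2}$, the same holds when a $\Gamma_\ell$ is replaced by its reflection $\widetilde\Gamma_\ell := \{z^{-1}:z\in\Gamma_\ell\}\subset\{|z|\le e^{-\alpha_\ell}\}$, and $\int_{\widetilde\Gamma_\ell}\tfrac{|dz|}{|z|}=\int_{\Gamma_\ell}\tfrac{|dz|}{|z|}=\pi\,C(\partial\Omega_\ell)$.

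Next I would write $\tfrac{z^k+z^{-k}}{2z} = \tfrac12 z^{k-1}+\tfrac12 z^{-k-1}$ in each variable (the factor is simply $z^{-1}$ when $k=0$) and expand the product into four terms. In a term carrying the monomial $z_\ell^{k_\ell-1}$ this factor is small on $\widetilde\Gamma_\ell$, whereas in a term carrying $z_\ell^{-k_\ell-1}$ it is small on $\Gamma_\ell$; so for each of the four terms I would move, one variable at a time, the contour to whichever of $\Gamma_\ell,\widetilde\Gamma_\ell$ makes the monomial small (leaving $\Gamma_\ell$ in place whenever $k_\ell=0$). On the chosen contour the monomial obeys the uniform bound $|z_\ell^{\pm k_\ell-1}|\le e^{-\alpha_\ell k_\ell}|z_\ell|^{-1}$, so the $\ell$-th factor $\tfrac12 z_\ell^{\pm k_\ell-1}$ contributes, after integration, at most $\tfrac{\pi}{2}\,e^{-\alpha_\ell k_\ell}\,C(\partial\Omega_\ell)$; pulling $\|g\|_{\Gamma_1\times\Gamma_2}=\|f\|_{\partial\Omega_1\times\partial\Omega_2}$ out of each term, summing the four terms, and multiplying by $\tfrac{(2-\delta_{k_10})(2-\delta_{k_20})}{4\pi^2}\le\tfrac{1}{\pi^2}$ produces exactly $C(\partial\Omega_1)\,C(\partial\Omega_2)\,\|f\|_{\partial\Omega_1\times\partial\Omega_2}\,\exp(-\alpha_1k_1-\alpha_2k_2)$ (the cases in which some $k_\ell=0$ are only easier, the smaller prefactor compensating the fewer terms).

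The contour deformation is the only step that is not bookkeeping, and it is where I expect the real care to be needed. It requires that, for each fixed value of the other variable, $z_\ell^m g$ be analytic in $z_\ell$ on the annular region between $\Gamma_\ell$ and $\widetilde\Gamma_\ell$ — a region that contains the unit circle but excludes the origin, so that even the $z_\ell^{-k_\ell-1}$ monomial introduces no pole. This holds because $\phi$ maps both $\{|z|<1\}$ and $\{|z|>1\}$ onto $\mathbb{C}\setminus[-1,1]$, so $g$ inherits the analyticity of $f$ up to and across the cut $b_\ell$ (the implicit standing hypothesis on the $\Omega_\ell$ and $b_\ell$), and the deformation may then be carried out in each variable separately by the one–dimensional Cauchy theorem together with Fubini and Definition~\ref{def:analyticity_2d}; the degenerate case $\Gamma_\ell=\widetilde\Gamma_\ell=\{|z_\ell|=1\}$, i.e.\ $\Omega_\ell$ collapsed onto $b_\ell$ as in Theorem~\ref{thm:formula}, needs no deformation at all. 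Everything else is the elementary one–dimensional estimate applied twice, with Lemma~\ref{lem:Joukowsky_properties} supplying the required properties of $\alpha_{b_\ell}$.
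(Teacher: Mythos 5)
Your proposal is correct and follows essentially the same route as the paper's proof: the Joukowsky substitution $z_\ell = \phi^{-1}_{b_\ell}(x_\ell)$ turning $\frac{T_{k_\ell}(x_\ell)}{\sqrt[b_\ell]{x_\ell^2-1}}\,dx_\ell$ into $\frac{z_\ell^{k_\ell}+z_\ell^{-k_\ell}}{2}\,\frac{dz_\ell}{z_\ell}$, followed by the H\"older-type bound $|z_\ell^{\mp k_\ell-1}|\le e^{-\alpha_\ell k_\ell}\,|z_\ell|^{-1}$ together with $\int_{\phi^{-1}_{b_\ell}(\partial\Omega_\ell)}\frac{|dz|}{|z|}=\pi\,C(\partial\Omega_\ell)$, which reproduces the stated constant exactly. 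The only difference is one of explicitness: where the paper compresses everything into ``reversing the substitutions of Theorem~\ref{thm:formula} and applying H\"older,'' you spell out the inversion symmetry $g(z)=g(z^{-1})$ and the contour deformation between $\Gamma_\ell$ and its reflection needed to control the $z_\ell^{k_\ell-1}$ monomials, correctly identifying that this (and hence an implicit analyticity hypothesis on $f$ over $\Omega_1\times\Omega_2$) is where the real content lies.
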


\begin{proof}
    Reversing the substitutions in the proof of Theorem~\ref{thm:formula} transforms the expression on the left-hand side to \eqref{eqn:Laurent coefficients} up to a factor $(2-\delta_{k_1 0}) (2 - \delta_{k_2 0})$ and the integrals running over $\phi^{-1}_b(\partial\Omega_{\ell})$ instead of $\{|z_{\ell}| = 1\}$ for $\ell \in \{1,2\}$.
    The claim follows by bounding these integrals using H{\"o}lder's inequality.
\end{proof}

We illustrate the application of Theorems \ref{thm:formula} and \ref{thm:bound} by proving the following corollary which can be found e.g.,\ in \cite[Theorem 11]{BM48}, \cite[Lemma 5.1]{Tre17} and \cite[Theorem 11]{Boy09}.

\begin{corollary}\label{cor:ellipse_bound}
    The Chebyshev coefficients of a function $f(x_1,x_2)$ analytic on $E(\alpha_1) \times E(\alpha_2)$ are bounded by
    \begin{equation}\label{eqn:ellipse_bound}
        |c_{k_1k_2}|
        \lesssim
        4 \, \|f\|_{\partial E(\alpha_1) \times \partial E(\alpha_2)} \, \exp\bigl(-\alpha_1 k_1 - \alpha_2 k_2\bigr)
        \quad \text{ for all } k_1,k_2 \in \mathbb{N}
        .
    \end{equation}
\end{corollary}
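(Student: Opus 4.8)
The plan is to deduce Corollary~\ref{cor:ellipse_bound} directly from Theorems~\ref{thm:formula} and \ref{thm:bound} by taking $\Omega_1 = E(\alpha_1)$, $\Omega_2 = E(\alpha_2)$, and the canonical branch cuts $b_1 = b_2 = [-1,1]$. First I would record that each Bernstein ellipse $E(\alpha_\ell)$ is a bounded, open, simply connected set with rectifiable (indeed real-analytic) boundary containing $\pm 1$, so that Theorem~\ref{thm:bound} applies to this data. The only caveat is that $f$ is assumed analytic merely on the \emph{open} biellipse, so I would first work with slightly shrunk ellipses $E(\tilde\alpha_\ell)$, $\tilde\alpha_\ell < \alpha_\ell$, on a neighbourhood of whose closure $f$ is analytic, and pass to the limit $\tilde\alpha_\ell \uparrow \alpha_\ell$ at the end.

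Next I would identify the integral on the left-hand side of Theorem~\ref{thm:bound}, evaluated with $\partial\Omega_\ell = \partial E(\tilde\alpha_\ell)$, with the Chebyshev coefficient $c_{k_1k_2}$. By Theorem~\ref{thm:formula}, $c_{k_1k_2}$ is given by that same integrand integrated over the zero-width contour $\partial[-1,1]\times\partial[-1,1]$. Since $f(x_1,x_2)$ is analytic on $E(\tilde\alpha_1)\times E(\tilde\alpha_2)$ and, for each fixed $k$, the weight $T_k(x)/\sqrt{x^2-1}$ (branch cut along $[-1,1]$) is analytic on $\mathbb{C}\setminus[-1,1]$, Cauchy's theorem applied in each variable separately lets me deform each contour $\partial[-1,1]$ outward to the ellipse $\partial E(\tilde\alpha_\ell)$ without changing the value of the double integral; the region swept, $E(\tilde\alpha_\ell)\setminus[-1,1]$, contains no singularities. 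Hence the left-hand side of Theorem~\ref{thm:bound} equals $|c_{k_1k_2}|$.

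It then remains to evaluate the two constants produced by Theorem~\ref{thm:bound}. Using $\alpha_{[-1,1]}(x) = \log|\phi^{-1}_{[-1,1]}(x)|$ together with $E(\alpha) = \{x : \alpha_{[-1,1]}(x) < \alpha\}$, the boundary $\partial E(\tilde\alpha_\ell)$ is exactly the level set $\{\alpha_{[-1,1]} = \tilde\alpha_\ell\}$, so $\min\alpha_{[-1,1]}(\partial E(\tilde\alpha_\ell)) = \tilde\alpha_\ell$, which produces the exponential rates $\tilde\alpha_\ell$ (hence $\alpha_\ell$ in the limit). Moreover $\phi^{-1}_{[-1,1]}$ maps $\partial E(\tilde\alpha_\ell)$ bijectively onto the circle $\{|z| = e^{\tilde\alpha_\ell}\}$, whence $C(\partial E(\tilde\alpha_\ell)) = \tfrac{1}{\pi}\int_{|z|=e^{\tilde\alpha_\ell}} |z|^{-1}\,|dz| = 2$, independently of $\tilde\alpha_\ell$. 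Plugging these into Theorem~\ref{thm:bound} gives $|c_{k_1k_2}| \leq 4\,\|f\|_{\partial E(\tilde\alpha_1)\times\partial E(\tilde\alpha_2)}\,\exp(-\tilde\alpha_1 k_1 - \tilde\alpha_2 k_2)$, and letting $\tilde\alpha_\ell\uparrow\alpha_\ell$ yields \eqref{eqn:ellipse_bound} in the asymptotic-rate sense (or literally if $f$ extends continuously to the closed biellipse); the factor $4$ is an over-estimate when $k_1=0$ or $k_2=0$, which is harmless.

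I do not expect a genuine obstacle, since this corollary is designed as an illustration of the machinery and every step is routine. The one point that requires a little care is the contour deformation around the branch cut: making precise that $\partial[-1,1]$ and $\partial E(\tilde\alpha_\ell)$ are homotopic within the common domain of analyticity of $f(\cdot,x_2)$, $f(x_1,\cdot)$ and the weight $T_k/\sqrt{x^2-1}$ (which excludes the segment $[-1,1]$ but nothing else), together with bookkeeping of orientations so that no spurious sign or factor of $2$ creeps in.
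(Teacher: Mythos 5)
Your proposal is correct and follows essentially the same route as the paper's own proof: express $c_{k_1k_2}$ via Theorem~\ref{thm:formula}, deform each contour $\partial[-1,1]$ to $\partial E(\tilde\alpha_\ell)$ with $\tilde\alpha_\ell<\alpha_\ell$ by Cauchy's theorem, apply Theorem~\ref{thm:bound} with $C\bigl(\partial E(\tilde\alpha_\ell)\bigr)=2$ and $\min\alpha_{[-1,1]}\bigl(\partial E(\tilde\alpha_\ell)\bigr)=\tilde\alpha_\ell$, and pass to the limit, which is exactly how the paper obtains the $\lesssim$ (asymptotic-rate) form of \eqref{eqn:ellipse_bound}. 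Your extra care about the homotopy of the contours within the domain of analyticity of the weight $T_k(x)/\sqrt{x^2-1}$ is a sound elaboration of the same step, not a different argument.
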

\begin{proof}
    $f(x_1,x_2)$ is analytic on $[-1,1]^2 \subset E(\alpha_1) \times E(\alpha_2)$, thus Theorem \ref{thm:formula} states that we can expand $f(x_1,x_2)$ into a Chebsyhev series with coefficients given by
    \[
    c_{k_1k_2}
    =
    -\frac{(2 - \delta_{k_1 0}) (2 - \delta_{k_2 0})}{4\pi^2}
    \int_{{\partial [-1,1]}} \int_{{\partial [-1,1]}} \,
    f(x_1,x_2) \,
    \frac{T_{k_1}(x_1)}{\sqrt[{[-1,1]}]{x_1^2-1}} \,
    \frac{T_{k_2}(x_2)}{\sqrt[{[-1,1]}]{x_2^2-1}} \,
    dx_1 \, dx_2
    .
    \]
    The integrand in this expression is analytic on $x_1 \in E(\alpha_1) \setminus [-1,1]$ for any fixed $x_2 \in \partial [-1,1]$;
    hence by the one-dimensional Cauchy integral theorem we can move the contour in $x_1$ from $\partial [-1,1]$ to $\partial E(\tilde\alpha_1)$ for any $\tilde \alpha_1 < \alpha_1$, i.e.\ we have
    \[
    c_{k_1k_2}
    =
    \ldots
    \int_{\partial [-1,1]} \int_{\partial E(\tilde \alpha_1)} \,
    \ldots
    \, dx_1 \, dx_2
    .
    \]
    Arguing similarly in the second variable, we obtain
    \[
    c_{k_1k_2}
    =
    \ldots
    \int_{\partial E(\tilde \alpha_2)} \int_{\partial E(\tilde \alpha_1)} \,
    \ldots
    \, dx_1 \, dx_2
    \]
    for any pair $(\tilde \alpha_\ell < \alpha_\ell)_{\ell \in \{1,2\}}$,
    which by Theorem \ref{thm:bound} implies
    \[
    |c_{k_1,k_2}|
    \leq
    4 \,
    \|f\|_{\partial E(\tilde \alpha_1) \times \partial E(\tilde \alpha_2)} \,
    \exp\bigl(-\tilde \alpha_1 k_1 - \tilde \alpha_2 k_2\bigr)
    \]
    where we used
    $C\bigl(\partial E(\alpha)\bigr) = \frac{1}{\pi} \, \int_{|z| = \exp(\alpha)} \frac{|dz|}{|z|} = 2$
    and
    $\alpha_{[-1,1]}\bigl(\partial E(\alpha)\bigr) = \alpha$.
    This is precisely the bound \eqref{eqn:ellipse_bound}.
\end{proof}

\subsection{Chebyshev coefficients of the conductivity function}\label{ssec:aniso_chebdecay}

This subsection establishes the bound \eqref{eqn:conductivity_coeffs_bound} with explicit formulae for $\alpha_\mathrm{diag}(\zeta)$ and $ \alpha_\mathrm{anti}(\zeta)$. This will be done in two steps.
First, we will prove Theorem \ref{thm:frac_bound} below which bounds the Chebyshev coefficients of the factor $f(x_1,x_2) = \frac{1}{x_1 - x_2 + s}$ from \eqref{eqn:relaxation_factor} where we set $s := \omega + \iota\eta$ for notational convenience.
The extension to the conductivity function $F_\zeta$ will then be provided in Theorem \ref{thm:conductivity_coeffs_bound_2}.

We note that $\frac{1}{x_1 - x_2 + s}$ is analytic at all $x_1 \in \mathbb{C}$ except $x_1 = x_2 - s$, and likewise $\frac{1}{x_1 - x_2 + s}$ is analytic at all $x_2 \in \mathbb{C}$ except $x_2 = x_1 + s$.
The condition that $\frac{1}{x_1 - x_2 + s}$ be analytic on a domain $\Omega_1 \times \Omega_2$ is thus equivalent to $\bigl(\Omega_1 + s\bigr) \cap \Omega_2 = \{\}$, which is clearly the case for $\Omega_1 = \Omega_2 = [-1,1]$ and $\imag(s) \neq 0$, see Figure \ref{fig:contours_initial}.
By Theorem \ref{thm:formula}, we can thus expand $\frac{1}{x_1 - x_2 + s}$ into a Chebyshev series with coefficients given by
\begin{equation}\label{eqn:conductivity_coeffs}
c_{k_1k_2}
=
-\frac{(2 - \delta_{k_1 0}) (2 - \delta_{k_2 0})}{4\pi^2}
\int_{\partial \Omega_2} \int_{\partial \Omega_1} \,
\frac{1}{x_1 - x_2 + s} \,
\frac{T_{k_1}(x_1)}{\sqrt[b_1]{x_1^2-1}} \,
\frac{T_{k_2}(x_2)}{\sqrt[b_2]{x_2^2-1}} \,
dx_1 \, dx_2
\end{equation}
where for now $\Omega_1 = \Omega_2 = b_1 = b_2 = [-1,1]$.

Like in the proof of Corollary \ref{cor:ellipse_bound}, we will next use Cauchy's integral theorem repeatedly to move the contour domains $\Omega_1,\Omega_2$ to appropriate shapes and then employ Theorem \ref{thm:bound} to bound the Chebyshev coefficients.
To this end, let us introduce
\[
\hat\alpha_\mathrm{max}(s)
:=
\min\{\alpha_{[-1,1]}(\pm 1 - s)\}
=
\alpha_{[-1,1]}\bigl(1 - |\real(s)| - \iota\imag(s)\bigr)
,
\]
which is the parameter of the ellipse $E\bigl(\hat\alpha_\mathrm{max}(s)\bigr)$ penetrating the line $[-1,1] - s$ up to the endpoints $\pm 1 + s$  (see Figure \ref{fig:contours_definitions}), and let us denote by
\[
\hat D(s)
:=
\Bigl(E\big(\hat\alpha_\mathrm{max}(s)\big) + s\Bigr)
\cap
\bigl\{x \in \mathbb{C} \mid \imag(x) \leq 0\bigr\}
\]
the portion of $E\big(\hat\alpha_\mathrm{max}(s)\big) + s$ penetrating $[-1,1]$.
Since $\bigl([-1,1] + s\bigr) \cap \overline{\hat D(s) } = \{\}$ (see Figure \ref{fig:contours_definitions}),
we conclude that $\frac{1}{x_1 - x_2 + s}$ is analytic on $[-1,1] \times \big([-1,1] \cup \overline{\hat D(s)}\bigr)$, thus we can replace $\Omega_2 = [-1,1]$ with $\Omega_2 = [-1,1] \cup \hat D(s)$ without changing the value of the integral.
Similarly, we can move the branch cut $b_2 = [-1,1]$ to the lower boundary of $\Omega_2$,
\[
b_2
=
\hat b^\star(s)
:=
\bigl([-1,1] \setminus \hat D(s) \bigr)
\cup
\{x \in \partial \hat D(s) \mid \imag(x) < 0\}
,
\]
which in turn allows us to replace $\Omega_2 = [-1,1] \cup \hat D(s)$ with $\Omega_2 = \hat b^\star(s)$ and finally replace $\Omega_1 = [-1,1]$ with $\Omega_1 = E(\tilde \alpha_1)$ for any $\tilde \alpha_1 < \hat\alpha_\mathrm{max}(s)$, see Figure \ref{fig:contours_final}.
By Theorem \ref{thm:bound}, these final contours imply the bound
\begin{equation}\label{eqn:plane_bound}
|c_{k_1k_2}|
\lesssim_{\varepsilon}
\exp\bigl(-\hat\alpha_\mathrm{max}(s) \, k_1 - \hat\alpha_\mathrm{min}(s) \, k_2\bigr)
\end{equation}
where
\begin{equation}\label{eqn:hat_alpha_min}
\hat\alpha_\mathrm{min}(s)
:=
\min \alpha_{\hat b^\star(s)}\bigl(\partial \hat b^\star(s)\bigr)
=
-\max \alpha_{[-1,1]}\bigl(\hat b^\star(s)\bigr)
,
\end{equation}
(the second equality follows from Lemma \ref{lem:Joukowsky_properties}).
We note that the last expression in \eqref{eqn:hat_alpha_min} may be interpreted as minus the parameter of the smallest ellipse containing $\hat D(s)$, see Figure \ref{fig:contours_definitions}.

\begin{figure}
    \centering
    \subfloat[Initial contours]{\label{fig:contours_initial}{\includegraphics{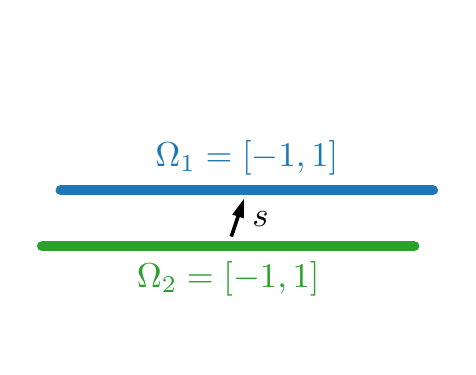}}}
    \hspace{0.2em}
    \subfloat[Final contours]{\label{fig:contours_final}{\includegraphics{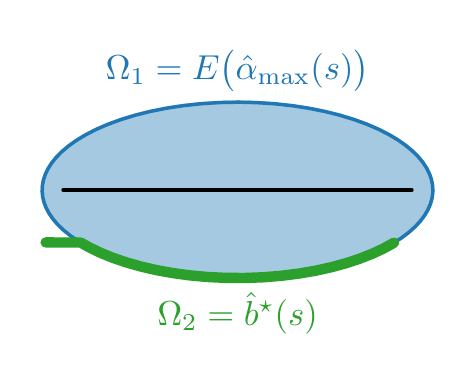}}}
    \hspace{0.2em}
    \subfloat[Definitions]{\label{fig:contours_definitions}{\includegraphics{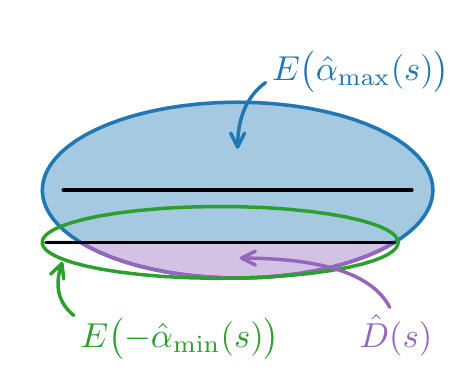}}}
    \caption{Illustration of the various definitions in Subsection \ref{ssec:aniso_chebdecay}.}
    \label{fig:ellipses}
\end{figure}

By the symmetry of $\frac{1}{x_1 - x_2 + s}$, the bound \eqref{eqn:plane_bound} also holds with the roles of $k_1,k_2$ interchanged, and since $\hat\alpha_\mathrm{max}(s) > 0$ but $\hat\alpha_\mathrm{min}(s) < 0$, we may summarize the two bounds with
\begin{equation}\label{eqn:conductivity_bound_cases}
|c_{k_1k_2}|
\lesssim_{\varepsilon}
\begin{cases}
    \exp\bigl(-\hat\alpha_\mathrm{max}(s) \, k_1 - \hat\alpha_\mathrm{min}(s) \, k_2\bigr)
    & \text{if } k_1 \geq k_2, \\
    \exp\bigl(-\hat\alpha_\mathrm{min}(s)\, k_1 - \hat\alpha_\mathrm{max}(s) \, k_2\bigr)
    & \text{if } k_1 \leq k_2. \\
\end{cases}
\end{equation}
Rewriting \eqref{eqn:conductivity_bound_cases} in the form \eqref{eqn:frac_bound}, we arrive at the following theorem.

\begin{theorem}\label{thm:frac_bound}
    The Chebyshev coefficients $c_{k_1k_2}$ of $f(x_1,x_2) := \frac{1}{x_1 - x_2 + s}$ with $\real(s) \in [-1,1]$ are bounded by
    \begin{equation}\label{eqn:frac_bound}
        |c_{k_1,k_2}|
        \lesssim_{\varepsilon}
        \exp\bigl(
            -\hat\alpha_\mathrm{diag}(s) \, (k_1+k_2)
            -\hat\alpha_\mathrm{anti}(s) \, |k_1 - k_2|
        \bigr)
    \end{equation}
    where
    \[
    \hat\alpha_\mathrm{diag}(s) := \tfrac{1}{2} \, \Big(\hat\alpha_\mathrm{max}(s) + \hat\alpha_\mathrm{min}(s)\Big)
    \quad\text{and}\quad
    \hat\alpha_\mathrm{anti}(s) := \tfrac{1}{2} \, \Big(\hat\alpha_\mathrm{max}(s) - \hat\alpha_\mathrm{min}(s)\Big)
    .
    \]
\end{theorem}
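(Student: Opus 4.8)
The plan is to read off \eqref{eqn:frac_bound} from the contour-integral representation of the Chebyshev coefficients in Theorem~\ref{thm:formula} together with the decay estimate of Theorem~\ref{thm:bound}, following the contour-deformation argument already sketched in the discussion preceding the statement. I would start from the formula \eqref{eqn:conductivity_coeffs} for the coefficients $c_{k_1k_2}$ of $f(x_1,x_2)=\tfrac{1}{x_1-x_2+s}$ with the trivial choice $\Omega_1=\Omega_2=b_1=b_2=[-1,1]$, which is admissible because $\bigl([-1,1]+s\bigr)\cap[-1,1]=\{\}$ as soon as $\imag(s)\neq0$. The governing observation is that $f$ is analytic on a product domain $\Omega_1\times\Omega_2$ precisely when $(\Omega_1+s)\cap\Omega_2=\{\}$, which leaves a large region free for deforming the two contours; the aim is to pick the deformation maximising the decay rates that Theorem~\ref{thm:bound} extracts.

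Concretely, assuming $\imag(s)>0$ (as in the application $s=\omega+\iota\eta$; the case $\imag(s)<0$ is mirror-symmetric), I would hold $\Omega_1=[-1,1]$ fixed and successively: enlarge $\Omega_2$ downward into $[-1,1]\cup\hat D(s)$, slide the branch cut $b_2$ onto the lower boundary $\hat b^\star(s)$, shrink $\Omega_2$ onto $\hat b^\star(s)$, and finally inflate $\Omega_1$ to the Bernstein ellipse $E(\tilde\alpha_1)$ for an arbitrary $\tilde\alpha_1<\hat\alpha_\mathrm{max}(s)$. Each move is an application of Cauchy's integral theorem and is legitimate because $f$, together with the relevant branch of $\sqrt{x_2^2-1}$, remains analytic on the swept region — this is exactly what the definitions of $\hat\alpha_\mathrm{max}(s)$, $\hat D(s)$ and $\hat b^\star(s)$ encode, the sign flip of $\sqrt{x^2-1}$ across the relocated cut being governed by Lemma~\ref{lem:Joukowsky_properties}. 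Inserting the resulting pair of contours into Theorem~\ref{thm:bound} yields $|c_{k_1k_2}|\lesssim_\varepsilon\exp\bigl(-\hat\alpha_\mathrm{max}(s)\,k_1-\hat\alpha_\mathrm{min}(s)\,k_2\bigr)$, that is \eqref{eqn:plane_bound}, where $\hat\alpha_\mathrm{min}(s)=\min\alpha_{\hat b^\star(s)}(\partial\hat b^\star(s))<0$ since $\hat D(s)$ lies strictly outside $[-1,1]$.

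To conclude, I would invoke the symmetry of the problem: relabelling $x_1\leftrightarrow x_2$ (which transposes the coefficient indices) followed by $x\mapsto-x$ in both variables (which fixes $[-1,1]$ and only multiplies the coefficients by $(-1)^{k_1+k_2}$) maps $f$ to itself, hence $|c_{k_1k_2}|=|c_{k_2k_1}|$; therefore \eqref{eqn:plane_bound} also holds with $k_1,k_2$ interchanged, and the two bounds combine into \eqref{eqn:conductivity_bound_cases}. The asserted form \eqref{eqn:frac_bound} is then a one-line rearrangement: substituting $k_1=\tfrac12\bigl((k_1+k_2)+(k_1-k_2)\bigr)$ and $k_2=\tfrac12\bigl((k_1+k_2)-(k_1-k_2)\bigr)$ into the exponent turns $-\hat\alpha_\mathrm{max}k_1-\hat\alpha_\mathrm{min}k_2$ (for $k_1\ge k_2$), and the mirrored expression (for $k_1\le k_2$), into $-\hat\alpha_\mathrm{diag}(s)(k_1+k_2)-\hat\alpha_\mathrm{anti}(s)\,|k_1-k_2|$ with $\hat\alpha_\mathrm{diag},\hat\alpha_\mathrm{anti}$ as defined in the statement.

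I expect the only genuinely delicate part to be the bookkeeping in the contour deformations: verifying that $f$ and the branch $\sqrt[b_2]{x_2^2-1}$ stay analytic throughout the region swept as $\Omega_2$ and $b_2$ are moved, and describing the geometry of $\hat D(s)$ and $\hat b^\star(s)$ precisely enough to identify $-\hat\alpha_\mathrm{min}(s)$ with the parameter of the smallest Bernstein ellipse containing $\hat D(s)$ and to confirm $\hat\alpha_\mathrm{min}(s)<0$. The upgrade from a bound at every rate $\tilde\alpha_1<\hat\alpha_\mathrm{max}(s)$ to the $\lesssim_\varepsilon$-statement at the endpoint rate is routine within the $\mathcal{O}_\varepsilon$-formalism recalled earlier, and the closing algebra is immediate.
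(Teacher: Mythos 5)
Your proposal follows the paper's own argument essentially verbatim: the contour formula of Theorem~\ref{thm:formula} with $\Omega_1=\Omega_2=b_1=b_2=[-1,1]$, the successive Cauchy deformations through $\hat D(s)$ and $\hat b^\star(s)$ leading to \eqref{eqn:plane_bound} via Theorem~\ref{thm:bound}, the symmetry argument to interchange $k_1,k_2$, and the change of variables $(k_1+k_2,\,k_1-k_2)$ to obtain \eqref{eqn:frac_bound}. The only cosmetic difference is that you justify $|c_{k_1k_2}|=|c_{k_2k_1}|$ explicitly through the map $(x_1,x_2)\mapsto(-x_2,-x_1)$, where the paper simply appeals to symmetry; this is a valid and slightly more explicit rendering of the same step.
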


A closer inspection of the above argument reveals that the bound \eqref{eqn:frac_bound} holds for any function $f(x_1,x_2) = \frac{g(x_1,x_2)}{x_1 - x_2 + s}$ as long as $g(x_1,x_2)$ is analytic on $E\bigl(\hat\alpha_\mathrm{max}(s)\bigr)^2$,
and in particular it applies to the conductivity function $F_\zeta(E_1,E_2) = \frac{f_\mathrm{temp}(E_1,E_2)}{E_1 - E_2 + \omega + \iota\eta}$ if the singularities $S_\mathrm{temp}$ of $f_\mathrm{temp}(E_1,E_2)$ from \eqref{eqn:fermi_singularities} satisfy
\[
E\bigl(\hat\alpha_\mathrm{max}(\omega + \iota\eta)\bigr)^2 \cap S_\mathrm{temp} = \{\}
\quad\iff\quad
E\bigl(\hat\alpha_\mathrm{max}(\omega + \iota\eta)\bigr) \cap S^{(1)}_\mathrm{temp} = \{\}
,
\]
i.e.,\ if $\zeta$ is relaxation-constrained.
Furthermore, the argument and hence the bound \eqref{eqn:frac_bound} can be extended to the mixed- and temperature-constrained cases if we replace $\hat\alpha_\mathrm{max}(s)$ with
\begin{equation}\label{eqn:alpha_max}
\alpha_\mathrm{max}(\zeta)
:=
\min\bigl\{
    \alpha_{[-1,1]}(1 - |\omega| + \iota \eta)
    , \,
    \alpha_{[-1,1]}\bigl(E_F + \tfrac{\pi\iota}{\beta}\bigr)
\bigr\}
,
\end{equation}
which is the parameter of the blue ellipses in Figure \ref{fig:contours_with_temperature}.
This leads to new variables {$D(\zeta)$ and $b^\star(\zeta)$} defined analogously to {$\hat D(s)$ and $\hat b^\star(s)$}, respectively, but starting from $\alpha_\mathrm{max}(\zeta)$ instead of $\hat \alpha_\mathrm{max}(s)$, i.e.\
\begin{align*}
D(\zeta)
&:=
\Bigl(E\big(\alpha_\mathrm{max}(\zeta)\big) + \omega + \eta\iota\Bigr)
\cap
\bigl\{x \mid \imag(x) \leq 0\bigr\}
,\\
b^\star(\zeta)
&:=
\bigl([-1,1] \setminus D(\zeta) \bigr)
\cup
\{x \in \partial D(\zeta) \mid \imag(x) < 0\}
.
\end{align*}
Finally, we generalise $\hat \alpha_\mathrm{min}(s)$ to
\begin{equation}\label{eqn:alpha_min}
\alpha_\mathrm{min}(\zeta)
=
\min\bigl\{
\alpha_{b^\star(\zeta)}\bigl(x^\star(\zeta) + 0\iota\bigr)
,
\alpha_{[-1,1]}\bigl(E_F + \tfrac{\pi\iota}{\beta}\bigr)
\bigr\}
,
\end{equation}
where $x^\star(\zeta)$ is given by
\begin{equation}\label{eqn:xstar}
    x^\star(\zeta)
    :=
    \argmin_{x \in \partial E(\alpha_\mathrm{max}(\zeta)) + \omega + \iota \eta} \alpha_{b^\star(\zeta)}(x + 0\iota)
    .
\end{equation}
Note that $\alpha_\mathrm{min}(\zeta)$ is the parameter of the green ellipses in Figure \ref{fig:contours_with_temperature}, and $x^\star(\zeta)$ is indicated by the purple dots in Figure \ref{fig:contours_with_temperature}.

With the above notation, we can now formally describe the classification into relaxation-, mixed- and temperature-constrained parameters $\zeta$, and we can generalize Theorem \ref{thm:frac_bound} to Theorem \ref{thm:conductivity_coeffs_bound_2} below.

\begin{definition}\label{def:classification}
    \[
    \text{We call $\zeta$ }
    \begin{cases}
        \text{relaxation-constrained}
        & \text{if }
        \alpha_{[-1,1]}(1 - |\omega| + \iota \eta)
        \leq
        \alpha_{[-1,1]}\bigl(E_F + \tfrac{\pi\iota}{\beta}\bigr)
        ,\\
        \text{temperature-constrained}
        & \text{if }
        \alpha_{[-1,1]}\bigl(E_F + \tfrac{\pi\iota}{\beta}\bigr)
        \leq
        \alpha_{b^\star(\zeta)}\bigl(x^\star(\zeta) + 0\iota\bigr)
        ,\\
        \text{mixed-constrained}
        &
        \text{otherwise}
        .
    \end{cases}
    \]
\end{definition}

\begin{theorem}\label{thm:conductivity_coeffs_bound_2}
    The Chebyshev coefficients $c_{k_1k_2}$ of $F_\zeta(E_1,E_2)$ are bounded by
    \[
        |c_{k_1,k_2}|
        \lesssim_{\varepsilon}
        \exp\bigl(
            -\alpha_\mathrm{diag}(\zeta) \, (k_1+k_2)
            -\alpha_\mathrm{anti}(\zeta) \, |k_1 - k_2|
        \bigr)
    \]
    where
    \[
    \alpha_\mathrm{diag}(\zeta) := \tfrac{1}{2} \, \Big(\alpha_\mathrm{max}(\zeta) + \alpha_\mathrm{min}(\zeta)\Big)
    \quad\text{and}\quad
    \alpha_\mathrm{anti}(\zeta) := \tfrac{1}{2} \, \Big(\alpha_\mathrm{max}(\zeta) - \alpha_\mathrm{min}(\zeta)\Big)
    .
    \]
\end{theorem}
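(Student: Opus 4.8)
The plan is to re-run, essentially verbatim, the contour-deformation argument of Subsection~\ref{ssec:aniso_chebdecay} that established Theorem~\ref{thm:frac_bound}, now for the full conductivity function $F_\zeta(E_1,E_2) = f_\mathrm{temp}(E_1,E_2)/(E_1 - E_2 + \omega + \iota\eta)$ and with the refined quantities $\alpha_\mathrm{max}(\zeta)$, $D(\zeta)$, $b^\star(\zeta)$, $x^\star(\zeta)$, $\alpha_\mathrm{min}(\zeta)$ in place of their hatted counterparts $\hat\alpha_\mathrm{max}(s)$, $\hat D(s)$, $\hat b^\star(s)$, $\hat\alpha_\mathrm{min}(s)$. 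The only genuinely new ingredient is that every contour deformation must also avoid the Fermi--Dirac poles $S^{(1)}_\mathrm{temp}$ of $f_\mathrm{temp}$, and the three alternatives of Definition~\ref{def:classification} record which singularity is binding at each step.

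First I would observe that $F_\zeta$ is analytic on $[-1,1]^2$ (since $\eta > 0$ keeps $S_\mathrm{relax}$ off $[-1,1]^2$ and $S^{(1)}_\mathrm{temp}$ lies strictly off the real line), so Theorem~\ref{thm:formula} expresses $c_{k_1k_2}$ as the double zero-width contour integral over $\partial[-1,1] \times \partial[-1,1]$ with branch cuts $b_1 = b_2 = [-1,1]$. Next I would deform the $x_2$-contour in three sub-moves, exactly as in the hatted argument: enlarge $\Omega_2$ from $[-1,1]$ to $[-1,1] \cup D(\zeta)$ (legitimate because, for $x_1 \in [-1,1]$, the relaxation pole $x_2 = x_1 + \omega + \iota\eta$ lies in $[-1,1] + \omega + \iota\eta$, which by construction of $D(\zeta)$ misses $[-1,1] \cup \overline{D(\zeta)}$, while $\overline{D(\zeta)}$ avoids $S^{(1)}_\mathrm{temp}$ in the regime $|\omega| \lesssim \eta$, $\eta \to 0$); slide the branch cut $b_2$ down to the lower boundary $b^\star(\zeta)$ of $\Omega_2$ and contract $\Omega_2$ onto $b^\star(\zeta)$; and then, only when $\zeta$ is temperature-constrained, expand $\Omega_2$ back outward to $\partial E(\tilde\alpha_2)$ for $\tilde\alpha_2 < \alpha_{[-1,1]}(E_F + \tfrac{\pi\iota}{\beta})$, which crosses neither $S^{(1)}_\mathrm{temp}$ (by the choice of $\tilde\alpha_2$) nor the relaxation pole, since in that regime $\beta \gtrsim \eta^{-1}$ leaves enough vertical room between $E(\tilde\alpha_1) + \omega + \iota\eta$ and $E(\tilde\alpha_2)$ (so Rule~\ref{rule:overlap} is automatic).

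Having fixed the final $x_2$-contour, I would then deform $\Omega_1$ from $[-1,1]$ out to $\partial E(\tilde\alpha_1)$ for any $\tilde\alpha_1 < \alpha_\mathrm{max}(\zeta)$: analyticity of $F_\zeta(\cdot, x_2)$ on the annular region $E(\tilde\alpha_1) \setminus [-1,1]$ holds because the relaxation pole $x_1 = x_2 - (\omega + \iota\eta)$ lies on $b^\star(\zeta) - (\omega + \iota\eta)$, which sits outside $E(\alpha_\mathrm{max}(\zeta))$ by the first term of \eqref{eqn:alpha_max} together with the construction of $b^\star(\zeta)$, while $S^{(1)}_\mathrm{temp}$ is excluded by the second term of \eqref{eqn:alpha_max}. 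Applying Theorem~\ref{thm:bound} with the final contours $\partial E(\tilde\alpha_1) \times \partial\Omega_2$ then gives $|c_{k_1k_2}| \lesssim_\varepsilon \exp(-\tilde\alpha_1 k_1 - \tilde\alpha_2 k_2)$; pushing $\tilde\alpha_1 \to \alpha_\mathrm{max}(\zeta)$ and, using Lemma~\ref{lem:Joukowsky_properties} to compute $\min \alpha_{b^\star(\zeta)}$ on the contracted contour (resp.\ via the choice of $\tilde\alpha_2$ in the temperature case), $\tilde\alpha_2 \to \alpha_\mathrm{min}(\zeta)$ as in \eqref{eqn:alpha_min}. Finally, by the symmetry of $F_\zeta$ under $E_1 \leftrightarrow E_2$ the same bound holds with $k_1, k_2$ interchanged; combining the two exactly as \eqref{eqn:plane_bound} was merged into \eqref{eqn:conductivity_bound_cases} and rewriting in terms of $k_1 + k_2$ and $|k_1 - k_2|$ yields the claimed estimate with $\alpha_\mathrm{diag}(\zeta) = \tfrac12(\alpha_\mathrm{max}(\zeta) + \alpha_\mathrm{min}(\zeta))$ and $\alpha_\mathrm{anti}(\zeta) = \tfrac12(\alpha_\mathrm{max}(\zeta) - \alpha_\mathrm{min}(\zeta))$.

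The hard part will be the geometric bookkeeping behind these deformations --- in particular, confirming that the enlarged region $\overline{D(\zeta)}$ never captures the lower Fermi--Dirac pole $E_F - \tfrac{\pi\iota}{\beta}$ (this is exactly where the hypotheses $|\omega| \lesssim \eta$ and $\eta \to 0$ enter), that the contracted contour $b^\star(\zeta) - (\omega + \iota\eta)$ stays clear of $E(\alpha_\mathrm{max}(\zeta))$, and that in the temperature-constrained case the outward re-expansion of $\Omega_2$ never crosses the moving relaxation pole. Each of these reduces to an elementary comparison of the Bernstein-ellipse half-height $\sinh\alpha_\mathrm{max}(\zeta)$ against $\eta$ and $\pi/\beta$, but getting the critical cases ($\beta \sim \eta^{-1/2}$ and $\beta \sim \eta^{-1}$) exactly right is precisely what the case split of Definition~\ref{def:classification} is designed to handle, and the explicit formulae for $\alpha_\mathrm{max}(\zeta)$ and $\alpha_\mathrm{min}(\zeta)$ in \eqref{eqn:alpha_max} and \eqref{eqn:alpha_min} are the outcome of that analysis.
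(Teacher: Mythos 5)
Your proposal follows essentially the same route as the paper: the paper proves Theorem~\ref{thm:conductivity_coeffs_bound_2} precisely by observing that the contour-deformation argument behind Theorem~\ref{thm:frac_bound} goes through for $F_\zeta = f_\mathrm{temp}/(E_1-E_2+\omega+\iota\eta)$ once the deformations are additionally constrained by $S^{(1)}_\mathrm{temp}$, which is exactly what replacing $\hat\alpha_\mathrm{max}$, $\hat D$, $\hat b^\star$, $\hat\alpha_\mathrm{min}$ by $\alpha_\mathrm{max}(\zeta)$, $D(\zeta)$, $b^\star(\zeta)$, $\alpha_\mathrm{min}(\zeta)$ encodes, followed by Theorem~\ref{thm:bound}, the $E_1 \leftrightarrow E_2$ symmetry, and the diag/anti rewriting. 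The geometric checks you flag (that $D(\zeta)$ avoids the lower Fermi--Dirac pole, that $b^\star(\zeta)-(\omega+\iota\eta)$ stays outside $E(\alpha_\mathrm{max}(\zeta))$, and that Rule~\ref{rule:overlap} holds in the temperature-constrained case by Definition~\ref{def:classification}) are exactly the points the paper also leaves implicit, so your argument is correct at the same level of rigor as the paper's.
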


\subsection{Asymptotics}
\label{ssec:asymptotics}

To complete the proof of Theorem \ref{thm:conductivity_coeffs_bound}, it remains to {determine the asymptotic scaling of $\alpha_\mathrm{diag}(\zeta)$ and $\alpha_\mathrm{anti}(\zeta)$ and the asymptotic parameter classification.
We will do so in Subsubsections \ref{sssec:alpha_scaling} and \ref{sssec:classification} using the following auxiliary result.
}

\begin{lemma}\label{lem:Joukowsky_asymptotics}
    It holds that
    \begin{alignat}{2}
        \label{eqn:O_im}
        \alpha_{[-1,1]}(x)
        &=
        \Theta\big(|\imag(x)|\big)
        \qquad
        &&\text{for } x \to x^\star \text{ with } x^\star \in (-1,1)
        , \\
        \label{eqn:O_sqrt}
        \alpha_{[-1,1]}(x)
        &=
        \Theta\bigl(\sqrt{|x \mp 1|}\bigr)
        \qquad
        &&\text{for } x \to \pm 1 \text{ with } \pm\real(x)-1 \geq C |\imag(x)|
        .
    \end{alignat}
\end{lemma}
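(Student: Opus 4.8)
The plan is to work throughout with the closed form $\alpha_{[-1,1]}(x) = \log\bigl|\phi^{-1}_{[-1,1]}(x)\bigr|$, $\phi^{-1}_{[-1,1]}(x) = x + \sqrt[{[-1,1]}]{x^2-1}$. Since $\phi^{-1}_{[-1,1]}$ is holomorphic and nowhere zero on $\mathbb{C}\setminus[-1,1]$ (its product with the other branch is $1$, and $x = -\sqrt{x^2-1}$ has no solution), $\alpha_{[-1,1]}$ is the real part of a local holomorphic branch of $\log\phi^{-1}_{[-1,1]}$ there; a one-line computation gives $\tfrac{d}{dx}\log\phi^{-1}_{[-1,1]}(x) = \bigl(\sqrt[{[-1,1]}]{x^2-1}\bigr)^{-1}$, hence $\bigl|\nabla\alpha_{[-1,1]}(x)\bigr| = |x^2-1|^{-1/2}$ for $x\notin[-1,1]$. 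I will also use the elementary symmetries $\alpha_{[-1,1]}(x) = \alpha_{[-1,1]}(\bar x) = \alpha_{[-1,1]}(-x)$, which hold because the Bernstein ellipses $E(\alpha)$ are symmetric about both coordinate axes.

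For \eqref{eqn:O_im}, I would fix $x^\star\in(-1,1)$ and a small closed box $\overline B$ around $x^\star$ with $\overline B\cap\{|\real(x)|\ge 1\}=\emptyset$. On $\overline B$ the quantity $|x^2-1|$ is bounded above and below by positive constants, so $|\nabla\alpha_{[-1,1]}| = \Theta(1)$ on $\overline B\setminus[-1,1]$; moreover $\alpha_{[-1,1]}$ vanishes on $(-1,1)\cap B$, is even in $\imag(x)$, and restricts to a real-analytic function up to the real axis on each half-box $B^\pm := B\cap\{\pm\imag(x)\ge 0\}$ (the only singularity of $\sqrt[{[-1,1]}]{x^2-1}$ is at $x=\pm1\notin\overline B$). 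The upper bound $\alpha_{[-1,1]}(x)\le M\,|\imag(x)|$ follows by integrating $\nabla\alpha_{[-1,1]}$ along the vertical segment from $\real(x)$ to $x$ and using $\alpha_{[-1,1]}(\real(x))=0$. For the lower bound, note that since $\alpha_{[-1,1]}\equiv 0$ on $(-1,1)$ its gradient on the real axis is purely vertical with $|\partial_{\imag(x)}\alpha_{[-1,1]}| = |\nabla\alpha_{[-1,1]}| = (1-x_0^2)^{-1/2}$, and the sign must be nonnegative (else $\alpha_{[-1,1]}$ would be negative slightly off the axis); by continuity of $\partial_{\imag(x)}\alpha_{[-1,1]}$ on a slightly smaller box $B'^\pm$ this derivative stays $\ge m'>0$, so $\alpha_{[-1,1]}(x_0+\iota y) = \int_0^{y}\partial_t\alpha_{[-1,1]}(x_0+\iota t)\,dt\ge m'|y|$ for $x$ near $x^\star$. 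Letting $x\to x^\star$ gives $\alpha_{[-1,1]}(x) = \Theta(|\imag(x)|)$.

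For \eqref{eqn:O_sqrt}, by the symmetry $\alpha_{[-1,1]}(-x) = \alpha_{[-1,1]}(x)$ it suffices to treat $x\to +1$. Set $\zeta := x-1$; the hypothesis $\real(x)-1\ge C|\imag(x)|$ places $\zeta$ in the cone $\{\real(\zeta)\ge C|\imag(\zeta)|\}\subset\{|\arg\zeta|\le\arctan(1/C)<\tfrac{\pi}{2}\}$, which stays away from the branch cut $[-1,1]$ (the ray $\arg\zeta=\pi$ emanating from $x=1$). Writing $x^2-1 = \zeta(2+\zeta)$ and taking $\sqrt[{[-1,1]}]{x^2-1} = \sqrt{\zeta}\,\sqrt{2+\zeta}$ with principal roots — legitimate since $\arg\zeta$ is bounded away from $\pm\pi$ and $2+\zeta$ stays near $2$, and this is the branch positive for $x>1$ real — one obtains $\phi^{-1}_{[-1,1]}(x) = 1 + \sqrt{2\zeta} + O(|\zeta|)$, hence $\alpha_{[-1,1]}(x) = \log\bigl(1 + \real\sqrt{2\zeta} + O(|\zeta|)\bigr) = \real\sqrt{2\zeta} + O(|\zeta|)$ as $\zeta\to 0$. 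Finally $\real\sqrt{2\zeta} = \sqrt{2|\zeta|}\,\cos\bigl(\tfrac12\arg\zeta\bigr)$ with $\cos\bigl(\tfrac12\arg\zeta\bigr)\in\bigl(\tfrac{1}{\sqrt2},1\bigr]$ on the cone, so $\real\sqrt{2\zeta} = \Theta(\sqrt{|\zeta|})$ and the $O(|\zeta|)$ remainder is $o(\sqrt{|\zeta|})$; therefore $\alpha_{[-1,1]}(x) = \Theta(\sqrt{|x-1|})$ (with implied constants depending on $C$).

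The routine parts are the logarithmic-derivative identity, the fundamental-theorem-of-calculus estimates, and the Taylor expansion at $x=1$. The part needing care — and the main obstacle — is the branch bookkeeping: one must verify that the prescribed branch of $\sqrt[{[-1,1]}]{x^2-1}$ (normalized by $\phi^{-1}_{[-1,1]}(\infty)=\infty$) is $+\iota\sqrt{1-x_0^2}$ just above $(-1,1)$ and the principal $\sqrt{2\zeta}$ inside the cone at $x=1$, and keep track of the fact that $\alpha_{[-1,1]}$ is only Lipschitz (not $C^1$) across $[-1,1]$, having the antisymmetric corner that is precisely responsible for the $\Theta(|\imag(x)|)$ behavior. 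The cone hypothesis in \eqref{eqn:O_sqrt} is exactly what keeps $\cos(\tfrac12\arg\zeta)$ bounded below, i.e.\ what keeps the approach direction off the branch cut; once the branch is pinned down consistently, both estimates reduce to the elementary computations above.
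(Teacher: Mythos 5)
Your proof is correct and follows essentially the same route as the paper: for \eqref{eqn:O_im} both arguments reduce to the fact that $\alpha_{[-1,1]}$ vanishes on the cut and has one-sided normal derivative $(1-(x^\star)^2)^{-1/2}=\sin(\theta^\star)^{-1}$ there (you get it from the identity $|\nabla\alpha_{[-1,1]}(x)|=|x^2-1|^{-1/2}$ plus a fundamental-theorem-of-calculus estimate, the paper from the Joukowsky parametrization $\varphi(\alpha,\theta)$ and the inverse function theorem), and for \eqref{eqn:O_sqrt} both expand $\log\phi^{-1}_{[-1,1]}$ at the endpoint (your variable $\zeta=x\mp1$ versus the paper's $w=\sqrt{x\mp1}$) and use the cone condition to keep the real part of the leading square-root term comparable to its modulus. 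Your explicit bookkeeping of the branch normalization and of the Lipschitz corner across the cut is somewhat more careful than the paper's sketch, but it does not change the substance of the argument.
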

\begin{proof}
    \eqref{eqn:O_im}:
    $\alpha_{[-1,1]}(x) = \real\bigl(\log \phi^{-1}_{[-1,1]}(x) \bigr)$ is {symmetric about the real axis and} harmonic on either side of the branch cut {at any $x^\star \in (-1,1)$; hence we can write}
    \[
    a_{[-1,1]}(x)
    =
    a_{[-1,1]}(x^\star)
    +
    \tfrac{\partial a_{[-1,1]}}{\partial \real(x)}(x^\star) \, \real(x - x^\star)
    +
    \tfrac{\partial a_{[-1,1]}}{\partial \imag(x)}(x^\star + 0\iota) \, |\imag(x)|
    +
    \mathcal{O}\bigl(|x-x^\star|^2\bigr)
    .
    \]
    Since $\alpha_{[-1,1]}(x^\star) = 0$ for all $x^\star \in (-1,1)$, the constant term vanishes, and writing $\alpha_{[-1,1]}(x) = \big(\varphi^{-1}\bigl(\real(x),\imag(x)\bigr)\big)_1$ with
    \begin{align*}
        \varphi(\alpha,\theta)
        &:=
        \begin{pmatrix}
            \real\bigl(\phi(\exp(\alpha + \iota \, \theta))\bigr)
            \\
            \imag\bigl(\phi(\exp(\alpha + \iota \, \theta))\bigr)
        \end{pmatrix}
        =
        \begin{pmatrix}
            \cosh(\alpha) \cos(\theta)
            \\
            \sinh(\alpha) \sin(\theta)
        \end{pmatrix}
        ,
        \\
        \nabla \varphi(0,\theta)
        &=
        \begin{pmatrix}
            0 & {-\sin(\theta)} \\
            \sin(\theta) & 0
        \end{pmatrix}
        ,
    \end{align*}
    we conclude that
    \begin{align*}
        \tfrac{\partial \alpha_{[-1,1]}}{\partial \real(x)} (x^\star)
        &=
        \Big(\nabla\varphi(0,\theta^\star)^{-1}\Big)_{11}
        =
        0
        ,\\
        \tfrac{\partial \alpha_{[-1,1]}}{\partial \imag(x)} (x^\star+0\iota)
        &=
        \Big(\nabla\varphi(0,\theta^\star)^{-1}\Big)_{12}
        =
        \sin(\theta^\star)^{-1}
        \neq
        0
    \end{align*}
    where $\theta^\star = \acos\bigl(\real(x^\star)\bigr) \in {(0,\pi)}$.

    \eqref{eqn:O_sqrt}:
    We compute
    \begin{align*}
        \alpha\bigl(w^2 \pm 1\bigr)
        &=
        \Real\bigg(\log \Big( w^2 \pm 1 + \sqrt{(w^2 \pm 1)^2 - 1} \Big) \bigg)
        \\&=
        \Real\bigg(\log \Big(1 \mp w \, \sqrt{w^2 \pm 2} \mp w^2 \Big) \bigg)
        \\&=
        \real\Big(\mp \sqrt{\pm 2} \, w + \mathcal{O}\bigl(w^2\bigr)\Big)
        \quad \text{for } w \to 0
        ,
    \end{align*}
    where by $\sqrt{(w^2 \pm 1)^2 - 1}$ we mean a $w$-dependent combination of the two branches of the square-root function such that $\alpha\bigl(w^2 \pm 1\bigr)$ is harmonic around $w = 0$.
    The claim follows by substituting $w = \sqrt{x \mp 1}$ and noting that $\sqrt{\pm 2} \, \sqrt{x \mp 1}$ is bounded away from the imaginary axis as long as $x$ is bounded away from $(-1,1)$.
\end{proof}

\subsubsection{Scaling of $\alpha_\mathrm{diag}(\zeta)$ and $\alpha_\mathrm{anti}(\zeta)$}\label{sssec:alpha_scaling}

For temperature-constrained $\zeta$, we have
\[
\alpha_\mathrm{max}(\zeta)
=
\alpha_\mathrm{min}(\zeta)
=
\alpha_{[-1,1]}\bigl(E_F + \tfrac{\pi\iota}{\beta}\bigr)
=
\Theta(\beta^{-1})
\]
and hence
\[
\alpha_\mathrm{diag}(\zeta)
=
\alpha_\mathrm{max}(\zeta)
+
\alpha_\mathrm{min}(\zeta)
=
\Theta(\beta^{-1})
,\qquad
\alpha_\mathrm{anti}(\zeta)
=
\alpha_\mathrm{max}(\zeta)
-
\alpha_\mathrm{min}(\zeta)
=
0
.
\]

The remainder of this subsubsection establishes analogous estimates for relaxation- and mixed-constrained $\zeta$. In this case, we have
\begin{alignat}{2}
\alpha_\mathrm{max}(\zeta)
&=
\alpha_{[-1,1]}\bigl(x^\star(\zeta) - \omega - \eta\iota\bigr)
&&=
\Theta\bigl(\eta - \imag(x^\star(\zeta))\bigr)
\label{eqn:alpha_max_relax_mix}
, \\
\alpha_\mathrm{min}(\zeta)
&=
\alpha_{b^\star(\zeta)}\bigl(x^\star(\zeta) + 0\iota\bigr)
&&=
\Theta\bigl(\imag(x^\star(\zeta))\bigr)
,
\label{eqn:alpha_min_relax_mix}
\end{alignat}
which may be verified as follows.
\begin{itemize}
    \item The first expression for $\alpha_\mathrm{max}(\zeta)$ is an immediate consequence of the definition of $x^\star(\zeta)$ in  \eqref{eqn:xstar}.
    The second expression follows from \eqref{eqn:O_im}\footnote{
    We implicitly assume here that $x^\star(\zeta) - \omega - \eta\iota$ approaches some $x^\star \in (-1,1)$ in the limit considered in Theorem \ref{thm:conductivity_coeffs_bound} and not $x^\star(\zeta) - \omega - \eta\iota \to \pm 1$.
    The reader may easily convince themself that this is true using illustrations like the ones provided in Figure \ref{fig:contours_with_temperature}.
    A rigorous proof of this result is beyond the scope of this work.
    } after observing that $\imag\bigl(x^\star(\zeta)\bigr) < 0$ or $\imag\bigl(x^\star(\zeta)\bigr) < \eta$ and hence $\bigl|\imag(x^\star(\zeta)) - \eta\bigr| = \eta - \imag\bigl(x^\star(\zeta)\bigr)$.
    \item The first expression for $\alpha_\mathrm{min}(\zeta)$ is the definition of $\alpha_\mathrm{min}(\zeta)$ in \eqref{eqn:alpha_min} simplified for the relaxation- and mixed-constrained cases.
    The second expression follows by observing that $x^\star(\zeta) + 0\iota$ is always above the branch cut $b^\star(\zeta)$ and hence \eqref{eqn:O_im} applies without the absolute value on the right-hand side.
\end{itemize}

It follows from \eqref{eqn:alpha_max_relax_mix}, \eqref{eqn:alpha_min_relax_mix} that
\[
\alpha_\mathrm{diag}(\zeta)
=
\Theta\bigl(\eta - \imag(x^\star(\zeta))\bigr)
+
\Theta\bigl(\imag(x^\star(\zeta))\bigr)
=
\Theta(\eta)
,
\]
where we note that the two $\Theta\bigl(\imag(x^\star(\zeta))\bigr)$-terms indeed cancel since they arise from Taylor expansions of the same function $\alpha_{b}(x)$ around the same point $x = 0$.

To determine the asymptotic scaling of $\alpha_\mathrm{anti}(\zeta)$, we compare
\[
\alpha_\mathrm{max}(\zeta)
=
\left\{
\begin{aligned}
    \alpha_{[-1,1]}(1 - |\omega| + \iota\eta)
    &=
    \Theta(\eta^{1/2})
    &&
    \text{if $\zeta$ is relaxation-constrained}
    ,\\
    \alpha_{[-1,1]}(E_F + \tfrac{\pi\iota}{\beta})
    &=
    \Theta(\beta^{-1})
    &&
    \text{if $\zeta$ is mixed-constrained}
    .\\
\end{aligned}
\right.
\]
against \eqref{eqn:alpha_max_relax_mix} to conclude that
\[
\eta - \imag(x^\star(\zeta))
=
\begin{cases}
    \Theta(\eta^{1/2}) & \text{if $\zeta$ is relaxation-constrained}, \\
    \Theta(\beta^{-1}) & \text{if $\zeta$ is mixed-constrained}. \\
\end{cases}
\]
In the relaxation-constrained case, we thus have
\[
\Theta\bigl(\eta - \imag(x^\star(\zeta))\bigr)
=
-\Theta\bigl(\imag(x^\star(\zeta))\bigr) = \Theta(\eta^{1/2})
\]
and hence
\[
\alpha_\mathrm{anti}(\zeta)
=
\Theta\bigl(\eta - \imag(x^\star(\zeta))\bigr)
-
\Theta\bigl(\imag(x^\star(\zeta))\bigr)
=
\Theta\bigl(\eta^{1/2}\bigr)
.
\]
In the mixed-constrained case, we have
\[
\alpha_\mathrm{anti}(\zeta)
=
\Theta\bigl(\eta - \imag(x^\star(\zeta))\bigr)
-
\Theta\bigl(\imag(x^\star(\zeta))\bigr)
=
\mathcal{O}(\beta^{-1})
\]
where we used that $\eta \geq 0$ and hence $\eta - 2\imag(x^\star(\zeta)) \leq 2 \, \bigl(\eta - \imag(x^\star(\zeta))\bigr)$.
We remark that indeed $\alpha_\mathrm{anti}(\zeta) \neq \Theta(\beta^{-1})$ since $\alpha_\mathrm{anti}(\zeta) \to 0$ as $\beta$ approaches the finite value where $\zeta$ transitions into the temperature-constrained regime.

\subsubsection{Parameter classification}\label{sssec:classification}

In the limit considered in Theorem \ref{thm:conductivity_coeffs_bound}, Lemma \ref{lem:Joukowsky_asymptotics} yields
\[
\alpha_{[-1,1]}(1 - |\omega| + \iota \eta)
=
\Theta\bigl(\eta^{1/2}\bigr)
,\qquad
\alpha_{[-1,1]}\bigl(E_F + \tfrac{\pi\iota}{\beta}\bigr)
=
\Theta\bigl(\beta^{-1}\bigr)
;
\]
hence
\[
\Theta\bigl(\eta^{1/2}\bigr) \leq \Theta\bigl(\beta^{-1}\bigr)
\quad\iff\quad
\beta \lesssim \eta^{-1/2}
\]
if $\zeta$ is relaxation-constrained, and
\[
\Theta\bigl(\eta^{1/2}\bigr) \geq \Theta\bigl(\beta^{-1}\bigr)
\quad\iff\quad
\eta^{-1/2}
\lesssim
\beta
\]
if $\zeta$ is mixed-constrained.
To obtain the second bound for the mixed-constrained case, we observe that for given $E_F$, $\omega$ and $\eta$, the largest $\beta$ such that $\zeta$ is still mixed-constrained must be such that $E_F + \frac{\pi\iota}{\beta}$ and $\tfrac{\omega+\iota\eta}{2}$ lie on the same Bernstein ellipse; hence for mixed-constrained $\zeta$ we have
\[
\Theta(\beta^{-1})
=
\alpha_{[-1,1]}\bigl(E_F + \tfrac{\pi\iota}{\beta}\bigr)
\geq
\alpha_{[-1,1]}\bigl(\tfrac{\omega+\iota\eta}{2}\bigr)
=
\Theta(\eta)
\quad\iff\quad
\beta \lesssim \eta^{-1}
.
\]
Finally, for temperature-constrained $\zeta$, we must have
\[
\tfrac{\pi}{\beta} < \eta
\quad\iff\quad
\eta^{-1} \lesssim \beta
\]
(i.e.\ the first pole of the Fermi-Dirac function must lie between the two intervals in Figure \ref{fig:contours_with_temperature})
since otherwise $E(\eta) := E\bigl(\alpha_{[-1,1]}(\iota\eta)\bigr)$ and $E(0) = [-1,1]$ are two ellipses such that $E(\eta) + \omega + \iota\eta$ and $[-1,1]$ touch in a single point and neither $E(\eta)$ nor $[-1,1]$ intersect with the set of Fermi-Dirac singularities $S^{(1)}_\mathrm{temp}$, contradicting the assumption that $\zeta$ is temperature-constrained.

\section{Other Proofs: Numerics}

\subsection{Proof of Theorem \ref{thm:error_estimate}}\label{ssec:error_bound_proof}

Let us introduce
\[
b_{k_1k_2}
:=
\exp\bigl(
    -\alpha_\mathrm{max}(\zeta) \, k_1
    -\alpha_\mathrm{min}(\zeta) \, k_2
\bigr)
\]
with
\[
\alpha_\mathrm{max}(\zeta) := \alpha_\mathrm{diag}(\zeta) + \alpha_\mathrm{anti}(\zeta)
,\qquad
\alpha_\mathrm{min}(\zeta) := \alpha_\mathrm{diag}(\zeta) - \alpha_\mathrm{anti}(\zeta)
.
\]
Using Lemma \ref{lem:conductivity_approx_bound} and the bound \eqref{eqn:conductivity_coeffs_bound}, we obtain
\begin{align*}
\big| \tilde \sigma_\ell^r[b] - \sigma_\ell^r[b] \big|
&\lesssim
\sum_{(k_1,k_2) \in \mathbb{N}^2 \setminus K(\tau)} |c_{k_1k_2}|
\\&\leq
2\,C(\zeta) \sum_{(k_1,k_2) \in \mathbb{N}^2 \setminus K(r) \land k_1 \geq k_2} b_{k_1k_2}
\\&=
2\,C(\zeta) \,
\Bigg(
\underbrace{\sum_{k_2 = 0}^{K_2(\tau)-1} \sum_{k_1 = K_1(\tau,k_2)}^\infty b_{k_1k_2}}_{A}
+
\underbrace{\sum_{k_2 = K_2(\tau)}^{\infty} \sum_{k_1 = k_2}^\infty b_{k_1k_2}}_{B}
\Bigg)
\end{align*}
where
\[
K_2(\tau) := \left\lceil
    \frac{-\log(\tau)}%
    {2\,\alpha_\mathrm{diag}(\zeta)}
\right\rceil
,\qquad
K_1(\tau,k_2) := \left\lceil
    -
    \frac{\log(\tau) + \alpha_\mathrm{min}(\zeta) \, k_2}
    {\alpha_\mathrm{max}(\zeta)}
\right\rceil
.
\]
We then compute
\begin{align*}
    A
    &=
    \sum_{k_2 = 0}^{K_2(\tau)-1} \exp\bigl(-\alpha_\mathrm{min}(\zeta) \, k_2\bigr)
    \sum_{k_1 = K_1(\tau,k_2)}^\infty \exp\bigl(-\alpha_\mathrm{max}(\zeta) \, k_1\bigr)
    \\&\leq
    \sum_{k_2 = 0}^{K_2(\tau)-1} \exp\bigl(-\alpha_\mathrm{min}(\zeta) \, k_2\bigr)
    \,
    \frac{
        \tau \,
        \exp\bigl(\alpha_\mathrm{min}(\zeta) \, k_2\bigr)
    }{
        1 - \exp\bigl(-\alpha_\mathrm{max}(\zeta)\bigr)
    }
    \\&=
    \frac{K_2(\tau)}{1 - \exp\bigl(-\alpha_\mathrm{max}(\zeta)\bigr)} \, \tau
    \\&=
    \mathcal{O}\bigg(\alpha_\mathrm{diag}(\zeta)^{-1} \, \alpha_\mathrm{anti}(\zeta)^{-1} \, \tau \, \log(\tau)\bigg)
    \intertext{and}
    B
    &=
    \sum_{k_2 = K_2(\tau)}^{\infty} \exp\bigl(-\alpha_\mathrm{min}(\zeta) \, k_2 \bigr)
    \sum_{k_1 = k_2}^\infty \exp\bigl(-\alpha_\mathrm{max}(\zeta) \, k_1 \bigr)
    \\&=
    \sum_{k_2 = K_2(\tau)}^{\infty} \exp\bigl(-\alpha_\mathrm{diag}(\zeta) \, k_2 \bigr) \,
    \frac{1}{1 - \exp\bigl(-\alpha_\mathrm{max}(\zeta)\bigr)}
    \\&\leq
    \frac{\tau}{1 - \exp\bigl(-\alpha_\mathrm{diag}(\zeta)\bigr)}
    \,
    \frac{1}{1 - \exp\bigl(-\alpha_\mathrm{max}(\zeta)\bigr)}
    \\&=
    \mathcal{O}\bigg(\alpha_\mathrm{diag}(\zeta)^{-1} \, \alpha_\mathrm{anti}(\zeta)^{-1} \, \tau \bigg)
    ,
\end{align*}
where in the last steps for both terms we used that $\alpha_\mathrm{diag}(\zeta) = \mathcal{O}\bigl(\alpha_\mathrm{anti}(\zeta)\bigr)$ (cf.\ Theorem \ref{thm:conductivity_coeffs_bound}) and hence $\alpha_\mathrm{max}(\zeta) = \Theta\bigl(\alpha_\mathrm{anti}(\zeta)\bigr)$.

\subsection{Inverse of $\varepsilon = \tau \, |\log(\tau)|$}\label{ssec:tlogt_inverse}

This subsection establishes the following result.

\begin{theorem}
    Let $\varepsilon,\tau \in (0,\infty)$ be such that $\varepsilon = \tau \, |\log\tau|$.
    It then holds
    \[
    \tau = \frac{\varepsilon}{|\log\varepsilon|} \, \bigl(1 + o(1)\bigr)
    \qquad
    \text{for}
    \qquad
    \varepsilon \to 0
    .
    \]
\end{theorem}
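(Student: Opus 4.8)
The plan is to invert the relation $\varepsilon = \tau |\log\tau|$ asymptotically by a bootstrap (successive substitution) argument. First I would note that for $\tau \to 0^+$ we have $|\log\tau| = -\log\tau \to +\infty$, and since $\varepsilon = \tau|\log\tau|$ with $\tau$ decreasing to $0$ faster than $|\log\tau|$ grows, $\varepsilon \to 0^+$ as well; moreover the map $\tau \mapsto \tau|\log\tau|$ is strictly increasing near $0$, so the inverse is well-defined. The key identity to exploit is obtained by taking logarithms: $\log\varepsilon = \log\tau + \log|\log\tau|$, hence
\[
\log\tau = \log\varepsilon - \log|\log\tau|
\qquad\text{and so}\qquad
|\log\tau| = |\log\varepsilon|\Bigl(1 - \tfrac{\log|\log\tau|}{\log\varepsilon}\Bigr).
\]

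Next I would establish the crude bound $|\log\tau| \sim |\log\varepsilon|$ as $\varepsilon \to 0$. Since $\tau \le \varepsilon$ for $\tau$ small (because $|\log\tau| \ge 1$), we get $|\log\tau| \ge |\log\varepsilon|$; conversely $\tau \ge \varepsilon$ is false but one has, say, $\tau \ge \varepsilon^{2}$ eventually (because $|\log\tau| \le |\log\tau|^{2}$-type slack, more carefully: from $\varepsilon = \tau|\log\tau|$ and $|\log\tau| \le \tau^{-1/2}$ for small $\tau$ we get $\varepsilon \le \tau^{1/2}$, i.e. $\tau \ge \varepsilon^{2}$), so $|\log\tau| \le 2|\log\varepsilon|$. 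Together these give $|\log\tau| = |\log\varepsilon|(1+O(1))$ with the ratio bounded between $1$ and $2$, and then $\log|\log\tau| = \log|\log\varepsilon| + O(1)$. Substituting this into the displayed identity yields
\[
|\log\tau| = |\log\varepsilon|\Bigl(1 - \tfrac{\log|\log\varepsilon| + O(1)}{\log\varepsilon}\Bigr) = |\log\varepsilon|\bigl(1 + o(1)\bigr),
\]
since $\log|\log\varepsilon| / |\log\varepsilon| \to 0$.

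Finally I would conclude by dividing the original relation: $\tau = \varepsilon / |\log\tau| = \varepsilon / \bigl(|\log\varepsilon|(1+o(1))\bigr) = \tfrac{\varepsilon}{|\log\varepsilon|}(1+o(1))$, which is the claim.

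The routine obstacle here is simply being careful that all the "$o(1)$" and "$O(1)$" manipulations are valid in the regime $\varepsilon \to 0^+$ (equivalently $\tau\to 0^+$), in particular that the bootstrap closes after one iteration — which it does because the correction term $\log|\log\tau|/|\log\varepsilon|$ is already $o(1)$ once we know $|\log\tau| = \Theta(|\log\varepsilon|)$. There is no genuine analytic difficulty; the only thing to watch is the direction of the inequalities establishing $\varepsilon^{2} \lesssim \tau \le \varepsilon$ so that the two-sided bound on $|\log\tau|/|\log\varepsilon|$ is clean. One could alternatively phrase the whole argument in terms of the Lambert $W$ function, but the elementary bootstrap above is shorter and self-contained.
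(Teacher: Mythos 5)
Your proof is correct and follows essentially the same route as the paper: take logarithms and use that the $\log|\log\tau|$ correction is of lower order, together with monotonicity of $\tau\,|\log\tau|$ to guarantee $\tau \to 0$ as $\varepsilon \to 0$. The paper's version is marginally shorter because it divides to obtain $\tau = \frac{\varepsilon}{|\log\varepsilon|}\,\bigl|1+\tfrac{\log|\log\tau|}{\log\tau}\bigr|$, so the correction factor depends only on $\tau$ and tends to $1$ directly, with no need for your bootstrap bounds $\varepsilon^{2} \lesssim \tau \le \varepsilon$ relating $|\log\tau|$ to $|\log\varepsilon|$.
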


\begin{proof}
    Dividing $\varepsilon = \tau \, |\log\tau|$ by $|\log\varepsilon| = \bigl|\log\tau + \log\log\tau|$, we obtain
    \[
    \frac{\varepsilon}{|\log\varepsilon|}
    =
    \tau \, \frac{1}{\bigl|1 + \frac{\log\log\tau}{|\log\tau|}\bigr|}
    \quad
    \iff
    \qquad
    \tau
    =
    \frac{\varepsilon}{|\log\varepsilon|} \, \bigl|1 + \tfrac{\log\log\tau}{|\log\tau|}\bigr|
    .
    \]
    The claim follows after noting that $\tau \, |\log(\tau)|$ is monotonically increasing in $\tau$ and hence $\tau \to 0$ for $\varepsilon \to 0$.
\end{proof}

\subsection{Proof of Theorem~\ref{thm:pole_expansion}}\label{ssec:pole_expansion_proof}

According to Riemann's removable singularity theorem in higher dimensions (see e.g.\ \cite[Thm.\ 4.2.1]{Sch05}), the function
\begin{equation}\label{eqn:remainder_formula}
    R(E_1,E_2)
    =
    \bigl(E_1 - E_2 + \omega + \iota\eta\bigr) \, F_\zeta(E_1,E_2)
    -
    \frac{1}{\beta} \, \frac{1}{(E_1 - z) \, (E_2 - z)}
\end{equation}
with $z := \frac{\pi\iota}{\beta}$ can be analytically continued to
\[
\mathcal{S}_z :=
\Bigl(\{z\} \times \bigl(\mathbb{C} \setminus \mathcal{S}_{\beta,E_F}\bigr)\Bigr) \cup \Bigl(\bigl(\mathbb{C} \setminus \mathcal{S}_{\beta,E_F}\bigr) \times \{z\}\Bigr)
\]
if $R(E_1,E_2)$ is bounded on this set, or equivalently if
\begin{equation}\label{eqn:residue_removed}
    \lim_{E_1 \to z} (E_1 - z) \, R(E_1,E_2)
    =
    0
\end{equation}
for some arbitrary $E_2 \in \mathbb{C}\setminus\mathcal{S}_{\beta,E_F}$ and likewise with the roles of $E_1$ and $E_2$ interchanged.
In order to verify \eqref{eqn:residue_removed}, we compute
\begin{align}
    \lim_{E_1 \to z} (E_1-z) \, f_\mathrm{temp}(E_1,E_2)
    &=
    \lim_{E_1 \to z} (E_1-z) \, \frac{f_{\beta,E_F}(E_1) - f_{\beta,E_F}(E_2)}{E_1 - E_2}
    \\ &=
    \frac{1}{z - E_2} \,
    \lim_{E_1 \to z} \frac{E_1-z}{1 + \exp\bigl(\beta \, (E_1 - E_F)\bigr)}
    \\&=
    \frac{1}{\beta}
    \,
    \frac{1}{E_2 - z}
    \label{eqn:temperature_residue}
\end{align}
where on the last line we used L'H\^opital's rule to determine the limit.
It follows from \eqref{eqn:temperature_residue} that for $E_1 \to z$, the first and second term in \eqref{eqn:remainder_formula} cancel and hence \eqref{eqn:residue_removed} holds.
The transposed version of \eqref{eqn:residue_removed} follows from the symmetry of \eqref{eqn:remainder_formula}, thus we conclude that $R(E_1,E_2)$ can indeed be analytically continued to $\mathcal{S}_z$.
Theorem~\ref{thm:pole_expansion} then follows by rewriting \eqref{eqn:pole_expansion} in the form \eqref{eqn:remainder_formula} and applying the above argument to each of the terms in the sum over $Z_k$.

\bibliographystyle{abbrv}
\bibliography{dos}

\end{document}